\newtheorem{theorem}{Theorem}
\newtheorem{lemma}[theorem]{Lemma}
\newtheorem{corollary}[theorem]{Corollary}
\theoremstyle{remark}
\newtheorem{remark}[theorem]{Remark}
\newcommand\ol[1]{\mathchoice
{\vphantom{\bar{#1}}\rlap{$\mkern1.5mu\overline{\phantom{\raise-0.5pt\hbox{$#1\mkern-2mu$}}}$}#1}
{\vphantom{\bar{#1}}\rlap{$\mkern1.5mu\overline{\phantom{\raise-0.5pt\hbox{$#1\mkern-2mu$}}}$}#1}
{\vphantom{\bar{#1}}\rlap{$\scriptstyle\mkern1.5mu\overline{\phantom{\raise-0.5pt\hbox{$\scriptstyle #1\mkern-2mu$}}}$}#1}
{\vphantom{\bar{#1}}\rlap{$\scriptscriptstyle\mkern1.5mu\overline{\phantom{\raise-0.5pt\hbox{$\scriptscriptstyle#1\mkern-2mu$}}}$}#1}{}}
\newcommand\N{\mathbb{N}}
\newcommand\Z{\mathbb{Z}}
\newcommand\R{\mathbb{R}}
\newcommand\one{\mathbbm{1}}
\newcommand\eps{\varepsilon}
\newcommand\E{\mathbb{E}}
\newcommand\Prb{\mathbb{P}}
\newcommand\Var{\mathrm{Var}}
\newcommand\Cov{\mathrm{Cov}}
\newcommand\Bin{\mathrm{Bin}}
\newcommand\Tr{\mathrm{Tr}}
\newcommand\Nor{N}
\newcommand\bk{\ol{k}}
\newcommand\bK{\ol{K}}
\newcommand\bS{\ol{S}}
\newcommand\bW{\ol{W}}
\newcommand\bw{\ol{w}}
\newcommand\bv{\ol{v}}
\newcommand\bx{\ol{x}}
\newcommand\by{\ol{y}}
\newcommand\bz{\ol{z}}
\newcommand\vW{\boldsymbol{W}}
\newcommand\vw{\boldsymbol{w}}
\newcommand\vZ{\boldsymbol{Z}}
\newcommand\cF{\mathcal{F}}
\newcommand\cI{\mathcal{I}}
\newcommand\cT{\mathcal{T}}
\newcommand\wvhp{w.v.h.p.}
\title{A local limit theorem for the edge counts of random induced subgraphs of a random graph}
\author{Paul Balister\footnote{Mathematical Institute, University of Oxford, Oxford OX2\thinspace6GG, United Kingdom
\texttt{\{balister,powierski,scott,jane.tan\}@maths.ox.ac.uk}}~\footnote{Research supported by EPSRC grant EP/W015404/1.} \;
Emil Powierski\protect\footnotemark[1] \;
Alex Scott\protect\footnotemark[1]~\footnote{Research supported by EPSRC grant EP/X013642/1.\newline
For the purpose of Open Access, the authors have applied a CC BY public copyright licence to any Author Accepted Manuscript (AAM) version arising from this submission.} \;
Jane Tan\protect\footnotemark[1]}
\date{}
\begin{document}

\maketitle

\begin{abstract}
Consider a `dense' Erd\H{o}s--R\'enyi random graph model $G=G_{n,M}$ with $n$ vertices and $M$ edges, where we assume the
edge density $M/\binom{n}{2}$ is bounded away from 0 and~1. Fix $k=k(n)$ with $k/n$ bounded away from 0 and~1, and let $S$
be a random subset of size $k$ of the vertices of~$G$. We show that with probability $1-\exp(-n^{\Omega(1)})$,
$G$ satisfies both a central limit theorem and a local limit theorem for the empirical distribution of the edge count
$e(G[S])$ of the subgraph of $G$ induced by~$S$, where the distribution is over uniform random choices of the $k$-set~$S$.
\end{abstract}

\section{Introduction}\label{sec:intro}
Consider an Erd\H{o}s-R\'enyi random graph model $G=G_{n,M}$ with $n$ vertices and $M$ edges.
Set $N:=\binom{n}{2}$ and assume $M/N\in[\delta,1-\delta]$ for some fixed $\delta>0$, so that $M/N$
is bounded away from both 0 and 1 and both $G$ and its complement are dense.
Fix $k=k(n)$ and assume $k/n\in[\delta,1-\delta]$ is also bounded away from 0 and~1.
Letting $S$ be a uniformly chosen random subset of the vertices
of $G$ of size~$k$, we will study the edge count $e(S):=e(G[S])$ of the subgraph of $G$ induced by~$S$.
Throughout, we shall always assume that the number of vertices $n$ is sufficiently large, and $M$,
$k$ and $S$ are defined as above unless otherwise indicated within the scope of specific statements.
We say that an event $E$ happens in $G_{n,M}$ \emph{with very high probability} (w.v.h.p.)
if there exists $\eps>0$ such that, for sufficiently large~$n$, we have $\Prb(E) \ge 1-\exp(-n^\eps)$.
In this paper, we show that with very high probability (in choice of~$G$), the empirical
statistics of $e(S)$ satisfy both a central limit theorem and a local limit theorem. 

We will be considering probabilities in various probability spaces. Notably, this will involve probabilities
with respect to the random graph models $G_{n,p}$ or $G_{n,M}$, as well as probabilities with respect to the
random vertex set $S$ that we choose. Where helpful, we will use subscripts such as $\Prb_G$ for probabilities
over the choice of the random graph, and $\Prb_S$ for probabilities over the choice of $S$ conditioned on a
fixed choice of random graph~$G$.

By using a multidimensional version of Stein's method as described in~\cite{RR}, we first deduce in \cref{sec:clt} a central
limit theorem for $e(S)$. Define 
\begin{equation}\label{eq:lambda}
 \lambda:=\frac{(n^2-k^2)k^2}{2n^4}\cdot \frac{M(N-M)}{N^2}.
\end{equation}
and note that $\lambda=\Theta(1)$. Set $K:=\binom{k}{2}$. 

\begin{restatable}{theorem}{clt}\label{thm:clt}
 Assume that for some fixed $\delta>0$, $M/N,k/n\in[\delta,1-\delta]$.
 Then for any $\eps>0$ and w.v.h.p.\ in $G=G_{n,M}$ we have for all\/ $z\in\R$,
 \begin{equation}\label{eq:clt}
  \big|\Prb_S(e(S)\le z)-\Prb(Z\le z)\big|\le n^{-1/4+\eps},
 \end{equation}
 where $S \subseteq V(G)$ is a uniformly chosen random subset of size $k$ and\/ 
 $Z\sim \Nor(KM/N,\lambda n^2)$ is a normal random variable with mean $KM/N$ and
 variance $\lambda n^2$ with\/ $\lambda$ given by~\eqref{eq:lambda}.
\end{restatable}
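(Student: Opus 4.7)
The plan is to apply the exchangeable-pairs multivariate Stein's method of~\cite{RR}. The natural univariate attempt---take $W=e(S)-\E[e(S)]$ together with the swap exchangeable pair $(S,S')$ obtained by exchanging a uniformly random $I\in S$ with a uniformly random $J\notin S$---fails, because a direct computation of $\E[e(S')-e(S)\mid S]$ yields a linear combination of $e(S)$ \emph{and} the within-$S$ degree sum $\sum_{i\in S}d_i$, not of $e(S)$ alone. This forces a bivariate setup.

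I would therefore introduce the auxiliary centred statistic $D(S):=\sum_{i\in S}d_i-2kM/n$ and work with the vector $W=(e(S)-\E[e(S)],\,D(S))^{T}$. Writing $a_{ij}:=\one[ij\in E(G)]$, one has $e(S')-e(S)=\sum_{j\in S\setminus\{I\}}(a_{Jj}-a_{Ij})$ and $D(S')-D(S)=d_J-d_I$, and a short calculation should yield an \emph{exact} linearity condition $\E[W'-W\mid S]=-\Lambda W$ for an explicit deterministic $2{\times}2$ matrix $\Lambda$ depending only on $n,k,M$. Next I would compute the target covariance $\Sigma:=\Cov_S(W)$ via standard hypergeometric identities, check that its $(1,1)$ entry matches $\lambda n^2$ (confirming the variance appearing in the statement), and then compute the conditional step covariance $\E[(W'-W)(W'-W)^{T}\mid S]$ and compare it to $2\Lambda\Sigma$, which is the target appearing in the~\cite{RR} bound. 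The discrepancy is a sum of elementary graph statistics of $G$---entries such as $\sum_i d_i^2$, the number of paths of length two, and the number of triangles---combined with hypergeometric weights depending on how many vertices of a small prescribed set lie in $S$.

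The remaining work, and the main obstacle, is then twofold. First, for each graph statistic that appears in the discrepancy I would need to show that, w.v.h.p.\ over $G=G_{n,M}$, it concentrates around its expectation to within a $n^{-\Omega(1)}$ relative error; this is standard in spirit but requires several concentration inequalities adapted to $G_{n,M}$ (Chernoff bounds or a transfer from $G_{n,p}$ by conditioning on the edge count), together with a union bound over the $O(1)$ many statistics that enter. Second, I would feed these estimates into the quantitative multivariate normal-approximation bound from~\cite{RR} and extract a Kolmogorov rate for the first marginal of $W$. The true difficulty lives in this second step: the target rate $n^{-1/4+\eps}$ is the product of balancing an $L^2$-error in the conditional covariance against a third-moment term from the single-vertex swap (whose increment is of order $n$ while the target standard deviation is also of order $n$), so that the concentration rate $n^{-1/2}$ for the underlying graph statistics turns into an exponent $\tfrac14$ after taking square roots in the~\cite{RR} estimate. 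Pushing this through with the right exponent, while keeping the failure probability at the w.v.h.p.\ level $\exp(-n^{\Omega(1)})$, is the main technical point.
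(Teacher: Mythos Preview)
Your approach is essentially the paper's, up to a linear change of coordinates: since $\sum_{i\in S}d_i=2e(S)+e(S,\bS)=e(S)-e(\bS)+M$, your second coordinate $D(S)$ is an affine function of $(e(S),e(\bS))$, which is exactly the pair the paper tracks. With either choice the exchangeable-pair linearity condition holds exactly, and the Reinert--R\"ollin machinery goes through the same way; your $\Lambda$ is upper triangular, which is a minor convenience.

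One point to correct in your outline: the $(1,1)$ entry of $\Sigma=\Cov_S(W)$ does \emph{not} equal $\lambda n^2$ deterministically. A direct hypergeometric computation gives $\Var_S(e(S))$ in terms of $M$, $n$, $k$ and the degree-variance $V=\sum_v d(v)^2-n\,d(G)^2$, which depends on the realisation of $G$. The paper therefore first proves the CLT with the graph-dependent variance $\Sigma_{11}$, and then separately shows that w.v.h.p.\ $V=p(1-p)n^2+O(n^{3/2+\eps})$, whence $\Sigma_{11}=\lambda(1+O(n^{-1/2+\eps}))$; a final half-page argument transfers the CLT from $\Nor(0,\Sigma_{11})$ to $\Nor(0,\lambda)$ at no loss in the rate. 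You will need this extra step. (Incidentally, triangles do not enter: the relevant statistics are degree moments and path counts, and the $A$-term is controlled by showing that for a fixed $S$ the conditional second moments concentrate over $G$, then Markov over $S$---rather than by concentrating global graph statistics over $G$ alone.)
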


This central limit theorem serves as a starting point for an iterative smoothing argument that we employ in \cref{sec:llt}
to descend to the following local limit theorem, which is our main result.

\begin{restatable}{theorem}{llt}\label{thm:llt}
 Assume that for some fixed $\delta>0$, $M/N,k/n\in[\delta,1-\delta]$.
 Then for any $\eps>0$ and w.v.h.p.\ in $G=G_{n,M}$ we have that for all integers $z$
 \[
  n\big|\Prb_S(e(S)=z)-\varphi(z)\big| \le n^{-1/14+\eps},
 \]
 where $\varphi$ is the density function of a normal random variable $Z\sim\Nor(KM/N,\lambda n^2)$.
\end{restatable}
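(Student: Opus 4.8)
The plan is to bootstrap from the central limit theorem (\cref{thm:clt}) to pointwise control on the probability mass function via a smoothing/iteration scheme, exploiting that the distribution of $e(S)$ can be ``regularised'' by local moves. The natural way to pass from a CLT to a local limit theorem in a combinatorial setting like this is to find an operation on the random set $S$ that changes $e(S)$ by a small, controlled amount, and use this to show that consecutive point probabilities $\Prb_S(e(S)=z)$ and $\Prb_S(e(S)=z+1)$ cannot differ too much. Concretely, swapping one vertex of $S$ for one vertex outside $S$ changes $e(S)$ by an amount with absolute value at most roughly the maximum degree discrepancy in $G$, which w.v.h.p.\ is $O(\sqrt{n\log n})$ for $G_{n,M}$ in this dense regime; more usefully, one can analyse the conditional distribution of this change given a ``core'' of $S$. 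I would set up a Markov-chain / exchangeability argument: let $S$ be chosen uniformly, reveal all but a few vertices, and study the conditional law of $e(S)$ as the few remaining vertices vary; averaging such conditional laws against the global CLT estimate should yield that the pmf is close to a discretised Gaussian.

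First I would establish an \emph{a priori smoothness estimate}: w.v.h.p.\ in $G$, for all integers $z$,
\[
 \big|\Prb_S(e(S)=z)-\Prb_S(e(S)=z+1)\big| \le n^{-2+o(1)}.
\]
The idea is to write $\Prb_S(e(S)=z)$ by first exposing a random subset $T\subset S$ of size $k - t$ for a suitable $t = n^{\Theta(1)}$, and then noting that, conditioned on $T$ and on the set $U$ of the remaining $t$ chosen vertices being drawn from $V(G)\setminus T$, the count $e(S) = e(G[T]) + e(T,U) + e(G[U])$ is a sum of a term linear in the indicator vector of $U$ plus a lower-order quadratic term. For most $T$ (w.v.h.p.\ over $G$ and with high probability over the exposure), the linear coefficients $\{d_T(v) : v\notin T\}$ — the degrees into $T$ — are spread out enough that the conditional distribution of $e(S)$ is itself smooth at scale $1$; this is a one-dimensional local-CLT-type input for a sum of independent-ish bounded contributions, and it transfers smoothness to the unconditional pmf after averaging over $T$. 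The main technical work here is controlling the exceptional $T$ (those with degenerate degree sequences into $T$), which one does by a concentration argument on $G_{n,M}$, switching to $G_{n,p}$ where convenient as in \cref{sec:clt}.

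Next, I would run the iterative smoothing argument proper. Given the CLT bound \eqref{eq:clt} at scale $n^{-1/4+\eps}$ on the cdf and the smoothness bound above, a standard summation-by-parts / telescoping argument converts cdf-closeness into pmf-closeness at a coarser scale, and then one iterates: at each stage, one re-exposes a fresh random sub-collection of vertices of $S$, uses the current (cruder) pmf bound together with the conditional one-dimensional local limit behaviour to improve it, and controls the accumulated error. Balancing the number of iterations against the per-step loss (the $n^{\Theta(1)}$ block size, the concentration error, and the error from replacing the conditional Gaussian by the target $\varphi$, whose mean and variance must be matched to $KM/N$ and $\lambda n^2$ up to lower order using \eqref{eq:lambda}) yields the final exponent. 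The bookkeeping of these competing polynomial errors is exactly what produces the somewhat unusual $n^{-1/14+\eps}$ bound, so the precise choice of block sizes is dictated by optimising that trade-off.

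The hardest part, I expect, is the conditional one-dimensional local limit estimate used inside each iteration: one needs that for \emph{typical} exposed configurations, the conditional law of $e(S)$ genuinely has Gaussian-like local behaviour and not just a CLT, and this requires either a Fourier-analytic (characteristic function) estimate on the conditional distribution of $e(T,U)+e(G[U])$ — controlling $|\E[e^{i\theta e(S)} \mid T]|$ away from $\theta = 0$ uniformly over good $T$ — or an explicit coupling/swapping argument showing the conditional pmf is nearly flat over windows of length $n^{1-o(1)}$. Establishing such a bound uniformly over the overwhelming majority of exposed configurations, while quantifying and discarding the bad ones, together with tracking how the error compounds over $\Theta(\log n)$ (or $n^{o(1)}$) iterations, is where the real effort lies; the CLT input, the degree concentration for $G_{n,M}$, and the passage between $G_{n,M}$ and $G_{n,p}$ are comparatively routine given \cref{sec:clt}.
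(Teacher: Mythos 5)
Your high-level outline — decompose $S$ into a large core $S'$ of size $k-t$ and a random $t$-set $T$, exploit conditional regularity of the increment $e(S)-e(S')$, iterate across scales, match to $\varphi$ — is indeed the structure of the paper's proof (\cref{lem:BernsteinAppl} through \cref{lem:smoothing}), but two steps are not viable as stated. Your proposed a priori smoothness bound $|\Prb_S(e(S)=z)-\Prb_S(e(S)=z+1)|\le n^{-2+o(1)}$ is both too strong and not derivable by your mechanism. It is too strong because, if it held, a single telescoping step over an interval of length $\ell\approx n^{7/8}$ against the interval CLT (\cref{cor:clt}) would already give $n|\Prb_S(e(S)=z)-\varphi(z)|=O(n^{-1/8+o(1)})$, strictly better than the $n^{-1/14+\eps}$ the theorem asserts; what the paper actually establishes (\cref{lem:AA'} at $a=1$) is a step difference of order $n^{-1-\beta+\eps}$ with $\beta<1/10$. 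It is also not derivable as proposed: the conditional law of $e(S)$ given $S'$ has standard deviation $\Theta(\sqrt{tn})$, so a conditional local-CLT argument can at best produce step-one differences of order $(tn)^{-1}$, which reaches $n^{-2+o(1)}$ only when $t=\Theta(n)$; but then the quadratic piece $e(G[T])$ that you dismiss as ``lower-order'' has standard deviation $\Theta(t)=\Theta(n)$, comparable to the linear piece $\sum_{v\in T}d_{S'}(v)$, and proving a local CLT for $e(G[T])$ over random $t$-sets is the original problem again with $(n,k)$ replaced by $(n-k+t,t)$ — the argument is circular.

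The deeper gap is that the paper's central technical device is absent from your plan. The paper never proves a conditional local CLT for $\sum_{v\in T}d_{S'}(v)+e(G[T])$ in a fixed graph — precisely the step you identify as hardest. Instead it exploits that in $G_{n,p}$, for each fixed $S'\cup T$, the increment $e(S)-e(S')$ is exactly distributed as $Y\sim\Bin\big((k-t)t+\binom{t}{2},p\big)$ (the relevant edge indicators are fresh randomness not appearing in $e(S')$), and $Y$ has elementary local smoothness (\cref{lem:bino}). This exact identity is transferred to a typical fixed $G$ via a Bernstein inequality over a disjoint family $\{T_i\}$ — disjointness makes the increments $e(S'\cup T_i)-e(S')$ independent over the randomness of $G$ — followed by Markov's inequality twice (\cref{lem:BernsteinAppl}, \cref{lem:markov}). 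That comparison against a reference binomial, enabled by the edge-independence of $G_{n,p}$, is what replaces the degree-anticoncentration or characteristic-function estimate you flag as the main obstacle; without something like it, the smoothing iteration has no usable input. You would also then need the interval a priori bound \cref{cor:initialbdd} rather than pointwise smoothness to seed the iteration, and the re-seeding via \cref{cor:generaliter} to keep it going.
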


In particular, this shows that the point probabilities of $e(S)$ are of order $1/n$ around its peak,
i.e., within $O(n)$ of its mean $KM/N$, and the relative error is at most $n^{-1/14+\eps}$ in this range.

Somewhat surprisingly, \cref{thm:clt} and \cref{thm:llt} are the first such results on the empirical
distribution of edge counts in this very natural setting of subgraphs induced by $k$-subsets,
although there is a good deal of related work which we discuss in \cref{sec:relatedwork}.

Results that hold w.v.h.p.\ can in general be easily transferred between the above setting
in $G_{n,M}$ and $G_{n,p}$ for $p\in[\delta,1-\delta]$; given a statement holding in $G_{n,M}$ and 
setting $p=M/N$, there is a polynomial probability (on the order of~$1/n$) that $e(G_{n,p})=M$.
In that case we have $(G_{n,p}\mid e(G_{n,p})=M)\sim G_{n,M}$, so the conclusion of
the smoothing lemma (\cref{lem:smoothing} below) transfers to $G_{n,M}$ again with
very small failure probability. Consequently, it will be convenient to prove most of our lemmas
en route w.v.h.p.\ for $G_{n,p}$.

However, we remark that the edge count of $G_{n,p}$ can typically vary by order $n$ and so $KM/N$ also varies
by order~$n$, which is of the same order as the standard deviation of~$Z$. Hence, we can't just replace $M$
by $p \binom{n}{2}$ and obtain the same results as above when working in~$G_{n,p}$.

We also remark that, due to the exponentially small failure probability, a union bound implies slightly
stronger versions of the above results which say that for fixed~$\delta>0$, w.v.h.p.~$G_{n,M}$ satisfies
the desired properties for all $k,M$ with $k/n,M/N \in [\delta,1-\delta]$. For the most part, it will be
more convenient for us to view $k$ and $M$ as fixed, but analogous union bound arguments will later come
up for other parameters.

\subsection{Discussion}
\label{sec:relatedwork}

Both Theorems \ref{thm:clt} and \ref{thm:llt} are concerned with induced subgraphs of \emph{fixed\/} size.
We can also consider a \emph{random size} model where we take a subgraph induced by a random subset~$S$,
where each vertex independently belongs to $S$ with constant probability~$r$.
In this case, it is straightforward to show that \wvhp\ $e(S)$ satisfies a central limit theorem.
We write $e(S)$ in the form $\sum_{i<j}a_{ij}X_iX_j$,
where $A=(a_{ij})$ is the adjacency matrix and the $X_i$ are independent Bernoulli random variables with mean~$r$.
Then a standard sufficient condition for asymptotic normality is that 
\begin{equation}\label{classical}
 \frac{\Tr(A^4)}{\sigma^4}\to0\qquad\text{ and }\qquad\frac{\max_i\sum_ja_{ij}^2}{\sigma^2}\to0,
\end{equation}
where $\sigma^2$ is the variance of $e(S)$ (see \cite{chatterjee,dejong,hall,rotar1974some}
for more details, including error bounds).
This is immediate in the random size model, as $\sigma^2$ is of order $n^3$ \wvhp\ (driven by copies of $K_{1,2}$),
while $\Tr(A^4)=O(n^4)$ and $\max_i\sum_ja_{ij}^2=\Delta(G)\le n$.
In fact, this holds for any sequence of graphs with density bounded away from 0 (as these contain
quadratically many copies of $K_{1,2}$).\footnote{In our case, where the size of $S$ is fixed,
\eqref{classical} does not apply, as the $X_i$ are not independent
(and, in any case, the first expression in \eqref{classical} does not tend to~0, as the variance is much smaller,
having order~$n^2$).} 
However, we cannot hope for a local limit theorem when
the size of $S$ varies. Indeed, a change of size of $S$ of just 1 will move the mean in Theorem~\ref{thm:llt}
by order~$n$, which is the same order as the standard deviation for fixed~$|S|$. Thus the distribution
will approach a rapidly varying function which is the sum of many small Gaussians (see~\cite{kwan}). 

There has been previous work on monochromatic edges in random $c$-colourings of random graphs: 
for a fixed~$c$, consider a function $\phi\colon V(G)\to[c]$ chosen uniformly at random and
let $Y=Y_G(\phi)$ be the number of monochromatic edges. For $G\in G_{n,p}$, the variance of $Y$
is typically of order $n^2$ (as copies of $K_{1,2}$ no longer contribute anything),
and so \eqref{classical} is no longer satisfied. In fact, the distribution of $Y$ is typically not Gaussian
(a Gaussian distribution requires the graph to have very few copies of~$C_4$: see \cite{bhattacharya} for a detailed analysis).  
Note that the colour classes in a random colouring do not have fixed size, so we are in a random size model.
However, while we do not pursue this further here, our methods should be applicable with fixed sizes of colour classes.
For example, with $c=2$ and colour classes of equal size, we would be considering a random bisection. 

A broader class of examples is given by Ramsey graphs. A graph with $n$ vertices is \emph{$C$-Ramsey}
if it has no clique or independent set of size $C\log_2n$.  It follows from a result of Erd\H os and
Szemer\'edi~$\cite{esz}$ that, for large~$n$, $C$-Ramsey graphs have densities bounded away from 0 and~1;
thus by~\eqref{classical}, in the `random size' model, $e(S)$ satisfies a central limit theorem.
However, the local picture is more subtle.  The first question here is whether there are \emph{any}
induced subgraphs with the sizes we require. For random graphs, this was shown by Calkin, Frieze and
McKay~\cite{calkin1992subgraph}, who proved that for $p$ fixed, with high probability a graph in $G_{n,p}$
contains induced subgraphs of all sizes between $0$ and $e(G)-cn^{3/2}/\sqrt{\log n}$. For Ramsey graphs
this is much harder: Erd\H os and McKay (see~\cite{efav}) conjectured that there are induced subgraphs
of all sizes up to~$cn^2$. After a long line of subsequent work, a much stronger statement was proved
by Kwan, Sah, Sauermann and Sawhney~\cite{kwan}: for every $\eps,C>0$, every $C$-Ramsey graph with
sufficiently many vertices has induced subgraphs of all sizes up to $(1-\eps)e(G)$.  But what about the
distribution of random subgraphs? Kwan, Sah, Sauermann and Sawhney showed further that, for any $C>0$,
and $r=r(n)$ bounded away from $0$ and~$1$, in the random size model we have 
\begin{equation}\label{ubr}
 \Prb[e(S)=x]\le A/\sigma,
\end{equation}
while for all $x$ within a bounded number of standard deviations of $\E e(S)$,
\[
 \Prb[e(S)=x]\ge a/\sigma,
\]
where $a$ and $A$ are positive constants. However, as noted in \cite{kwan}, a local limit need not hold in this setting.

Finally, we note that it would be interesting to prove analogous results for sparse graphs,
and for the distribution of other subgraphs (for example, the number of triangles in a random induced subgraph).

\section{The central limit theorem}\label{sec:clt}

The proof of our main central limit theorem is given in two parts. The bulk of the work occurs
in the first, where we derive an initial central limit theorem using Stein's method.
In the second part, we reformulate this statement to obtain \cref{thm:clt}.

It will be helpful to fix some notation that we will use throughout the section.
We will be working with a random graph $G=G_{n,M}$ with vertex set $V=V(G)$.
For a set $S\subseteq V$ of size $k$, we write $\ol{S}:=V\setminus S$ and $\ol{k}:=n-k$.
We also write $e(S)$ for the number of edges in the induced subgraph $G[S]$.
For a vertex $x\in V$, we will write $d_S(x)$ for the number of edges between $x$ and~$S$.
We also write $d(G)$ for the average degree of $G$. 

\subsection{Stein's method}

We will be using Stein's method of exchangeable pairs where we modify $S$ to a new set $S'$
by removing a random element of $S$ and adding a random element of~$\bS$. Unfortunately,
the expected change $e(S')-e(S)$ depends not just on $e(S)$ but also on the number of
edges between $S$ and~$\bS$, or equivalently, given that $e(G)$ is fixed, on $e(\bS)$.
Thus it will be necessary to track both $e(S)$ and $e(\bS)$. For this we will need
a version of Stein's method that can be applied to the 2-dimensional vector $(e(S),e(\bS))$.

We will use the following $d$-dimensional version of Stein's method which occurs as a special case of a
result of Reinert and R\"ollin \cite[Corollary 3.1]{RR}. A pair of random variables $(\vW,\vW')$
is called \emph{exchangeable} if $(\vW,\vW')$ and $(\vW',\vW)$ have the same distribution.

\begin{theorem}\label{thm:Stein}
 Suppose that\/ $(\vW,\vW')$ is an exchangeable pair of\/ $\R^d$-valued random column vectors
 $\vW=(W_1,\dots,W_d)^T$ and\/ $\vW'=(W'_1,\dots,W'_d)^T$ such that 
 \begin{equation}\label{eq:steinc1}
  \E[\vW]=0\quad\text{and}\quad \E[\vW\vW^T]=\Sigma, 
 \end{equation} 
 where $\Sigma$ is a symmetric positive definite $d\times d$ real matrix.
 Suppose further that
 \begin{equation}\label{eq:SteinW'-W}
  \E\big[\vW'-\vW\mid\vW\big]=-\Lambda \vW 
 \end{equation}
 for an invertible $d\times d$ real matrix $\Lambda$. 
 For $i=1, \dots, d$, let\/ $\lambda^{(i)}=\sum_{m=1}^d |(\Sigma^{-1/2}\Lambda^{-1}\Sigma^{1/2})_{m,i}|$,
 \begin{align*}
  A&=\sum_{i,j} \lambda^{(i)}
  \Big(\Var\,\E\Big[\sum_{k,\ell}\Sigma_{ik}^{-1/2}\Sigma_{j\ell}^{-1/2}(W'_k-W_k)(W'_\ell-W_\ell)\Big|\vW\Big]\Big)^{1/2},\\
  B&=\sum_{i,j,k} \lambda^{(i)}\,\E\,\Big|\sum_{r,s,t}\Sigma_{ir}^{-1/2}\Sigma_{js}^{-1/2}\Sigma_{kt}^{-1/2}
  (W'_r-W_r)(W'_s-W_s)(W'_t-W_t)\Big|,\text{ and}\\
  T&=\tfrac{1}{4d}\Big(\tfrac{A}{2} + \sqrt{\sqrt d B+\tfrac{A^2}{4}}\Big)^2.
 \end{align*}
 Then, with $Z'\sim \Nor(0,1)$ a standard normal random variable, we have 
 \[
  \big|\Prb(W_1\le \sqrt{\Sigma_{11}} z)-\Prb(Z'\le z)\big|
  \le\gamma^2\big(-\tfrac{A}{2}\log T+\tfrac{B}{2\sqrt{T}}+2\sqrt{dT}\big),
 \]
 where $\gamma=\gamma(d)$ is a function of\/ $d$ only.
\end{theorem}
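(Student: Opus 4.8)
\cref{thm:Stein} is the special case of \cite[Corollary~3.1]{RR} in which the linearity condition holds \emph{exactly}. Reinert and R\"ollin work with the weaker hypothesis $\E[\vW'-\vW\mid\vW]=-\Lambda\vW+R$ for a random remainder $R$, and their estimate carries additional terms controlled by moments of $R$; the plan is simply to recall the shape of their bound and observe that, under \eqref{eq:SteinW'-W}, one has $R\equiv 0$, so every such term vanishes and what remains is precisely the stated inequality. In other words, the only thing that needs re-deriving is the \emph{structure} of the Reinert--R\"ollin bound, not any new probabilistic estimate.

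To recall that structure, I would argue as follows. For a smooth test function $h$, let $f=f_h$ solve the multivariate Stein equation for the standard Gaussian, $\Delta f(w)-\langle w,\nabla f(w)\rangle=h(w)-\E h(Z')$, with the usual bounds on the derivatives of $f$ in terms of those of $h$. Exchangeability of $(\vW,\vW')$ gives $\E[F(\vW,\vW')]=0$ for every antisymmetric $F$; choosing $F$ built from $\nabla f$ and Taylor-expanding $\nabla f(\vW')$ about $\vW$ to second order converts this identity into $\E h(\vW)-\E h(Z')$ plus two correction terms, \emph{once} the first-order contribution $\E[(\vW'-\vW)\mid\vW]$ is replaced by $-\Lambda\vW$ via \eqref{eq:SteinW'-W} --- inverting $\Lambda$ is what lets this contribution be matched against the drift term $\langle w,\nabla f(w)\rangle$. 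Running the computation for $\Sigma^{-1/2}\vW$ (whose covariance is the identity) rather than $\vW$ produces the factors $\Sigma^{-1/2}$ attached to every difference $W'_k-W_k$, the weights $\lambda^{(i)}=\sum_m|(\Sigma^{-1/2}\Lambda^{-1}\Sigma^{1/2})_{m,i}|$ (the $\ell^1$ norms of the columns of $(\Sigma^{-1/2}\Lambda\Sigma^{1/2})^{-1}$), and the split of the Taylor remainder into a second-order piece measured by a variance of a conditional expectation --- the term $A$ --- and a third-order piece measured by an absolute moment --- the term $B$. Positive-definiteness of $\Sigma$ and invertibility of $\Lambda$ are used here precisely to make $\Sigma^{\pm 1/2}$ and $\Lambda^{-1}$ well-defined.

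Finally, \cref{thm:Stein} bounds the Kolmogorov distance of the rescaled first coordinate $W_1/\sqrt{\Sigma_{11}}$ to $\Nor(0,1)$, whereas the step above controls only $|\E h(\vW)-\E h(Z')|$ for smooth $h$. To bridge this, one approximates $\one_{\{x_1\le z\}}$ by a smooth function equal to $1$ on $\{x_1\le z\}$, vanishing on $\{x_1\ge z+\eps\}$, with $\ell$-th derivatives of size $O(\eps^{-\ell})$; the smooth bound then feeds in $A$ and $B$ with powers of $1/\eps$, the cost of replacing the indicator by its smoothing is at most $\Prb(z<Z'_1\le z+\eps)\le\eps/\sqrt{2\pi}$ together with, recursively, a multiple of the quantity being estimated, and solving the resulting recursion while optimising over $\eps$ --- with $T$ in the role of $\eps^2$ --- yields exactly $-\tfrac{A}{2}\log T+\tfrac{B}{2\sqrt T}+2\sqrt{dT}$. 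The logarithm in the $A$-term, in place of a power of $1/\eps$, is the signature of this recursive (Rinott--Rotar type) smoothing, and $\gamma(d)$ absorbs the dimension-dependent losses from the Stein-equation derivative bounds and from the smoothing in $\R^d$.

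I expect the only genuine difficulty to be notational rather than conceptual: one must line up the Reinert--R\"ollin normalisation of $\Sigma$ and their sign convention for $\Lambda$, and --- most importantly --- check that their remainder term is identically \emph{zero} under \eqref{eq:SteinW'-W}, not merely small, so that the general bound really does collapse to the clean form stated here. No new input beyond \cite{RR} is required.
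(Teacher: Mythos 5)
Your proposal is correct and takes essentially the same route as the paper: both reduce \cref{thm:Stein} to \cite[Corollary~3.1]{RR} by noting that the exact linearity condition \eqref{eq:SteinW'-W} corresponds to taking the remainder $R\equiv 0$ (so $C'=0$, $D'=A/2$), and by choosing the test function $h=\one_{(-\infty,\sqrt{\Sigma_{11}}z]\times\R^{d-1}}$, whose indicator-of-a-convex-set form satisfies RR's non-smooth test function hypotheses with constant $a\le 2\sqrt{d}$, so that the RR supremum bounds exactly $\Prb(W_1\le\sqrt{\Sigma_{11}}z)-\Prb(Z'\le z)$. Your second and third paragraphs, sketching how the RR bound itself arises via the Stein equation, Taylor expansion and recursive smoothing, are not needed for this specialisation; the paper simply cites the relevant smoothing constant rather than re-deriving it.
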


\begin{proof}
To deduce this from \cite[Corollary 3.1]{RR}, and following the notation used there, we take the test function
\[
 h(x):=\one_{(-\infty,\sqrt{\Sigma_{11}}z] \times \R^{d-1}}(x),
\]
which is the indicator function of a convex set in~$\R^d$. As discussed in~\cite{RR} (see also~\cite{BG93}),
this $h$ satisfies the criteria for non-smooth test functions with constant $a \le 2\sqrt{d}$. Also, we take
$R=0$ and hence $C'=0$, $D'=A/2$. Letting $\vZ=(Z_1,Z_2)$ be a standard bivariate and $Z'$ a standard univariate normal
random variable, the supremum in \cite[Corollary 3.1]{RR} is an upper bound for the expression
\[
 \E\big[h(\vW)-h(\Sigma^{1/2}\vZ)\big]=\Prb\big(W_1\le\sqrt{\Sigma_{11}}z\big)-\Prb\big(Z'\le z\big),
\]
as $(\Sigma^{1/2}\vZ)_1\sim \sqrt{\Sigma_{11}}Z'$.
\end{proof}

In our case, we will apply \cref{thm:Stein} with $d=2$, and $\vW$ being a normalised version of $(e(S),e(\bS))$.
Note that $\E_S(e(S))=KM/N$ and $\E_S(e(\bS))=\bK M/N$, where $\bK=\binom{\bk}{2}$ and the expectation is over
the choice of~$S$. We define two random variables $W$ and $\bW$ by
\[
 W:=\frac{e(S)-KM/N}{n},\qquad\bW:=\frac{e(\bS)-\bK M/N}{n},
\]
so that $\E[W]=\E[\bW]=0$. Now pick a uniformly chosen vertex $x$ of $S$ and an independent uniformly chosen
vertex $\bx$ of~$\bS$. Swap the vertices $x$ and $\bx$ to produce two new subsets $S':=S\setminus\{x\}\cup\{\bx\}$
and $\bS':=\bS\setminus\{\bx\}\cup\{x\}$ (see \cref{fig:notation}) and define two more random variables $W'$
and $\bW'$ analogously using the subsets $S'$ and~$\bS'$. We note that the random variables $(W,\bW)$ and
$(W',\bW')$ are exchangeable as $S'$ is uniformly distributed over $k$-sets.
Write $\vW$ for the column vector $(W,\bW)^T$ and similarly for~$\vW'$.

\begin{figure}
\centering
\begin{tikzpicture}[scale=0.5]
\tikzstyle{every node}=[draw,shape=circle,minimum size=3.5,inner sep=0]
\draw[rounded corners=7](0,0) -- ++(6,0) -- ++(0,4) -- ++(-6,0) -- cycle;
\draw[rounded corners=7,fill=blue!5!white](0,0) -- ++(2.5,0) -- ++(0,3) -- ++(1,0) -- ++(0,1) -- ++(-3.5,0) -- cycle;
\draw[rounded corners=7](10,0) -- ++(6,0) -- ++(0,4) -- ++(-6,0) -- cycle;
\draw[rounded corners=7,fill=blue!5!white](10,0) -- ++(3.5,0) -- ++(0,1) -- ++(-1,0) -- ++(0,3) -- ++(-2.5,0) -- cycle;
\node[draw=none,fill=none] at (1.2,2) {$S$};
\node[draw=none,fill=none] at (4.5,2) {$\bS$};
\node[draw=none,fill=none] at (2.45,3.5) {$x$};
\node[draw=none,fill=none] at (3.55,0.55) {$\bx$};
\node[fill=black] at (3,3.5) {};
\node[fill=black] at (3,0.5) {};
\node[draw=none,fill=none] at (11.2,2) {$S'$};
\node[draw=none,fill=none] at (14.5,2) {$\bS'$};
\node[draw=none,fill=none] at (13.55,3.5) {$x$};
\node[draw=none,fill=none] at (12.45,0.55) {$\bx$};
\node[fill=black] at (13,3.5) {};
\node[fill=black] at (13,0.5) {};
\node[draw=none,fill=none] at (8,2) {$\longleftrightarrow$};
\end{tikzpicture}
\caption{Vertex interchange in $G$.}
\label{fig:notation}
\end{figure}
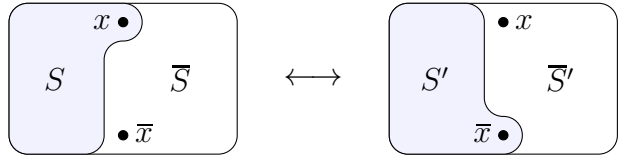

In light of \cref{thm:Stein}, the bulk of the work in this section consists of verifying conditions~\eqref{eq:steinc1}
and \eqref{eq:SteinW'-W}, which we do in \cref{l:eww} and \cref{l:ew} respectively. We will then also show that
$A$ and $B$ are $O(n^{-1/2+\eps})$,
from which a bound of $O(n^{-1/4+\eps})$ will follow. While some of these lemmas are specialised to the random graph
setting in which we will apply them, the first few hold in greater generality.

We begin by verifying~\eqref{eq:SteinW'-W}.  Given $S$, we perform a random exchange of vertices to obtain~$S'$.
The expectation $\E[e(S')-e(S)\mid S]$ is then a linear function of $e(S)$ and $e(S,\bS)$ which,
given that $e(S)+e(\bS)+e(S,\bS)=M$, is also a linear function of $e(S)$ and $e(\bS)$.

\begin{lemma}\label{l:ew}
 For any graph $G$ and\/ $k,\bk>0$ we have
 \[
  \E\big[\vW'-\vW\mid\vW\big]=-\Lambda \vW,\quad\text{where}\qquad
  \Lambda=\frac{1}{k\bk}\begin{pmatrix}n+\bk-1&k-1\\\bk-1&n+k-1\end{pmatrix}.
 \]
\end{lemma}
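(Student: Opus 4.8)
The plan is to compute $\E[W'-W\mid S]$ and $\E[\bW'-\bW\mid S]$ directly by conditioning on the choice of the swapped vertices $x\in S$ and $\bx\in\bS$, and then to re-express the resulting linear functions of $e(S),e(S,\bS),e(\bS)$ purely in terms of $e(S)$ and $e(\bS)$ using the identity $e(S)+e(S,\bS)+e(\bS)=M=e(G)$. First I would note that when we delete $x$ from $S$, the number of edges drops by $d_S(x)$ (the number of neighbours of $x$ inside $S$); when we then insert $\bx$, the number of edges goes up by $d_S(\bx)$, the number of neighbours of $\bx$ in $S\setminus\{x\}$, which equals $d_S(\bx)-\one[x\bx\in E(G)]$ where now $d_S(\bx)$ counts neighbours of $\bx$ in the original set $S$. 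So $e(S')-e(S) = d_S(\bx)-d_S(x)-\one[x\bx\in E(G)]$. Taking the expectation over the uniform independent choices of $x\in S$ and $\bx\in\bS$ gives
\[
 \E\big[e(S')-e(S)\mid S\big] \;=\; \frac{1}{\bk}\sum_{\bx\in\bS} d_S(\bx) \;-\; \frac1k\sum_{x\in S} d_S(x) \;-\; \frac{e(S,\bS)}{k\bk}
 \;=\; \frac{e(S,\bS)}{\bk} - \frac{2e(S)}{k} - \frac{e(S,\bS)}{k\bk},
\]
since $\sum_{\bx\in\bS}d_S(\bx)=e(S,\bS)$ and $\sum_{x\in S}d_S(x)=2e(S)$. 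An entirely symmetric computation (swapping the roles of $S$ and $\bS$, $k$ and $\bk$, and noting $e(\bS',S')$-type bookkeeping) yields $\E[e(\bS')-e(\bS)\mid S] = \frac{e(S,\bS)}{k} - \frac{2e(\bS)}{\bk} - \frac{e(S,\bS)}{k\bk}$.

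Next I would substitute $e(S,\bS) = M - e(S) - e(\bS)$ to turn both right-hand sides into affine functions of $e(S)$ and $e(\bS)$. Writing $e(S)=nW+KM/N$ and $e(\bS)=n\bW+\bK M/N$, dividing through by $n$, the claim is that the constant terms cancel and the linear part is exactly $-\Lambda\vW$ with the stated $\Lambda$. The vanishing of the constant term is equivalent to the statement $\E[\vW'-\vW]=0$, which holds automatically because $(\vW,\vW')$ is exchangeable, so $\E[\vW']=\E[\vW]=0$; alternatively one checks it by hand using $K/N+\bK/N + e(S,\bS)/N$-type identities, but invoking exchangeability is cleaner. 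It then remains only to read off the coefficient matrix: collecting the coefficient of $e(S)$ and of $e(\bS)$ in each of the two expressions above (after the substitution) and dividing by $n$, one gets $-\frac1{k\bk}\begin{pmatrix}n+\bk-1 & k-1\\ \bk-1 & n+k-1\end{pmatrix}$, matching the statement.

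Finally, I would observe that $\E[\vW'-\vW\mid\vW] = \E[\vW'-\vW\mid S]$ in our case: although $\E[\vW'-\vW\mid S]$ a priori depends on $S$ only through $e(S)$ and $e(S,\bS)$, and hence only through $e(S)$ and $e(\bS)$, i.e. only through $\vW$ — so conditioning further on $\vW$ does not change it — the tower property gives the stated identity. The only mild subtlety, and the place to be careful, is the $-\one[x\bx\in E(G)]$ correction term and making sure the double-counting of the single potential edge $x\bx$ is handled consistently in both coordinates (in particular, when $x$ is removed from $S$ and simultaneously $\bx$ removed from $\bS$, the edge $x\bx$ — if present — is counted in $e(S,\bS)$ before the swap and again in $e(S',\bS')$ after, so it contributes to neither $e(S')-e(S)$ nor $e(\bS')-e(\bS)$ beyond the explicit $\one[x\bx\in E(G)]/(k\bk)$ terms). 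Everything else is a routine, purely combinatorial calculation with no probabilistic input beyond uniformity of $x$ and $\bx$, so I expect no real obstacle here; this lemma is a warm-up for the variance computation in \cref{l:eww}.
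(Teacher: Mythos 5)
Your proposal is correct and follows essentially the same route as the paper: fix $S$, average the identity $e(S')-e(S)=d_S(\bx)-d_S(x)-\one_{x\bx}$ over independent uniform $x\in S$, $\bx\in\bS$, substitute $e(S,\bS)=M-e(S)-e(\bS)$, read off the linear coefficients, and kill the constant term by noting $\E[\vW']=\E[\vW]=0$. The only cosmetic difference is that you write out the second-coordinate computation $\E[e(\bS')-e(\bS)\mid S]=\tfrac{e(S,\bS)}{k}-\tfrac{2e(\bS)}{\bk}-\tfrac{e(S,\bS)}{k\bk}$ explicitly where the paper invokes the $S\leftrightarrow\bS$, $k\leftrightarrow\bk$ symmetry; your explicit version and the paper's symmetry appeal agree, and both yield the same $\Lambda$.
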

\begin{proof}
We fix $S$ and take expectations over the choice of $(x,\bx)$. Note that
\begin{equation}\label{eq:de}
 e(S')-e(S)=d_S(\bx)-d_S(x)-\one_{x\bx},
\end{equation}
where $d_S(y)$ is the number of edges from $y$ to $S$ and $\one_{xy}$ represents an indicator function of the
event that $xy\in E(G)$. Now fixing $G$ and $S$ and taking expectations over $(x,\bx)$ we have
\begin{align*}
 \E\big[e(S')-e(S)\mid S\big]&=\E_{\bx}d_S(\bx)-\E_x d_S(x)-\E_{x,\bx}\one_{x\bx}\\
 &=\tfrac{e(S,\bS)}{\bk}-\tfrac{2e(S)}{k}-\tfrac{e(S,\bS)}{k\bk}\\
 &=\tfrac{1}{k\bk}\big((k-1)e(S,\bS)-2\bk e(S)\big)\\
 &=\tfrac{1}{k\bk}\big((k-1)M-(k-1+2\bk)e(S)-(k-1)e(\bS)\big)\\
 &=\tfrac{1}{k\bk}\big((k-1)M-(n-1+\bk)KM/N-(k-1)\bK M/N\big)\\
 &\quad-\tfrac{1}{k\bk}\big((n-1+\bk)nW+(k-1)n\bW\big)\\
 &=-\tfrac{1}{k\bk}\big((n-1+\bk)nW+(k-1)n\bW\big),
\end{align*}
where we have used that $k+\bk=n$ and $\E[e(S')-e(S)]=\E[W]=\E[\bW]=0$, so that the constant term must vanish.
Hence (as $\vW$ is $S$-measurable and the expectation only depends on~$\vW$)
\[
 \E\big[W'-W\mid \vW\big]=-\tfrac{n-1+\bk}{k\bk}W-\tfrac{k-1}{k\bk}\bW.
\]
By symmetry, $\E[\bW'-\bW\mid \vW]=-\frac{n-1+k}{k\bk}\bW-\frac{\bk-1}{k\bk}W$ and the result follows.
\end{proof}

We note for future reference that
\begin{equation}\label{eq:Li}
 \Lambda^{-1}=\frac{k\bk}{2n(n-1)}\begin{pmatrix}n+k-1&1-k\\1-\bk&n+\bk-1\end{pmatrix}
\end{equation}
and the entries of $\Lambda^{-1}$ are all~$O(n)$.

We now turn to verifying~\eqref{eq:steinc1}. We have already noted that $\E \vW=0$,
and must check that $\vW$ has a positive definite covariance matrix $\Sigma$. 

Let $P$ be the number of copies of the path on three vertices in~$G$, so that
\[
 P=\sum_{v\in V(G)} \binom{d(v)}{2}.
\]
Recall that $d(G)=2M/n$ is the average degree of $G$ and let $V$ be $n$ times the variance of the degrees of~$G$, so that
\[
 V=\sum_{v\in V(G)} d(v)^2-nd(G)^2.
\]
We note that
\begin{equation}\label{eq:V}
 2Pn=Vn+4M^2-2Mn.
\end{equation}

\begin{lemma}\label{l:eww}
 For any graph~$G$ and\/ $k,\bk\ge2$, we have $\E[\vW\vW^T]=\Sigma$, where
 \[
  \Sigma:=\frac{2K\bK}{n^2N^2(n-2)(n-3)}
 \begin{pmatrix}M(N-M)+\tfrac{k-2}{\bk-1}NV&M(N-M)-NV\\
  M(N-M)-NV&M(N-M)+\tfrac{\bk-2}{k-1}NV\end{pmatrix}.
 \]
 In particular, $\Sigma$ is symmetric and positive definite if and only if\/ $V>0$.
\end{lemma}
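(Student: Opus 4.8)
The plan is to compute the three entries $\E[W^2]$, $\E[\bW^2]$, and $\E[W\bW]$ directly, by expanding $e(S)$ and $e(\bS)$ as sums of edge indicators and using linearity of expectation over the random $k$-set~$S$. Writing $e(S)=\sum_{\{u,v\}}\one_{uv}\one_{u,v\in S}$ and similarly for~$e(\bS)$, the quantity $\E_S[e(S)^2]$ becomes a sum over ordered pairs of edges $(f,g)$ of $\one_f\one_g$ times the probability that all endpoints involved lie in~$S$; this probability depends only on the number $j\in\{2,3,4\}$ of distinct vertices in $f\cup g$, and equals $\binom{k}{j}/\binom{n}{j}$ (equivalently a ratio of falling factorials). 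So I would partition the pairs of edges according to whether they are equal ($j=2$), share exactly one vertex ($j=3$, counted by~$P$ up to a factor), or are disjoint ($j=4$), and collect: the number of ordered pairs of distinct adjacent edges is $\sum_v d(v)(d(v)-1)=2P-\text{(something)}$ after care, and the number of ordered disjoint edge-pairs is $M(M-1)$ minus the adjacent count. After substituting these counts and the hypergeometric probabilities, $\E_S[e(S)^2]$ is a polynomial in $M$, $P$ (or $V$ via~\eqref{eq:V}), $n$, $k$; subtracting $(KM/N)^2$ and dividing by $n^2$ gives $\Sigma_{11}$. The off-diagonal term $\E[W\bW]$ is handled the same way, now summing over one edge with both endpoints in~$S$ and one edge with both endpoints in~$\bS$: only $j=3$ (sharing a vertex is impossible here) and $j=4$ cases arise, with probability $\binom{k}{2}\binom{\bk}{2}/\big(\binom{n}{2}\binom{n-2}{2}\big)$-type weights, and the bookkeeping is slightly simpler. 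By the $k\leftrightarrow\bk$ symmetry of the setup, $\Sigma_{22}$ is obtained from $\Sigma_{11}$ by swapping $k$ and $\bk$, which is consistent with the claimed matrix.

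A useful sanity check at the end of the computation: since $e(S)+e(\bS)+e(S,\bS)=M$ is constant, we have $W+\bW = -(e(S,\bS)-\E e(S,\bS))/n$, and more to the point the combination $e(S)+e(\bS)$ has a variance that can be computed independently, giving a linear relation among $\Sigma_{11}$, $\Sigma_{22}$, $\Sigma_{12}$ that the claimed formula must satisfy; verifying this both guards against algebra slips and explains why the two off-diagonal entries coincide and why $M(N-M)$ appears identically in every entry (the ``$NV$'' terms are exactly the corrections that distinguish the three entries). For the positive-definiteness claim, note the scalar prefactor $2K\bK/(n^2N^2(n-2)(n-3))$ is strictly positive for $k,\bk\ge 2$, so the signature is that of $\left(\begin{smallmatrix} M(N-M)+\frac{k-2}{\bk-1}NV & M(N-M)-NV\\ M(N-M)-NV & M(N-M)+\frac{\bk-2}{k-1}NV\end{smallmatrix}\right)$. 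Its trace is $2M(N-M)+NV\big(\frac{k-2}{\bk-1}+\frac{\bk-2}{k-1}\big)\ge 0$, and a direct expansion of the determinant should simplify to a positive multiple of $M(N-M)\cdot NV$ (all the pure $M(N-M)^2$ terms cancelling), so the determinant is positive iff $V>0$ and zero iff $V=0$; combined with the nonnegative trace this gives ``positive definite iff $V>0$'' as stated. When $V=0$ the matrix is rank one (all entries equal $M(N-M)$ times the prefactor), consistent with the degenerate case where $G$ is regular and $e(S)$, $e(\bS)$ become affine functions of a single hypergeometric-type variable.

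The main obstacle is purely the combinatorial bookkeeping in $\E_S[e(S)^2]$: correctly enumerating ordered pairs of edges by their overlap pattern, keeping track of the difference between $P=\sum_v\binom{d(v)}{2}$ and $\sum_v d(v)(d(v)-1)=2P$, and not dropping lower-order correction terms when passing from $\binom{k}{j}/\binom{n}{j}$ to the $\frac{k-2}{\bk-1}$-type coefficients (these coefficients come from ratios like $\frac{(k-2)(k-3)}{(n-2)(n-3)}$ versus $\frac{(k-2)}{(n-2)}$, and the asymmetry between numerator degrees is exactly what produces the $\frac{k-2}{\bk-1}$ in $\Sigma_{11}$). Once the counts are assembled, rewriting $P$ in terms of $V$ via~\eqref{eq:V} and collecting into the stated form is routine algebra, and I would present it as such rather than grinding through every cancellation.
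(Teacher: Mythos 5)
The computation of $\Sigma$ follows essentially the same route as the paper: write $\E[e(S)^2]$, $\E[e(\bS)^2]$, $\E[e(S)e(\bS)]$ as sums over ordered pairs of edges, classify pairs by the number of shared vertices so the relevant hypergeometric probability (a ratio of falling factorials) depends only on the overlap type, count the types via $M$, $2P$, $M^2-M-2P$, and trade $P$ for $V$ via~\eqref{eq:V}. Your sanity checks (the $k\leftrightarrow\bk$ symmetry, and the linear relation coming from $e(S)+e(\bS)+e(S,\bS)=M$) are consistent with the result. This part is fine.

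Your argument for the positive-definiteness claim, however, takes a different route from the paper's (which shows that degeneracy of $\Sigma$ would force a linear relation $\alpha e(S)+\beta e(\bS)=c$ over all $S$, and then argues via vertex swaps that $G$ must be regular, hence $V=0$), and your route has a genuine gap. You claim the determinant of the bracketed matrix ``should simplify to a positive multiple of $M(N-M)\cdot NV$ (all the pure $M(N-M)^2$ terms cancelling).'' The $M(N-M)^2$ terms do cancel, but the $(NV)^2$ terms do not. Writing $a:=M(N-M)$ and $u:=NV$, the bracketed determinant is
\[
 au\Big(\tfrac{k-2}{\bk-1}+\tfrac{\bk-2}{k-1}+2\Big)+u^2\Big(\tfrac{(k-2)(\bk-2)}{(k-1)(\bk-1)}-1\Big)
 =\frac{(n-3)\,u}{(k-1)(\bk-1)}\big[(n-2)a-u\big],
\]
and the coefficient of $u^2$ is $-\tfrac{n-3}{(k-1)(\bk-1)}<0$. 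So concluding positivity from the determinant additionally requires the graph inequality $(n-2)M(N-M)>NV$, which is not an algebraic cancellation and does not follow from $V>0$ alone: for the star $G=K_{1,n-1}$ one has $(n-2)M(N-M)=NV$ exactly, and indeed $(e(S),e(\bS))$ takes only two values there, so $\Sigma$ is singular even though $V>0$. Your determinant computation, once done correctly, therefore does not close the ``if and only if''; it instead reveals that the sign of $\det\Sigma$ is governed by $(n-2)M(N-M)-NV$ rather than by $V$ alone, and that some extra hypothesis or a separate inequality would be needed. (This is worth flagging in any case, since the star example also slips past the ``$S\cap N(\bx)$ is independent of $\bx$'' step in the paper's combinatorial proof, where the possibility $S\subseteq N(\bx)$, $S\cap N(\by)=\emptyset$ is not excluded. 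None of this affects the paper's application, which only needs $\Sigma$ to be well-conditioned for typical $G_{n,p}$.)
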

\begin{proof}
We first note that
\[
 \E\big[W^2\big]=\frac{1}{n^2}\Var(e(S))
 =\frac{1}{n^2}\sum_{e,f\in E(G)}\left(\Prb(e,f\subseteq S)-\Prb(e\subseteq S)\Prb(f\subseteq S)\right),
\]
where the sum is over ordered pairs $(e,f)$ of edges of $G$. We can classify these pairs
according to the size of $e\cap f$. Namely, there are $M$ pairs where $e=f$, $2P$ pairs where $|e\cap f|=1$
and $M^2-M-2P$ pairs where $e\cap f=\emptyset$. From this we deduce that
\[
 \E\big[W^2\big]=\frac{1}{n^2}\bigg(\frac{(k)_2}{(n)_2}M
 +\frac{(k)_3}{(n)_3}2P+\frac{(k)_4}{(n)_4}(M^2-M-2P)
 -\frac{(k)_2^2}{(n)_2^2}M^2\bigg),
\]
where $(x)_{r}=x(x-1)\dots(x-r+1)$ denotes the falling factorial. Note that $k,\bk\ge2$ implies $n\ge4$, so $(n)_4>0$.
Using \eqref{eq:V} to rewrite $2P$ in terms of $M$, $n$ and $V$ and determining the coefficients of the
terms containing $M$, $M^2$ and~$V$, respectively, gives
\[
 \E\big[W^2\big]=\frac{2\bk K}{n^2N(n)_4}\big(M(N-M)(\bk-1)+(k-2)NV\big),
\]
as well as the symmetric expression
\[
 \E\big[\bW^2\big]=\frac{2k\bK}{n^2N(n)_4}\big(M(N-M)(k-1)+(\bk-2)NV\big).
\]
A similar calculation also gives
\begin{align*}
 \E\big[W\bW\big]
 &=\frac{1}{n^2}\sum_{e,f\in E(G)}\left(\Prb(e\subseteq S,f\subseteq \bS)-\Prb(e\subseteq S)\Prb(f\subseteq\bS)\right)\\
 &=\frac{1}{n^2}\bigg(\frac{(k)_2(\bk)_2}{(n)_4}(M^2-M-2P)-\frac{(k)_2(\bk)_2}{(n)_2^2}M^2\bigg)\\
 &=\frac{4K\bK}{n^2 N(n)_4}\big(M(N-M)-NV\big).
\end{align*}
The expression for $\Sigma$ then follows from these expressions. We note that for any vector
$v=(\alpha,\beta)^T$, $v^T\Sigma v$ is the variance of the random variable $\alpha W+\beta \bW$.
Thus if $\Sigma$ is not positive definite, then there must be a non-trivial linear relationship
$\alpha e(S)+\beta e(\bS)=c$ for all choices of~$S$. Assume first that $\alpha\ne-\beta$. Then by interchanging
$x\in S$ with $\bx\in \bS$ we deduce from \eqref{eq:de} that
\[
 \alpha\big(d_S(x)-d_S(\bx)-\one_{x\bx}\big)+\beta\big(d_{\bS}(\bx)-d_{\bS}(x)-\one_{x\bx}\big)=0.
\]
Thus $\alpha d_S(x)-\beta d_{\bS}(x)$ is a constant ($=\alpha d_S(\bx)-\beta d_S(\bx)$) for all
$x\in S\setminus N(\bx)$, and a different constant ($=\alpha d_S(\bx)-\beta d_S(\bx)+(\alpha+\beta)$) for all
$x\in S\cap N(\bx)$. As this holds for all choices of $\bx\in \bS$ we deduce that $S\cap N(\bx)$ is
independent of $\bx\in\bS$. But by varying~$S$ (and assuming $k,\bk\ge2$) 
we deduce that $N(\bx)\setminus\{\bx,\by\}=N(\by)\setminus\{\bx,\by\}$ for any two vertices
$\bx,\by\in V(G)$. This is a contradiction
unless $G$ is either the empty graph or the complete graph. On the other hand, $v^T\Sigma v=0$ for $v=(1,-1)^T$ implies
that $d(x)=d_S(x)+d_{\bS}(x)$ is constant for all $x\in S$ which, as this holds for all $S$ and $k\ge2$,
immediately implies that $G$ is a regular graph. Thus in all cases $G$ is regular, which implies $V=0$.
Conversely, if $V=0$ then $\Sigma$ is clearly singular.
\end{proof}

In the case that $V=0$, so $G$ is regular, $W=\bW$ and \cref{l:eww} reduces
to the statement that $\E[W^2]=\frac{2K\bK}{n^2N^2(n-2)(n-3)}M(N-M)$.

We now move on to bounds for the terms $A$ and $B$ appearing in \cref{thm:Stein}. We start with the following
lemma which implies a suitable bound on~$B$.

\begin{lemma}\label{l:SteinB}
 Suppose that, for a constant\/ $\eps>0$, $G$ satisfies 
 \begin{equation}\label{eq:3}
  \frac{1}{n}\sum_{v\in V(G)} |d(v)-d(G)|^3=O(n^{3/2+ \eps}).
 \end{equation} 
 Then $\E\big[|W'-W|^i|\bW'-\bW|^{3-i}\big]=O(n^{-3/2+\eps})$ for $i=0,\dots,3$.
\end{lemma}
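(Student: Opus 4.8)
The plan is to bound $|W'-W|$ and $|\bW'-\bW|$ pointwise and then take expectations, exploiting the fact that the hypothesis~\eqref{eq:3} controls the third moment of the degree deviations. Recall from~\eqref{eq:de} that $n(W'-W)=e(S')-e(S)=d_S(\bx)-d_S(x)-\one_{x\bx}$, and symmetrically $n(\bW'-\bW)=d_{\bS}(x)-d_{\bS}(\bx)-\one_{x\bx}$. Since $d_S(\bx)\le d(\bx)$ and $d_S(x)=d(x)-d_{\bS}(x)$, each of these increments is, up to an additive $O(1)$ from the indicator terms, a difference of two quantities each of which is at most the corresponding degree. The key observation is that we do not want to bound by $\Delta(G)$, which is only $O(n)$ and too weak; instead we write everything in terms of the \emph{centred} degrees $d(v)-d(G)$, for which~\eqref{eq:3} gives an averaged $\ell^3$ bound of $O(n^{3/2+\eps})$ — in particular a ``typical'' vertex has $|d(v)-d(G)|=O(n^{1/2+\eps/3})$.

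Concretely, I would first write $e(S')-e(S) = (d_S(\bx)-d(G)k/n) - (d_S(x)-d(G)k/n) - \one_{x\bx}$ and, more usefully, split $d_S(x)$ as $\tfrac kn d(x) + (d_S(x) - \tfrac kn d(x))$. The first piece $\tfrac kn d(x)$ is controlled by $d(x)-d(G)$ since $\tfrac kn d(G)$ is a constant independent of $x$; the fluctuation piece $d_S(x)-\tfrac kn d(x)$ is a centred hypergeometric-type deviation (the number of neighbours of $x$ landing in a random $k$-set) which, crucially, has all moments $O(n^{i/2})$ for the $i$-th moment since $\Var_S(d_S(x)\mid x)=O(n)$ and it is a sum over at most $n$ negatively correlated indicators — so Hoeffding/Azuma-type concentration gives sub-Gaussian tails at scale $n^{1/2}$, hence $\E_S|d_S(x)-\tfrac kn d(x)|^i = O(n^{i/2})$ uniformly. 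Thus $\E[|W'-W|^i\mid x,\bx]$, after the split, reduces to bounding $\E\big[(|d(x)-d(G)|+|d(\bx)-d(G)|+n^{1/2}\cdot(\text{sub-Gaussian}))^i\big]/n^i$. Averaging over the uniformly random $x\in S$ and $\bx\in\bS$ (which are themselves close to uniform over $V(G)$ up to the conditioning on $S$, but one can first average over $S$), the term $\tfrac1n\E|d(x)-d(G)|^i$ is at most $(\tfrac1n\sum_v|d(v)-d(G)|^3)^{i/3} = O(n^{(3/2+\eps)i/3}) = O(n^{i/2+\eps i/3})$ for $i\le 3$ by power-mean / Jensen, and the sub-Gaussian term contributes $O(n^{i/2})$.

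Putting the pieces together, each of $\E|W'-W|^i$ and $\E|\bW'-\bW|^{3-i}$ is $O(n^{-i/2+\eps'})\cdot O(n^{-(3-i)/2+\eps'}) = O(n^{-3/2+\eps})$ after a Cauchy--Schwarz step to handle the mixed moment $\E[|W'-W|^i|\bW'-\bW|^{3-i}]$ (writing it as $\le (\E|W'-W|^3)^{i/3}(\E|\bW'-\bW|^3)^{(3-i)/3}$, so it suffices to prove the $i=3$ and $i=0$ cases, i.e.\ $\E|W'-W|^3 = O(n^{-3/2+\eps})$ and the symmetric statement). The main obstacle is the bookkeeping around the fluctuation term $d_S(x)-\tfrac kn d(x)$: one must be careful that the conditioning on the random set $S$ (which is what makes $W$ a function of $S$) does not spoil the concentration, and that the choice of the removed vertex $x$ being uniform \emph{within} $S$ rather than within $V(G)$ is handled — the cleanest route is to note that $(S,x,\bx)$ has the same law as first picking $x$ uniform in $V(G)$, $\bx$ uniform in $V(G)\setminus\{x\}$, and then a uniform $(k-1)$-subset of $V(G)\setminus\{x,\bx\}$ to complete $S$; with this ordering the degree deviations $|d(x)-d(G)|$, $|d(\bx)-d(G)|$ are exactly averaged against~\eqref{eq:3}, and $d_S(x)-\tfrac kn d(x)$ is a clean hypergeometric fluctuation to which standard concentration applies. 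Everything else is power-mean inequalities and the elementary identity~\eqref{eq:de}.
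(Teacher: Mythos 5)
Your proposal is correct and follows essentially the same route as the paper: reduce the mixed moments to the pure third moments via H\"older's inequality, use \eqref{eq:de} to express $e(S')-e(S)$ in terms of $d_S(x)$ and $d_S(\bx)$, center around the average degree so that one piece is a hypergeometric fluctuation with third central moment $O(n^{3/2})$ and the other is the degree deviation $|d(\cdot)-d(G)|^3$ averaged against a uniform vertex via exactly the reordering of $(S,x,\bx)$ you describe, and then apply the hypothesis~\eqref{eq:3}. (Minor cosmetic points only: the inequality you invoke for the mixed moments is H\"older rather than Cauchy--Schwarz, and the splitting point $\tfrac kn d(x)$ versus the paper's $\tfrac{k-1}{n-1}d(G)$ differ by an $O(1)$ shift that is harmless.)
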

\begin{remark}
Indeed, for any $\eps>0$, condition~\eqref{eq:3} holds w.v.h.p.\ in $G_{n,p}$ and hence also in $G_{n,M}$ because,
for a fixed vertex $v$, w.v.h.p.\ we have $|d(v)-d(G)| \le n^{1/2+\eps/3}$, and thus by a union
bound this holds w.v.h.p.\ for all vertices.
\end{remark}

\begin{proof}
It is enough that the third moments $\E[|e(S')-e(S)|^3]$ and $\E[|e(\bS')-e(\bS)|^3]$ are both $O(n^{3/2+\eps})$
as $W'-W=(e(S')-e(S))/n$, $\bW'-\bW=(e(\bS')-e(\bS))/n$, and in general $\E|XY^2|\le (\E|X^3|)^{1/3}(\E|Y^3|)^{2/3}$
and $\E|X^2Y|\le (\E|X^3|)^{2/3}(\E|Y^3|)^{1/3}$ by H\"older's inequality. We shall only show that
$\E[|e(S')-e(S)|^3]=O(n^{3/2+\eps})$ as the result for $\E[|e(\bS')-e(\bS)|^3]$ follows by symmetry.

Using \eqref{eq:de}, that $\E[(|X|+|Y|)^3]=O(\E|X|^3+\E|Y|^3)$ for random variables $X$, $Y$, and $d(G)\le n-1$, we get
\begin{align*}
 \E\big[|e(S')-e(S)|^3\big]
 &=\E\big[|d_S(\bx)-d_S(x)-\one_{x\bx}|^3\big]\\
 &=O\big(\E\big[|d_S(\bx)-\tfrac{k}{n-1}d(G)|^3\big]+\E\big[|d_S(x)-\tfrac{k-1}{n-1}d(G)|^3\big]+1\big).
\end{align*}
Thus we need to show that $\E[|d_S(\bx)-\tfrac{k}{n-1}d(G)|^3]$ and $\E[|d_S(x)-\tfrac{k-1}{n-1}d(G)|^3]$ are $O(n^{3/2+\eps})$.
Fixing $\bx$ and taking $S$ to be a random $k$-subset of $V(G)\setminus\{\bx\}$ gives
\[
 \E\big[|d_S(\bx)-\tfrac{k}{n-1}d(G)|^3\mid \bx\big]=O\big(\E\big[|X-\E[X]|^3\big]+\big|\E[X]-\tfrac{k}{n-1}d(G)\big|^3\big),
\]
where $X$ is a hypergeometric random variable obtained by selecting $k$ items from a set $V(G)\setminus\{\bx\}$ of size $n-1$
and counting the number of these which occur in a given set $N(\bx)$ of size $d(\bx)$. We note that $\E[|X-\E X|^3]=O(n^{3/2})$
and $\E X=\frac{k}{n-1}d(\bx)$. Then randomising over~$\bx$, and using $k\le n$ and~\eqref{eq:3}, we see that 
\begin{align*}
 \E\big[|d_S(\bx)-\tfrac{k}{n-1}d(G)|^3\big]
 &=\tfrac{1}{n}\sum_{\bx} \E\big[|d_S(\bx)-\tfrac{k}{n-1}d(G)|^3\mid \bx\big]\\
 &=O\Big(n^{3/2}+\tfrac{1}{n}\sum_{\bx}|d(\bx)-d(G)|^3\Big)=O\big(n^{3/2+\eps}\big).
\end{align*}
The proof is completed by an analogous argument for $\E[|d_S(x)-\tfrac{k-1}{n-1}d(G)|^3]$.
\end{proof}

The last missing ingredient for our application of \cref{thm:Stein} is a suitable bound on the $A$ appearing there,
which will be shown in \cref{l:var} below. First, we need a concentration estimate for the variance (and covariance)
of various degrees in~$G$.

\begin{lemma}\label{lem:Vconc}
 Let\/ $p=p(n)\in[\delta,1-\delta]$ and fix a set\/ $S$ of size~$k$.
 Let\/ $x$ be a random element of\/ $S$ and\/ $\bx$ a random element of\/ $\bS$ as before.
 Then, for any $\eps>0$, w.v.h.p.\ in $G_{n,p}$ we have
 \begin{equation}\label{eq:varx}
  \big|\Var_x d_S(x)-p(1-p)k\big|=O(n^{1/2+\eps}),
 \end{equation}
 \begin{equation}
  \big|\Var_{\bx} d_S(\bx)-p(1-p)k\big|=O(n^{1/2+\eps}),
 \end{equation}
 and
 \begin{equation}
  \Cov_{\bx}\big(d_S(\bx),d_{\bS}(\bx)\big)
  =\E_{\bx}\big[d_S(\bx)d_{\bS}(\bx)\big]-\E_{\bx}\big[d_S(\bx)\big]\E_{\bx}\big[d_{\bS}(\bx)\big]
  = O(n^{1/2+\eps}).
 \end{equation}
\end{lemma}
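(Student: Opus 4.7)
The plan is to treat each of the three estimates by decomposing the sample variance or covariance into a dominant quadratic-form term and a small sample-mean correction. Setting $Y_x := d_S(x)-(k-1)p$ for $x\in S$, and $Y'_{\bx} := d_S(\bx)-kp$, $Z_{\bx} := d_{\bS}(\bx)-(\bk-1)p$ for $\bx\in\bS$, the standard identity gives
\[
 \Var_x d_S(x) = \tfrac{1}{k}\sum_{x\in S} Y_x^2 - \bar Y^2,\quad
 \Var_{\bx} d_S(\bx) = \tfrac{1}{\bk}\sum_{\bx\in\bS}(Y'_{\bx})^2 - (\overline{Y'})^2,
\]
\[
 \Cov_{\bx}\bigl(d_S(\bx),d_{\bS}(\bx)\bigr) = \tfrac{1}{\bk}\sum_{\bx\in\bS}Y'_{\bx}Z_{\bx} - \overline{Y'}\,\bar Z,
\]
with bars denoting the relevant averages. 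Since $\bar Y$, $\overline{Y'}$, and $\bar Z$ are (normalisations of) deviations of the binomial sums $e(S)$, $e(S,\bS)$, $e(\bS)$ in $G_{n,p}$, Bernstein's inequality gives $|\bar Y|,|\overline{Y'}|,|\bar Z|=O(n^\eps)$ w.v.h.p., so the three ``mean squared'' corrections are each $O(n^{2\eps})$, negligible compared with the target error.

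It remains to show each leading sum is concentrated to within $O(n^{3/2+\eps})$ of its expectation w.v.h.p., since dividing by $k$ or $\bk$ then yields the claimed $O(n^{1/2+\eps})$ bounds once combined with $\E[\sum_x Y_x^2]=k(k-1)p(1-p)$, $\E[\sum_{\bx}(Y'_{\bx})^2]=\bk kp(1-p)$, and $\E[\sum_{\bx}Y'_{\bx}Z_{\bx}]=0$ (the last by the independence of $Y'_{\bx}$ and $Z_{\bx}$ for each fixed $\bx$). Each sum is a polynomial of degree at most $2$ in the independent centred edge variables $\eta_{uv}:=\one_{uv}-p$, so we control it by a Hanson--Wright bound
\[
 \Prb\bigl(|\eta^T A\eta-\E[\eta^T A\eta]|\ge t\bigr)\le 2\exp\!\bigl(-c\min(t^2/\|A\|_F^2,\,t/\|A\|_{\mathrm{op}})\bigr),
\]
and the choice $t=n^{3/2+\eps}$ makes both arguments of the min at least $n^{2\eps}$ once we verify $\|A\|_F^2=O(n^3)$ and $\|A\|_{\mathrm{op}}=O(n)$ in each case. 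For case~(i), $\sum_{x\in S}Y_x^2=\eta^T A\eta$ on the $\binom{k}{2}$ edges of $\binom{S}{2}$, with $A_{ee}=2$ and $A_{ef}=1$ exactly when distinct edges share a vertex; a direct count gives $\|A\|_F^2=O(k^3)$, and since $A=2I+L(K_k)$ (the adjacency matrix of the triangular graph) its maximum eigenvalue is $2(k-1)=O(n)$. For case~(ii), $\sum_{\bx}(Y'_{\bx})^2$ is a quadratic form in the $\bk k$ independent $S$-$\bS$ edge variables with block-diagonal matrix (one $k\times k$ all-ones block per $\bx$), giving $\|A\|_F^2=O(n^3)$ and $\|A\|_{\mathrm{op}}=k$. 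Case~(iii) is handled either by the bilinear analogue of Hanson--Wright, or by conditioning on the $S$-$\bS$ edges to rewrite $\sum_{\bx}Y'_{\bx}Z_{\bx}=\sum_{\{\bx,y\}\subseteq\bS}(Y'_{\bx}+Y'_y)\,\eta_{\bx y}$ as a linear form in the independent $\bS$-$\bS$ edge variables, whose conditional variance $O(\bk\sum_{\bx}(Y'_{\bx})^2)=O(n^3)$ w.v.h.p.\ follows from case~(ii) and to which Bernstein applies.

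The main obstacle is case~(i), where the $Y_x^2$ for different $x\in S$ share edges, and a McDiarmid or Azuma argument over the $\binom{k}{2}$ edges of $G[S]$ is far too weak (the individual bounded difference is of order $k$, giving deviation only of order $n$); one must genuinely exploit the quadratic structure through Hanson--Wright (or an equivalent variance-aware Freedman bound on the edge-exposure martingale) to bring the deviation down to $n^{3/2+\eps}$. Cases~(ii) and~(iii) benefit from independence across $\bS$, but even there a naive Bernstein on the $\bk$ vertex-summands is too weak because of the $O(n^2)$ summand bound, and the same quadratic/bilinear concentration machinery is needed.
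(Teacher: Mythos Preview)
Your approach is correct and genuinely different from the paper's. You decompose each sample (co)variance into a centred quadratic (or bilinear) form in the edge variables $\eta_e=\one_e-p$ plus a negligible sample-mean correction, and then invoke Hanson--Wright with the Frobenius and operator-norm bounds you compute; this is clean and modular, and the norm estimates $\|A\|_F^2=O(n^3)$, $\|A\|_{\mathrm{op}}=O(n)$ are exactly what is needed to hit the target deviation $n^{3/2+\eps}$. The paper instead works directly with $V_i=k^2\Var_x d_S(x)$ (and analogues) via the edge-exposure martingale $X_s=\E[V_i\mid\cF_s]$: the key observation is that the increment $|X_s-X_{s-1}|$ equals $|(\one_{ab}-p)\,\E[c_{ab}\mid\cF_s]|$ for an explicit coefficient $c_{ab}$ which, after Chernoff on the partial degrees $\E[d_S(v)\mid\cF_s]$, is w.v.h.p.\ only $O(n^{3/2+\eps/2})$ rather than the worst-case $O(n^2)$; the relaxed Azuma--Hoeffding of \cref{AH} then closes the argument. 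This is precisely the ``variance-aware martingale'' alternative you allude to in your final paragraph. Your route has the advantage of packaging the cancellation into a single off-the-shelf inequality, while the paper's route is more self-contained (it uses only tools already in the appendix). One small omission in your case~(iii) conditioning argument: to apply Bernstein to the linear form $\sum_{\{\bx,\by\}\subseteq\bS}(Y'_{\bx}+Y'_{\by})\eta_{\bx\by}$ you also need the coefficients bounded, i.e.\ $\max_{\bx}|Y'_{\bx}|=O(n^{1/2+\eps'})$ w.v.h.p., which is an immediate Chernoff plus union bound but should be said.
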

\begin{proof}
In the following sums, the indices $v$, $w$, $y$, $z$ run over vertices in~$S$, and the indices $\bv$, $\bw$, $\by$, $\bz$
run over vertices in~$\bS$. We use primes on sums to indicate that summation is over tuples of distinct elements only. 
Let $\one_{vw}$ be the indicator variable that the edge $vw$ is present in $G_{n,p}$ as before;
in particular we have $\one_{vv}=0$ for any vertex~$v$. We may then expand
\begin{align*}
 V_1&:=k^2 \cdot \Var_x d_S(x)=k\sum_v d_S(v)^2-\sum_{v,w}d_S(v)d_S(w)\\
 &\phantom{:}=k\sum_{v,y,z}\one_{vy}\one_{vz}-\sum_{v,w,y,z}\one_{vy}\one_{wz}\\
 &\phantom{:}=(k-2)\sideset{}{'}\sum_{v,y}\one_{vy}
  +(k-4)\sideset{}{'}\sum_{v,y,z}\one_{vy}\one_{vz}
  -\sideset{}{'}\sum_{v,w,y,z}\one_{vy}\one_{wz}.
\intertext{Similarly,}
 V_2&:=\bk^2\cdot \Var_{\bx}d_S(\bx)=\bk\sum_{\bv} d_S(\bv)^2-\sum_{\bv,\bw}d_S(\bv)d_S(\bw)\\
 &\phantom{:}=\bk\sum_{\bv,y,z}\one_{\bv y}\one_{\bv z}-\sum_{\bv,\bw,y,z}\one_{\bv y}\one_{\bw z}\\
 &\phantom{:}=(\bk-1)\sum_{\bv} \sum_y \one_{\bv y}
  +(\bk-1)\sum_{\bv}\sideset{}{'}\sum_{y,z}\one_{\bv y}\one_{\bv z}
  -\sideset{}{'}\sum_{\bv,\bw}\sum_{y,z}\one_{\bv y}\one_{\bw z},
\intertext{and}
 V_3&:=\bk^2 \cdot \Cov_{\bx}(d_S(\bx),d_{\bS}(\bx))
  =\bk\sum_{\bv}d_S(\bv)d_{\bS}(\bv)-\sum_{\bv,\bw}d_S(\bv)d_{\bS}(\bw)\\
 &\phantom{:}=\bk\sum_{\bv,\by,y}\one_{\bv y}\one_{\bv\,\by}-\sum_{\bv,\bw,\by,y}\one_{\bv y}\one_{\bw\,\by}\\
 &\phantom{:}=(\bk-2)\sideset{}{'}\sum_{\bv,\by}\sum_y\one_{\bv y}\one_{\bv\,\by}
  -\sideset{}{'}\sum_{\bv,\bw,\by}\sum_y\one_{\bv y}\one_{\bw\,\by}.
\end{align*}
This allows us to compute
\begin{align*}
 \E_G[V_1]&=(k-2)\cdot (k)_2\,p+(k-4)\cdot (k)_3\,p^2-1\cdot (k)_4\,p^2=(k)_3\,p(1-p),\\
 \E_G[V_2]&=(\bk-1)\cdot \bk kp + (\bk-1)\cdot \bk (k)_2\,p^2 -1\cdot(\bk)_2 k^2 p^2=(\bk)_2 k p (1-p),\\
 \E_G[V_3]&=(\bk-2)\cdot(\bk)_2kp^2-1\cdot(\bk)_3kp^2=0.
\end{align*}
Now for the concentration inequalities, enumerate the edges of $K_n$ as $e_1,\dots,e_{\binom{n}{2}}$
and let $\cF_s$ be the $\sigma$-algebra generated by $\one_{e_1},\dots,\one_{e_s}$. For $i=1,2,3$, write
\[
 X_s^{(i)}=\E[V_i\mid \cF_s]
\]
to denote the standard edge exposure martingale for~$V_i$. We show that for $i=1,2,3$,
w.v.h.p.\ $|X^{(i)}_s-X^{(i)}_{s-1}|= O(n^{3/2+\eps})$ for all~$s$.
Suppose we expose the edge $e_s=ab$ at step~$s$. If we write $V_1=c_{ab}\one_{ab}+c$,
where $c_{ab}$ and $c$ do not depend on~$\one_{ab}$, then the coefficient $c_{ab}$ is given by
\begin{align*}
 c_{ab}&=(k-2)\cdot 2+(k-4)\cdot2\sum_{y\ne a,b}(\one_{ay}+\one_{by})-4\sideset{}{'}\sum_{y,z\ne a,b}\one_{yz}\\
 &=2k d_{S}(a)+2k d_S(b)-4\sum_{y} d_S(y)+O(n).
\end{align*}
In the last step, to obtain a simpler expression, we added some terms of order $O(n)$ that do
depend on~$\one_{ab}$, but that does not change the fact that $c_{ab}$ does not. We now get
\begin{align*}
 \big|X_s^{(1)}-X_{s-1}^{(1)}\big|&=\big(\one_{ab}-p\big) \E[c_{ab}\mid \cF_s]\\
 &=2\big(\one_{ab}-p\big)\Big(k\E[d_S(a)\,|\,\cF_s]+k\E[d_S(b)\,|\,\cF_s]-2\sum_y \E[d_S(y)\,|\,\cF_s]\Big)+O(n).
\end{align*}
The coefficient of $\one_{ab}$ in $V_2$ is 0 if $a$ and $b$ are both in~$S$, or both in~$\bS$.
Otherwise we may assume $a \in S$, $b \in \bS$, and then the coefficient is
\[
 c_{ab}=(\bk-1)+(\bk-1)\cdot 2\sum_{y\ne a}\one_{by}-1\cdot 2\sum_{y\ne a,\by\ne b}\one_{\by y}
 =2\bk d_S(b)-2\sum_{\by} d_S(\by)+O(n),
\]
yielding
\[
 \big|X_s^{(2)}-X_{s-1}^{(2)}\big|=2\big(\one_{ab}-p\big)
 \Big(\bk\,\E[d_S(b)\mid\cF_s]-\sum_{\by} \E[d_S({\by})\mid\cF_s]\Big)+O(n). 
\]
The coefficient of $\one_{ab}$ in $V_3$ is also 0 if $a$ and $b$ are both in~$S$. When $a,b\in\bS$, the coefficient is
\[
 c_{ab}=(\bk-2)\sum_y (\one_{a y}+\one_{b y})-2\sum_{\by \ne a,b} \sum_{y}\one_{\by y}
 =\bk d_S(a)+\bk d_S(b)-2\sum_{\by}d_S(\by)+O(n),
\]
which gives 
\[
 \big|X_s^{(3)}-X_{s-1}^{(3)}\big|=\big(\one_{ab}-p\big)
 \Big(\bk\,\E[d_S(a)+d_S(b)\mid\cF_s]-2\sum_{\by} \E[d_S(\by)\mid\cF_s]\Big)+O(n).
\]
When $a \in S$, $b \in \bS$, we have
\[
 c_{ab}=(\bk-2)\sum_{\bz}\one_{b \bz} - \sideset{}{'}\sum_{\bw,\bz \ne b}\one_{\bw\,\bz}
 =\bk d_{\bS}(b)-\sum_{\bw}d_{\bS}(\bw)+O(n),
\]
and hence 
\[
 \big|X_s^{(3)}-X_{s-1}^{(3)}\big|=\big(\one_{ab}-p\big)  
 \Big(\bk\,\E[d_{\bS}(b)\mid\cF_s]-\sum_{\bw} \E[d_{\bS}(\bw)\mid\cF_s]\Big)+O(n).
\]
Applying the Chernoff bound to the exposed edges, we deduce that for a fixed $s$ and $v \in V(G)$, \wvhp, $\E[d_S(v)\mid\cF_s]=pk+O(n^{1/2+\eps/2})$. Hence, by a union bound, we have that
w.v.h.p.\ this holds for all $v$ and all~$s$. Similarly, we can show that $\E[d_{\bS}(v)\mid\cF_s]=p\bk+O(n^{1/2+\eps/2})$
for all $v$ and~$s$. This can now be applied to all the expressions derived above to see that, for $i=1,2,3$,
we have w.v.h.p.\ $|X^{(i)}_s-X^{(i)}_{s-1}|=O(n^{3/2+\eps/2})$ for all~$s$. The Azuma--Hoeffding inequality (\cref{AH}) with
$t=n^{5/2+\eps}$, $c_s=O(n^{3/2+\eps/2})$ and hence $\sigma^2=O(n^{5+\eps})$ now gives the desired bound
\[
 \Prb_G\big(|V_i-\E[V_i]|>t\big)\le\exp(-n^{\Omega(1)})+2\exp\big(-t^2/2\sigma^2\big)=\exp\big(-n^{\Omega(1)}\big)
\] 
for $i=1,2,3$. The result follows as $k,\bk=\Theta(n)$.
\end{proof}

\begin{remark}\label{rem:vconc}
The proof of the first part of \cref{lem:Vconc} (that is, equation~\eqref{eq:varx}), does not require that $\bk\ge \delta n$
and hence the result also holds for $S=V(G)$. This gives precisely that $V=p(1-p)n^2+O(n^{3/2+\eps})$, which we will use
in the following subsection.
\end{remark}

\begin{lemma}\label{l:var}
 In\/ $G=G_{n,M}$, for $i=0,1,2$ and all\/ $\eps>0$, w.v.h.p.\ we have
 \[
  \Var\big(\E[(W'-W)^i(\bW'-\bW)^{2-i}\mid \vW]\big)=O(n^{-3+\eps}).
 \]
\end{lemma}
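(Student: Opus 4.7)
My plan is first to replace the outer conditional expectation by a conditional expectation given $S$. Since $\vW$ is $S$-measurable, the tower property combined with the conditional variance formula yields $\Var_S(\E[X\mid\vW]) \le \Var_S(\E[X\mid S])$ where $X=(W'-W)^i(\bW'-\bW)^{2-i}$. So it suffices to show $\Var_S(f_i(S)) = O(n^{-3+\eps})$, where $f_i(S) := \E_{x,\bx}[X\mid S]$, for each $i \in \{0,1,2\}$.

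I would next decompose $f_i(S)$ via the identity $\E[Y_1 Y_2\mid S] = \E[Y_1\mid S]\E[Y_2\mid S] + \Cov(Y_1, Y_2\mid S)$, taking $Y_1, Y_2 \in \{W'-W, \bW'-\bW\}$ according to $i$. By \cref{l:ew} each conditional mean is an exact linear function of $\vW$ with coefficients of order $1/n$, so the product $L_i(\vW)$ is a polynomial in $\vW$ of magnitude $O(n^{-2})$ typically. Since $\vW$ has bounded moments by \cref{l:eww}, a direct calculation shows $\Var_S(L_i(\vW)) = O(n^{-4})$, already well within our target.

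The rest of $f_i(S)$ is the conditional (co)variance, which after expanding $e(S')-e(S) = d_S(\bx)-d_S(x)-\one_{x\bx}$ (and its $\bS$-analogue) and using independence of $x$ and $\bx$ becomes $n^{-2}$ times a linear combination of $\Var_x(d_S(x))$, $\Var_{\bx}(d_S(\bx))$, $\Cov_x(d_S(x),d_{\bS}(x))$, $\Cov_{\bx}(d_S(\bx),d_{\bS}(\bx))$, and their $\bS$-analogues (covered by the symmetric version of \cref{lem:Vconc} obtained by swapping $S \leftrightarrow \bS$), modulo deterministic $O(n^{-2})$ corrections coming from $\one_{x\bx}$-cross-covariances. \cref{lem:Vconc} (transferred from $G_{n,p}$ to $G_{n,M}$) then shows that, for each fixed $S$ and w.v.h.p.\ in $G$, each of these terms equals a deterministic target (either $p(1-p)k$ or $0$) up to an $O(n^{1/2+\eps})$ error. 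Writing this decomposition as $f_i(S) = L_i(\vW) + c_i + n^{-2} E_i(S)$ with $c_i$ a deterministic constant and $E_i(S) = O(n^{1/2+\eps})$ pointwise in $S$, we reduce the problem to bounding $\E_S[E_i(S)^2]$.

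The main obstacle is upgrading this pointwise-in-$S$ control to a bound on $\E_S[E_i(S)^2]$ that holds w.v.h.p.\ in $G$: a naive union bound over the $\binom{n}{k}$ subsets would need exponential-in-$n$ concentration in \cref{lem:Vconc}, which is not immediately available. I would circumvent this via a Fubini--Markov argument: applying \cref{lem:Vconc} with a uniformly random $S$ gives $\Prb_{G,S}(|E_i(S)|>n^{1/2+\eps}) \le \exp(-n^{\eps'})$, so by Fubini $\E_G[\Prb_S(|E_i(S)|>n^{1/2+\eps})] \le \exp(-n^{\eps'})$, and Markov yields $\Prb_S(|E_i(S)|>n^{1/2+\eps}) \le \exp(-n^{\eps'/2})$ w.v.h.p.\ in $G$. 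Combining with the deterministic bound $|E_i(S)| = O(n^2)$ gives $\E_S[E_i(S)^2] \le n^{1+2\eps} + O(n^4)\exp(-n^{\eps'/2}) = O(n^{1+2\eps})$. Hence $\Var_S(f_i) \le 2\Var_S(L_i(\vW)) + 2n^{-4}\E_S[E_i(S)^2] = O(n^{-4}) + O(n^{-3+2\eps}) = O(n^{-3+2\eps})$, as required after relabelling~$\eps$.
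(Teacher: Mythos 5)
Your first step is a genuine and welcome simplification over the paper's argument: since $\sigma(\vW)\subseteq\sigma(S)$, the law of total variance gives $\Var(\E[X\mid\vW])\le\Var(\E[X\mid S])$, so you avoid the paper's ``typical~$\vw$'' argument entirely (the paper instead bounds the contribution of atypical $\vw$-values via the crude $n^{12}\exp(-n^{\eta'})$ estimate). Your treatment of the conditional (co)variance piece via \cref{lem:Vconc} together with the Fubini--Markov transfer is essentially the paper's argument and is correct.

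There is, however, a real gap in the treatment of $L_i(\vW)$. You assert that $\Var_S(L_i(\vW))=O(n^{-4})$ because ``$\vW$ has bounded moments by \cref{l:eww}.'' But \cref{l:eww} supplies only the second moments $\E[\vW\vW^T]=\Sigma$. Since $L_i$ is a quadratic form in $\vW$ with $O(n^{-2})$ coefficients, controlling $\Var(L_i(\vW))$ requires the \emph{fourth} moments of $\vW$, which are not provided by \cref{l:eww} and are not established elsewhere in the paper. The only unconditional bound you have is $|W|,|\bW|=O(n)$, which gives $L_i=O(1)$ pointwise and hence only $\E[L_i^2]\le O(1)\cdot\E[L_i]=O(n^{-2})$, far from $O(n^{-4})$; this would ruin the final bound. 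The fix is to handle $L_i$ with the same pointwise-plus-Fubini--Markov argument you already use for $E_i(S)$ (which is, in effect, what the paper does when it bounds $(\E_{\bx}d_S(\bx)-\E_x d_S(x))^2$ pointwise): for a fixed $S$, w.v.h.p.\ in $G_{n,p}$ one has $e(S)-pK=O(n^{1+\eps'})$ and the analogous bound for $e(\bS)$, so $|W|,|\bW|=O(n^{\eps'})$ and hence $L_i(\vW)=O(n^{-2+2\eps'})$; Fubini--Markov turns this into a bound holding for all but an exponentially small proportion of $S$, and combining with the deterministic $L_i=O(1)$ gives $\E_S[L_i^2]=O(n^{-4+4\eps'})$. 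A small related point: the $\one_{x\bx}$ correction terms you describe as ``deterministic $O(n^{-2})$ corrections'' are in fact $S$- and $G$-dependent of size $O(n^{1/2+\eps})$ before the $n^{-2}$ scaling, so they should be folded into $E_i(S)$ rather than into $c_i$; your argument accommodates this anyway.
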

\begin{proof}
Let $0<\eps'<\eps/2$. We work in $G_{n,p}$ with $p=M/\binom{n}{2}$ and start with a fixed set~$S$. We then have
\begin{align*}
 \E_{x,\bx}\big[(e(S')-e(S))^2\big]
 &=\E_{x,\bx}\big[(d_S(\bx)-d_S(x)-\one_{x\bx})^2\big]\\
 &=\E_{x,\bx}\big[(d_S(\bx)-d_S(x))^2\big]+\E_{x,\bx}\big[(1+2d_S(\bx)-2d_S(x))\one_{x\bx}\big]\\
 &=\Var_{\bx}d_S(\bx)+\Var_x d_S(x)+\big(\E_{\bx} d_S(\bx)-\E_{x} d_S(x)\big)^2\\
 &\quad+\E_{x,\bx}\big[(1+2d_S(\bx)-2d_S(x))\one_{x\bx}\big],
\end{align*}
where we have used the independence of $x$ and~$\bx$.

By \cref{lem:Vconc}, w.v.h.p.\ in $G=G_{n,p}$, $\Var_{\bx} d_S(\bx)$ and $\Var_x d_S(x)$ are both $p(1-p)k+O(n^{1/2+\eps'})$.
We also have w.v.h.p.\ that $e(S,S')-pk\bk$ and $e(S)-p\binom{k}{2}$ are $O(n^{1+\eps'})$ and thus
\begin{align*}
 \E_{\bx} d_S(\bx)-\E_{x} d_S(x)=\tfrac{e(S,\bS)}{\bk}-\tfrac{2e(S)}{k}=pk-p(k-1)+O(n^{\eps'})=O(n^{\eps'}).
\end{align*}
Finally,  w.v.h.p.\ for all $y \in S$ and $\by \in \bS$, we have $d_S(\by), d_S(y)=pk+O(n^{1/2+\eps'})$. Thus
\[
 \E_{x,\bx}\big[(1+2d_S(\bx)-2d_S(x))\one_{x\bx}\big]=O(n^{1/2+\eps'}).
\]
Putting all of this together, we see that for fixed $S$ w.v.h.p.\ in $G$ we have
\begin{align}\label{eq:square_conc}
 \big|\E_{x,\bx}\big[(e(S')-e(S))^2\big]-2p(1-p)k\big| \le \tfrac{1}{2} n^{1/2+\eps/2}.
\end{align} 
With $S$ no longer fixed and denoting by $X$ the number of $S$ failing~\eqref{eq:square_conc},
we have $\E[X]\le \exp(-2n^{\eta})\binom{n}{k}$ for some constant $\eta>0$ and hence Markov gives
\[
 \Prb_G\big(X>\exp(-n^{\eta})\tbinom{n}{k}\big)
 \le \frac{\E[X]}{\exp(-n^{\eta})\binom{n}{k}}\le\exp(-n^{\eta}),
\]
so w.v.h.p.\ in $G$ there is at most an $\exp(-n^{\eta})$ proportion of choices of $S$ not satisfying~\eqref{eq:square_conc}.

This property holds w.v.h.p.\ in $G_{n,p}$ and thus in $G_{n,M}$ and we now fix $G$ with this property.
We want to condition on $\vW$ and let $\eta'<\eta$. Say that $\vw$ is {\em typical} if $\Prb_S(\vW=\vw)\ge\exp(-n^{\eta'})$.
For a typical~$\vw$, since $|(e(S')-e(S))^2|$ is bounded by~$n^4$, we have
\[
 \big|\E\big[(e(S')-e(S))^2\mid \vW=\vw\big]-2p(1-p)k\big|
 \le \exp(-n^\eta + n^{\eta'})n^4+\tfrac{1}{2}n^{1/2+\eps/2}\le n^{1/2+\eps/2}.
\]
On the other hand, there are at most $\binom{n}{2}^2\le n^4$ possible values of $(e(S),e(\bS))$ and hence at most $n^4$
possible values of~$\bW$. Thus there are at most $n^4 \exp(-n^{\eta'}) \binom{n}{k}$ choices of $S$ where $\vW$
takes a non-typical value, yielding that
\[
 \Prb_S\big(|\E\big[(e(S')-e(S))^2\mid \vW\big]-2p(1-p)k\big|>n^{1/2+\eps/2}\big)\le n^4\exp(-n^{\eta'}).
\]
Additionally, using that $0\le \E[(e(S')-e(S))^2\mid\vW]\le n^4$, we get
\[
 \Var\big(\E\big[(e(S')-e(S))^2\mid \vW\big]\big)\le n^{12}\exp(-n^{\eta'})+n^{1+\eps}=O(n^{1+\eps}).
\]
Now noting that $W'-W=(e(S')-e(S))/n$, we obtain that w.v.h.p.\ in $G_{n,M}$ we have
\[
 \Var\big(\E\big[(W'-W)^2\mid \vW\big]\big)
 = n^{-4}\Var\big(\E\big[(e(S')-e(S))^2\mid S\big]\big)=O(n^{-3+\eps}).
\]
By symmetry, we also have $\Var(\E[(\bW'-\bW)^2\mid \vW])=O(n^{-3+\eps})$.
The proof for $\Var(\E[(W'-W)(\bW'-\bW)\mid \vW])$ is similar. Indeed, for fixed $S$ we can write
\begin{align*}
 \E_{x,\bx}\big[(e(S')-e(S))(e(\bS')-e(\bS))\big]
 &=\E_{x,\bx}\big[(d_S(\bx)-d_S(x)-\one_{x\bx})(d_{\bS}(x)-d_{\bS}(\bx)-\one_{x\bx})\big]\\
 &=-\Cov_{\bx}\big(d_S(\bx), d_{\bS}(\bx)\big)-\Cov_{x}\big(d_S(x), d_{\bS}(x)\big)\\
 &\quad+\big(\E_{\bx}[d_S(\bx)]-\E_x[d_S(x)]\big)\big(\E_x[d_{\bS}(x)]-\E_{\bx}[d_{\bS}(\bx)]\big)\\
 &\quad+\E_{x,\bx}\big[\one_{x\bx}(1-d_S(\bx)+d_S(x)-d_{\bS}(x)+d_{\bS}(\bx))\big].
\end{align*}
Once again applying \cref{lem:Vconc}, we know that w.v.h.p.\ the covariances $\Cov_{\bx}(d_S(\bx),d_{\bS}(\bx))$
and $\Cov_{x}(d_S(x),d_{\bS}(x))$ are $O(n^{1/2+\eps'})$ in $G_{n,p}$. Further, w.v.h.p. $e(S,S')-pk\bk$,
$e(S)-pk^2/2$ and $e(S,\bS)-p\bk^2/2$ are $O(n^{1+\eps'})$ and so
\begin{align*}
 \big(\E_{\bx}[d_S(\bx)]-\E_x[d_S(x)]\big)\big(\E_x[d_{\bS}(x)]&-\E_{\bx}[d_{\bS}(\bx)]\big)\\
 &=\big(e(S,\bS)/\bk-2e(S)/k\big)\big(2e(\bS)/\bk-e(S,\bS)/k\big)\\
 &=O(n^{\eps'})O(n^{\eps'})=O(n^{2\eps'}).
\end{align*}
Finally, as above w.v.h.p.\ we have $\E_{x,\bx}[\one_{x\bx}(1-d_S(\bx)+d_S(x)-d_{\bS}(x)+d_{\bS}(\bx))]=O(n^{1/2+\eps'})$. 
Putting everything together, we obtain $\E_{x,\bx}[(e(S')-e(S))(e(\bS')-e(\bS))]=O(n^{1/2+\eps'})$.
Arguing as above, we can then deduce that w.v.h.p.\ in $G_{n,M}$ we have
\begin{align*}
 \Var\big(\E\big[(W'-W)(\bW'-\bW)\mid \vW\big]\big)
 &=n^{-4}\Var\big(\E\big[(e(S')-e(S))(e(\bS')-e(\bS))\mid \vW\big]\big)\\
 &=O(n^{-3+\eps})
\end{align*}
as desired.
\end{proof}

\begin{lemma}\label{lem:Stein}
 For any $\eps>0$, w.v.h.p.\ in $G_{n,M}$, for any~$z$,
 \[
  \big|\Prb_S(W\le \sqrt{\Sigma_{11}} z)-\Prb_S(Z'\le z)\big|= O(n^{-1/4+\eps}),
 \]
 where $Z'\sim \Nor(0,1)$ is a standard normal random variable,
 \[
  \Sigma_{11}=\Var(W)=\frac{2K\bK}{n^2N^2(n-2)(n-3)}\big(M(N-M)+\tfrac{k-2}{\bk-1}NV\big)
 \]
 and\/ $V=\sum_{i=1}^n d(v)^2-nd(G)^2$.
\end{lemma}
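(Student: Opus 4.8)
The plan is to apply \cref{thm:Stein} with $d=2$ to the normalised pair $\vW=(W,\bW)^T$ defined above, and then bound the orders of all the quantities appearing in its conclusion. The exchangeability of $(\vW,\vW')$ and the identity $\E\vW=0$ have already been observed, so the two substantive hypotheses of \cref{thm:Stein} are exactly the content of earlier results: \eqref{eq:SteinW'-W} is \cref{l:ew}, which supplies the matrix $\Lambda$, and \eqref{eq:steinc1} is \cref{l:eww}, which supplies $\Sigma$ and shows it is positive definite precisely when $V>0$. To get started, I would record that w.v.h.p.\ in $G_{n,M}$ one has $V=p(1-p)n^2+O(n^{3/2+\eps})$: this is the $S=V(G)$ instance of \cref{lem:Vconc} highlighted in \cref{rem:vconc}, and it gives in particular $V=\Theta(n^2)>0$. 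Substituting $M(N-M)=\Theta(n^4)$, $K\bK=\Theta(n^4)$, $N=\Theta(n^2)$ and $V=\Theta(n^2)$ into the explicit formula for $\Sigma$ from \cref{l:eww} shows that every entry of $\Sigma$ is $\Theta(1)$ and that $\det\Sigma=\Theta(1)$; hence the eigenvalues of $\Sigma$---and therefore those of $\Sigma^{1/2}$ and $\Sigma^{-1/2}$---lie in a fixed interval bounded away from $0$ and $\infty$, so all entries of $\Sigma^{\pm1/2}$ are $O(1)$. Together with the bound from \eqref{eq:Li} that $\Lambda^{-1}$ has entries $O(n)$, this gives that $\Sigma^{-1/2}\Lambda^{-1}\Sigma^{1/2}$ has entries $O(n)$, and hence $\lambda^{(i)}=O(n)$ for $i=1,2$.

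With these in hand I would bound $A$ and $B$. Fix $0<\eps'<\eps/2$. In $B$, each term is $\lambda^{(i)}$ times $\E$ of the absolute value of a bounded (as the $\Sigma^{-1/2}$-entries are $O(1)$) linear combination of products $(W'_r-W_r)(W'_s-W_s)(W'_t-W_t)$; each such expectation is a sum of terms $\E[|W'-W|^i|\bW'-\bW|^{3-i}]$, all of which are $O(n^{-3/2+\eps'})$ by \cref{l:SteinB}, whose hypothesis~\eqref{eq:3} holds w.v.h.p.\ in $G_{n,M}$ by the remark following it. Thus $B=O(n)\cdot O(n^{-3/2+\eps'})=O(n^{-1/2+\eps'})$. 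Similarly, in $A$ each inner variance is, up to a bounded factor, at most $\max_i\Var\big(\E[(W'-W)^i(\bW'-\bW)^{2-i}\mid\vW]\big)=O(n^{-3+\eps'})$ by \cref{l:var}; taking square roots and multiplying by $\lambda^{(i)}=O(n)$ gives $A=O(n^{-1/2+\eps'})$.

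Finally I would plug $A,B=O(n^{-1/2+\eps'})$ into the bound of \cref{thm:Stein}, with $\gamma=\gamma(2)=O(1)$. From the formula for $T$ and $\sqrt{\sqrt d B+A^2/4}\ge A/2$ we get $A^2/(4d)\le T\le\tfrac1{4d}\big(\tfrac A2+\sqrt{\sqrt dB+A^2/4}\big)^2=O(n^{-1/2+\eps'})$, and also $T\ge\sqrt dB/(4d)=B/(4\sqrt d)$. Consequently $2\sqrt{dT}=O(n^{-1/4+\eps'/2})$; $\tfrac{B}{2\sqrt T}\le\tfrac12 B/\sqrt{B/(4\sqrt d)}=d^{1/4}\sqrt B=O(n^{-1/4+\eps'/2})$; and, using $-x\log x\le\sqrt x$ on $(0,1)$ together with $T\ge A^2/(4d)$, $-\tfrac A2\log T\le -A\log A+O(A)=O(\sqrt A)=O(n^{-1/4+\eps'/2})$. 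Summing, the right-hand side of \cref{thm:Stein} is $O(n^{-1/4+\eps})$. Since $\Sigma_{11}=\Var(W)$ and $W_1=W$, this is exactly the claimed inequality, and it holds on the intersection of the finitely many w.v.h.p.\ events coming from \cref{l:SteinB}, \cref{l:var} and \cref{rem:vconc}, which is itself a w.v.h.p.\ event.

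The main obstacle---though it is more a matter of care than of depth---is establishing that the \emph{eigenvalues} of $\Sigma$ (not just $\det\Sigma$, and not just positivity) are bounded away from~$0$: this is what makes $\Sigma^{-1/2}$, and hence the $\lambda^{(i)}$ and all the coefficients appearing inside $A$ and $B$, uniformly controlled, and it is where the $V=\Theta(n^2)$ estimate from \cref{rem:vconc} is genuinely used. A secondary point is tracking lower bounds on $T$ so that the $\tfrac{B}{2\sqrt T}$ and $-\tfrac A2\log T$ terms do not blow up when $T$ is small; the elementary inequality $-x\log x\le\sqrt x$ on $(0,1)$ disposes of the latter without needing any lower bound on $A$ itself.
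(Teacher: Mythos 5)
Your proof is correct and follows essentially the same route as the paper: verify the hypotheses of \cref{thm:Stein} via \cref{l:ew} and \cref{l:eww}, use \cref{rem:vconc} to get $V=\Theta(n^2)$ so that $\Sigma^{\pm1/2}$ has $O(1)$ entries and $\lambda^{(i)}=O(n)$, feed in \cref{l:SteinB} and \cref{l:var} to bound $B$ and $A$ by $O(n^{-1/2+\eps'})$, and push through the Stein bound. The only difference is one of exposition: you spell out the eigenvalue lower bound justifying the $O(1)$ control on $\Sigma^{-1/2}$ and you explicitly handle the $-\tfrac{A}{2}\log T$ term (via $T\ge A^2/(4d)$ and $-x\log x\le\sqrt{x}$), both of which the paper leaves implicit.
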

\begin{proof}
We apply \cref{thm:Stein} to our $(\vW,\vW')$. \cref{l:ew} shows that \eqref{eq:SteinW'-W} holds for the
$\Lambda$ given there and \cref{l:eww} shows that \eqref{eq:steinc1} holds with the $\Sigma$ given there.
We note that under the conditions of the theorem, the entries of both $\Sigma^{1/2}$ and $\Sigma^{-1/2}$ are $O(1)$.
Indeed, we have $V=p(1-p)n^2+O(n^{3/2+\eps})$ where $p=M/N$ by \cref{rem:vconc}, and hence $NV=(2+o(1))M(N-M)$. Thus
\[
 \Sigma=\Theta(1)\cdot\begin{pmatrix}1+2\kappa+o(1)&-1+o(1)\\-1+o(1)&1+2\kappa^{-1}+o(1)\end{pmatrix}
\]
where $\kappa=k/\bk=\Theta(1)$.
The entries of $\Lambda^{-1}$ are $O(n)$ by~\eqref{eq:Li}, and hence the terms $\lambda^{(i)}$ of \cref{thm:Stein} are also~$O(n)$.
Since the entries of $\Sigma^{-1/2}$ are $O(1)$, \cref{l:SteinB} gives $B=O(n^{-1/2+ \eps})$ and \cref{l:var} gives
$A=O(n^{-1/2+\eps})$. This then implies that $T=O(n^{-1/2+\eps})$ and $B/\sqrt{T}=O(B/\sqrt{B})=O(n^{-1/4+\eps})$.
Hence our application of \cref{thm:Stein} yields, for any~$z$,
 \[
  \big|\Prb_S(W\le \sqrt{\Sigma_{11}} z)-\Prb_S(Z'\le z)\big|= O(n^{-1/4+\eps}),
 \]
as desired.
\end{proof}

\subsection{Proof of Theorem~\ref{thm:clt}}
\label{subsec:clt}

We will now show that the output from Stein's method, namely \cref{lem:Stein}, can be reformulated into the statement of
\cref{thm:clt}. First, note that the $V$ and by extension the $\Sigma_{11}$ appearing in \cref{thm:clt}
depend on the graph $G=G_{n,M}$. To get around this dependency, we can use the fact that $V$ is well-concentrated
in $G_{n,M}$ which was essentially established in Remark~\ref{rem:vconc}. The following calculation is the source of
the definition $\lambda=\frac{(n^2-k^2)k^2}{2n^4}\cdot\frac{M(N-M)}{N^2}$ that we made in~\eqref{eq:lambda}. 

\begin{corollary}\label{cor:sigma11}
 For any $\eps>0$, \wvhp\ we have $\Sigma_{11}=\lambda(1+O(n^{-1/2+\eps}))$.
\end{corollary}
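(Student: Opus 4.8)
The plan is to substitute the concentration estimate for $V$ recorded in \cref{rem:vconc} into the exact expression for $\Sigma_{11}$ from \cref{l:eww} (restated in \cref{lem:Stein}) and then simplify; after the concentration step, everything is deterministic algebra. So I would begin by noting that \cref{rem:vconc} gives, \wvhp, that $V=p(1-p)n^2+O(n^{3/2+\eps})$ with $p=M/N$. Multiplying through by $N$ and using $Np(1-p)=M(N-M)/N$ together with $n^2/N=2+O(1/n)$, this rearranges to $NV=2M(N-M)+O(n^{7/2+\eps})$. Since $M(N-M)=\Theta(N^2)=\Theta(n^4)$, the additive error here is in fact a relative error of size $O(n^{-1/2+\eps})$, so $NV=2M(N-M)\bigl(1+O(n^{-1/2+\eps})\bigr)$.

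Next I would feed this into the bracketed term of $\Sigma_{11}$: since $M(N-M)+\tfrac{k-2}{\bk-1}NV=M(N-M)\bigl(1+\tfrac{2(k-2)}{\bk-1}+O(n^{-1/2+\eps})\bigr)$ and $k,\bk=\Theta(n)$, the coefficient simplifies, using $\bk=n-k$, to $1+\tfrac{2(k-2)}{\bk-1}=\tfrac{n+k}{n-k}\bigl(1+O(1/n)\bigr)$. For the prefactor $\tfrac{2K\bK}{n^2N^2(n-2)(n-3)}$, expanding $K=\binom k2$ and $\bK=\binom\bk2$ gives $2K\bK=\tfrac12 k(k-1)\bk(\bk-1)$, and replacing $k(k-1)\bk(\bk-1)$ by $k^2\bk^2$ and $(n-2)(n-3)$ by $n^2$ at the cost of $1+O(1/n)$ factors shows the prefactor equals $\tfrac{k^2(n-k)^2}{2n^4N^2}\bigl(1+O(1/n)\bigr)$. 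Multiplying the prefactor by the bracket, cancelling one factor of $(n-k)$, and consolidating all error terms into a single $1+O(n^{-1/2+\eps})$ then yields $\Sigma_{11}=\tfrac{k^2(n^2-k^2)}{2n^4}\cdot\tfrac{M(N-M)}{N^2}\bigl(1+O(n^{-1/2+\eps})\bigr)$, which is exactly $\lambda\bigl(1+O(n^{-1/2+\eps})\bigr)$ by the definition~\eqref{eq:lambda} of $\lambda$.

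I do not expect any genuine obstacle: the computation is routine and the only nontrivial ingredient is the concentration of the degree variance already established in \cref{lem:Vconc} and \cref{rem:vconc}. The one point that needs a little care is the bookkeeping of error terms — observing that because $M(N-M)$ and $NV$ are both of order $n^4$, the $O(n^{3/2+\eps})$ error in $V$ becomes the dominant relative error $O(n^{-1/2+\eps})$, which absorbs the $O(1/n)$ errors produced by approximating the falling factorials and the ratio $k/\bk$, so that everything collapses into the single clean multiplicative factor $1+O(n^{-1/2+\eps})$.
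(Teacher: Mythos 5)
Your proposal is correct and follows essentially the same route as the paper: substitute the concentration estimate $V=p(1-p)n^2+O(n^{3/2+\eps})$ from \cref{rem:vconc} into the exact formula for $\Sigma_{11}$, observe that the additive $O(n^{3/2+\eps})$ error translates into a relative $O(n^{-1/2+\eps})$ error because the main term is $\Theta(n^4)$, and absorb the $O(1/n)$ factors from replacing falling factorials and $(n-2)(n-3)$ by their leading terms. The paper's write-up is just a more compressed version of the same algebra (it writes the bracket as $1+\tfrac{2k}{\bk}$, which equals your $\tfrac{n+k}{n-k}$).
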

\begin{proof}
By \cref{lem:Vconc} (or specifically Remark~\ref{rem:vconc}) we may assume $V=p(1-p)n^2+O(n^{3/2+\eps})$ where $p=M/N$
is bounded away from 0 and~1. Now as $k,\bk=\Theta(n)$ and $\bk=n-k$,
\begin{align*}
 \Sigma_{11}&=\tfrac{2K\bK}{n^2N^2(n-2)(n-3)}\big(M(N-M)+\tfrac{k-2}{\bk-1}NV\big)\\
 &=\tfrac{k^2\bk^2}{2n^4}p(1-p)\big(1+\tfrac{2k}{\bk}+O(n^{-1/2+\eps})\big)\\
 &=\tfrac{k^2}{2n^4}p(1-p)\bk(\bk+2k)\big(1+O(n^{-1/2+\eps})\big)\\
 &=\tfrac{k^2}{2n^4}\cdot\tfrac{M(N-M)}{N^2}\cdot (n^2-k^2)\big(1+O(n^{-1/2+\eps})\big)\\
 &=\lambda\big(1+O(n^{-1/2+\eps})\big).\qedhere
\end{align*}
\end{proof}

To obtain our modified statement in \cref{thm:clt}, in addition to getting rid of dependency on the graph,
we also transfer to an expression given in terms of the edge count $e(S)$ without normalisation.

\begin{proof}[Proof of \cref{thm:clt}]
It is enough to show that the left hand side of~\eqref{eq:clt} is $O(n^{-1/4+\eps})$. As before,
let $Z'$ be a standard normal random variable. Using the shorthand $\rho(x):=(x-KM/N)/\sqrt{\Sigma_{11}}n$ and
$c:=\sqrt{\Sigma_{11}/\lambda}$, we may write
\begin{align*} 
\big|\Prb_S(e(S)\le z)-\Prb_S(Z\le z)\big|
 &= \big|\Prb_S(W\le \sqrt{\Sigma_{11}} \rho(z))-\Prb_S(Z'\le \sqrt{\Sigma_{11}/\lambda}\rho(z))\big|,
\end{align*}
and this expression can be bounded by
\[
 \big|\Prb_S(W\le\sqrt{\Sigma_{11}}\rho(z))-\Prb_S(Z'\le\rho(z))\big|
 +\big|\Prb_S(Z'\le\sqrt{\Sigma_{11}/\lambda}\rho(z))-\Prb_S(Z'\le\rho(z))\big|.
\]
The first of these summands is with very high probability $O(n^{-1/4+\eps})$ by \cref{lem:Stein}. 
As for the second summand, we may assume  $\rho(z)=O(\log n)$ as otherwise $\Prb(Z'\le \rho(z))$ and
$\Prb(Z'\le c\rho(z))$ are both within $O(n^{-1})$ of 0 or~1, depending on the sign on~$\rho(z)$.
Since the probability density function of $Z'$ is bounded by~1 and $c=1+O(n^{-1/2+\eps})$ by \cref{cor:sigma11}, we obtain
\[
 \big|\Prb(Z'\le \rho(z))-\Prb(Z'\le c\rho(z))\big|\le |c\rho(z)-\rho(z)|=O(|c-1|\log n)=O(n^{-1/2+2\eps}),
\]
completing the proof.
\end{proof}

\section{The local limit theorem}\label{sec:llt}

Recall that $S$ is a random subset of the vertex set of $G=G_{n,M}$ of size~$k$, where
$M/N,k/n\in[\delta,1-\delta]$ with $N=\binom{n}{2}$. We also define
$\lambda=\frac{(n^2-k^2)k^2}{2n^4}\cdot \frac{M(N-M)}{N^2}$ and note that $\lambda=\Theta(1)$.
We now derive a local limit theorem for the empirical distribution of the number $e(S)$ of edges induced
by~$S$, where the distribution is over choices of $k$-set $S$ with $G$ fixed, restated below.
 
\llt*

The conclusion of this theorem and further ones later take the form that, for all values $z\in\{0,\dots,\binom{n}{2}\}$
(or all intervals of values of~$z$), certain events involving  $\Prb(e(S)=z)$ hold with
very high probability. We remark again that when we say an event holds w.v.h.p.\ for any~$z$,
this implies that w.v.h.p.\ the event holds for all $z$ at the same time by a simple
union bound argument. We will thus pay no particular attention to the order of ``w.v.h.p.'' and
``for all~$z$''.

The proof of \cref{thm:llt} starts from the central limit theorem (\cref{thm:clt}), and from there narrows down
to point probabilities via a smoothing argument. For a convenient starting point, we reformulate the central limit
theorem to deal with probabilities of $e(S)$ being in an interval $[z_0,z_1]$.

\begin{corollary}\label{cor:clt}
 For any $\eps>0$, w.v.h.p.\ in $G=G_{n,M}$ we have for any $z_0$, $z_1$ that
 \[
  \big|\Prb_S(e(S)\in [z_0,z_1])-\Prb_S(Z \in [z_0,z_1])\big|\le n^{-1/4+\eps},
 \]
 where $Z\sim \Nor(KM/N,\lambda n^2)$.
\end{corollary}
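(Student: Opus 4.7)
The plan is to derive this corollary directly from \cref{thm:clt} by expressing the interval probability as a difference of tail probabilities. First I would apply \cref{thm:clt} with a smaller parameter $\eps'=\eps/2$, so that w.v.h.p.\ in $G$ we have $|\Prb_S(e(S)\le z)-\Prb(Z\le z)|\le n^{-1/4+\eps'}$ for all $z\in\R$ simultaneously. For the remainder of the argument I would fix such a graph~$G$ and work deterministically.

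Given any $z_0\le z_1$ (the case $z_0>z_1$ is vacuous since both probabilities are $0$), I would write
\[
 \Prb_S(e(S)\in[z_0,z_1])=\Prb_S(e(S)\le z_1)-\Prb_S(e(S)\le z_0'),
\]
where $z_0'$ is any real number with $z_0-1\le z_0'<z_0$; this is chosen so that $\Prb_S(e(S)\le z_0')=\Prb_S(e(S)<z_0)$, which is valid because $e(S)$ takes only integer values. On the Gaussian side, since $Z$ is continuous, $\Prb(Z\in[z_0,z_1])=\Prb(Z\le z_1)-\Prb(Z\le z_0)$. Applying the triangle inequality yields
\[
 \big|\Prb_S(e(S)\in[z_0,z_1])-\Prb(Z\in[z_0,z_1])\big|\le T_1+T_2+T_3,
\]
where $T_1=|\Prb_S(e(S)\le z_1)-\Prb(Z\le z_1)|$, $T_2=|\Prb_S(e(S)\le z_0')-\Prb(Z\le z_0')|$, and $T_3=|\Prb(Z\le z_0')-\Prb(Z\le z_0)|$.

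The first two terms are each bounded by $n^{-1/4+\eps'}$ by \cref{thm:clt}. For $T_3$, the mean value theorem gives $T_3\le|z_0-z_0'|\cdot\sup_x\varphi(x)$, where $\varphi$ is the density of~$Z$; since $Z$ has standard deviation of order $n$ and $|z_0-z_0'|\le 1$, this contributes $O(1/n)$. Summing, we obtain a total bound of $2n^{-1/4+\eps'}+O(1/n)$, which is at most $n^{-1/4+\eps}$ for all sufficiently large~$n$ by the choice $\eps'=\eps/2$. There is no real obstacle here: the only subtleties are the discrete/continuous mismatch between $e(S)$ and~$Z$, dealt with by the choice of $z_0'$, and absorbing the factor of~$2$ produced by the triangle inequality into the exponent, dealt with by starting from $\eps'<\eps$.
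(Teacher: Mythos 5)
Your argument is correct and follows essentially the same route as the paper: write the interval probability as a difference of cumulative probabilities (using a shifted endpoint $z_0'<z_0$ to handle the strict inequality on the discrete side), apply \cref{thm:clt} to each term, and absorb the resulting factor of $2$ plus the small Gaussian error by applying the theorem with a smaller exponent $\eps'<\eps$. The paper is slightly terser (it lets $z_0'\to z_0^-$ rather than bounding $|z_0-z_0'|\le 1$, so the term $T_3$ vanishes in the limit), but the content is the same and your additional care with the constants is sound.
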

\begin{proof}
This follows from \cref{thm:clt} together with the fact that
$\Prb_S(e(S)\in [z_0,z_1])=\Prb(e(S)\le z_1)-\Prb(e(S)<z_0)=\Prb(e(S)\le z_1)-\Prb(e(S)\le z'_0)$,
where $z'_0<z_0$ is sufficiently close to~$z_0$.
\end{proof}

We also record a (weaker) asymptotic upper bound on these probabilities of intervals,
which will come in handy in the smoothing procedure that is central in the argument to come. 

\begin{corollary}\label{cor:initialbdd}
 For $p=p(n)\in[\delta,1-\delta]$ and\/ $\eps>0$, there is a constant\/ $C>0$
 such that w.v.h.p.\ in $G_{n,p}$, for any $z$ and any $r$ with $r\ge n^{3/4+\eps}$, 
 \[
  \Prb_S\big(e(S) \in [z,z+r]\big)\le Cr/n.
 \]
\end{corollary}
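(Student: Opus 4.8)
The plan is to obtain this directly from the interval central limit theorem \cref{cor:clt}, after transferring it from $G_{n,M}$ to $G_{n,p}$. First I would condition on $M := e(G_{n,p})$. Conditioned on $e(G_{n,p}) = M$ the graph is distributed as $G_{n,M}$, so \cref{cor:clt} applies whenever $M/N \in [\delta/2,1-\delta/2]$ and yields, for all intervals $[z,z+r]$ simultaneously,
\[
 \big|\Prb_S(e(S)\in[z,z+r]) - \Prb_S(Z\in[z,z+r])\big| \le n^{-1/4+\eps'},
\]
with $Z\sim\Nor(KM/N,\lambda n^2)$ and $\eps':=\eps/2$; call $\mathcal{E}$ the event that this fails. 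By a Chernoff bound, $\Prb(M/N\notin[\delta/2,1-\delta/2]) \le \exp(-n^{\Omega(1)})$, and decomposing $\Prb(\mathcal{E}) = \sum_M \Prb(e(G_{n,p})=M)\,\Prb(\mathcal{E}\mid e(G_{n,p})=M)$, bounding the conditional probability by $\exp(-n^{\eps'})$ on the good range of $M$ and by $1$ otherwise, gives $\Prb(\mathcal{E}) \le \exp(-n^{\Omega(1)})$. Thus \wvhp\ in $G_{n,p}$ the displayed estimate holds.

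Next I would bound the Gaussian term. Since $k/n$ and $M/N$ are bounded away from $0$ and $1$, the variance factor satisfies $\lambda \ge \lambda_0$ for a constant $\lambda_0 = \lambda_0(\delta)>0$, so the density of $Z$ is everywhere at most $(2\pi\lambda_0 n^2)^{-1/2}$ and hence
\[
 \Prb_S(Z\in[z,z+r]) \le \frac{r}{\sqrt{2\pi\lambda_0}\,n} = O\!\left(\frac{r}{n}\right)
\]
for every $z$ and every $r>0$.

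Finally I would combine the two estimates and absorb the additive error using the hypothesis $r \ge n^{3/4+\eps}$: since $\eps' < \eps$, for $n$ large we have $n^{-1/4+\eps'} = n^{3/4+\eps'}/n \le n^{3/4+\eps}/n \le r/n$, so on the complement of $\mathcal{E}$,
\[
 \Prb_S(e(S)\in[z,z+r]) \le O(r/n) + n^{-1/4+\eps'} \le Cr/n
\]
for a suitable constant $C = C(\delta,\eps)$, which is the claimed bound. I do not expect a genuine obstacle here; the only point needing a little care is the transfer step, since passing to $G_{n,p}$ makes the mean $KM/N$ of $Z$ (and, slightly, the value of $\lambda$) random — but this is harmless because the conclusion does not mention the mean and the variance remains of order $n^2$ uniformly in the relevant range of $M$.
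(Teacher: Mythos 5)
Your proof is correct and follows essentially the same route as the paper's: apply \cref{cor:clt} for fixed $M$, bound the Gaussian term by $r/(n\sqrt{2\pi\lambda})$ using \cref{lem:normal}, absorb the additive $n^{-1/4+\eps'}$ error via $r\ge n^{3/4+\eps}$, and transfer to $G_{n,p}$ by conditioning on $M=e(G_{n,p})$ and noting the constant is uniform over $M/N\in[\delta/2,1-\delta/2]$. The only difference is cosmetic (you transfer to $G_{n,p}$ before rather than after combining the two bounds), and your explicit Chernoff estimate for $\Prb(M/N\notin[\delta/2,1-\delta/2])$ is in fact slightly sharper than the paper's stated $o(1/n)$.
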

\begin{proof}
With $Z$ as before, \cref{cor:clt} gives that for a fixed $M=M(n)$ w.v.h.p.\ in $G_{n,M}$,
we have $\Prb(e(S) \in [z,z+r])\le \Prb(Z\in [z,z+r]) + n^{-1/4+\eps}$. Thus, as the pdf of $Z$
is bounded by $1/(n\sqrt{2\pi\lambda})$ (see~\cref{lem:normal}),
\begin{align*}
 \Prb\big(e(S) \in [z,z+r]\big)\le \tfrac{r}{n\sqrt{2\pi\lambda}}+n^{-1/4+\eps}\le Cr/n
\end{align*} 
for a suitable constant $C$ and provided $r\ge n^{3/4+\eps}$. Since this holds in $G_{n,M}$ for any
$M$ with the same constant~$C$, revealing the number of edges in $G_{n,p}$, yields the same
assertion in $G_{n,p}$, noting that the probability that $M/N\notin[\delta/2,1-\delta/2]$ is $o(1/n)$.
\end{proof}

In addition to the preceding corollaries, the other main ingredients needed for our smoothing
method are certain concentration results which we prove in \cref{subsec:lltconc}.
The smoothing procedure itself is then described and established in \cref{subsec:lltsmoothing},
and we apply it to prove \cref{thm:llt} in \cref{subsec:lltpf}. While the target local
limit theorem is for $G_{n,M}$, it is convenient to work for the most part in $G_{n,p}$ for
some $p=p(n)\in[\delta,1-\delta]$. We only return to $G_{n,M}$
for the final proof, at which point we transfer results between models as needed.

\subsection{Concentration of \texorpdfstring{$e(S)-e(S')$}{e(S)-e(S')} in \texorpdfstring{$G_{n,p}$}{Gnp}}
\label{subsec:lltconc}

With the justification at the end of the preceding section, we now work in $G_{n,p}$
for some $p=p(n)\in[\delta,1-\delta]$ and assume $V(G_{n,p})=[n]$. 

To analyse our random model, we view our random set $S$ as being constructed by taking a union $S=S'\cup T$
of a random $(k-t)$-set $S'\subseteq [n]$ and a random $t$-set $T\subseteq [n]\setminus S'$, where $t=t(n)$
will be of order $1\ll t\ll n$. We will condition on $S'$ and compare probabilities involving $e(S)-e(S')$
to a better-behaved binomial random variable~$Y$. Henceforth let $Y\sim \Bin((k-t)t+\binom{t}{2},p)$
and note that for fixed $S=S'\cup T$, in $G_{n,p}$, $e(S)-e(S')$ is distributed like~$Y$. 

The next two lemmas show that in $G_{n,p}$, for any interval $A$ of appropriate length and for almost all $S'$,
$\Prb_T(e(S)-e(S') \in A)$ is concentrated around its mean $\Prb(Y \in A)$. We first do this in
\cref{lem:BernsteinAppl} with $S'$ fixed and $T$ chosen from a family of disjoint sets in
$[n]\setminus S'$. In this setting, the relevant edge sets are independent allowing us to use simple
concentration techniques. The more general case is then contained in \cref{lem:markov}.

\begin{lemma}[$T$ from disjoint family]\label{lem:BernsteinAppl}
 Assume $p,k/n\in[\delta,1-\delta]$.
 Let\/ $\beta,\eps>0$ be constants and let\/ $a,t\in\N$ be such that\/ $t\le k/2$ and
 \begin{equation}\label{eq:bernstein_condition}
  a \le t^{3/2}n^{2\beta-1/2}.
 \end{equation}
 Let\/ $S'\subseteq [n]$ be fixed with $|S'|=k-t$. Define $m=\lfloor\frac{n-k+t}{t}\rfloor$ and let\/
 $\cT=\{T_i\}_{i\in [m]}$ be a family of pairwise disjoint\/ $t$-sets in $[n]\setminus S'$.
 Let\/ $T$ be a uniformly chosen random element of\/ $\cT$ and\/ $A=[z,z+a)$ an interval.
 If\/ $n$ is sufficiently large, then with probability at least $1-e^{-n^\eps}$ in $G_{n,p}$ we have 
 \[
  \big|\Prb_T\big(e(S'\cup T)-e(S')\in A\big)-\Prb(Y \in A)\big|\le \Delta_A,
 \]
 where
 \begin{align}\label{eq:Delta}
  \Delta_A=
  \begin{cases}
   e^{-n^\eps}/m,&\text{if\/ }\Prb(Y\in A)\le e^{-n^\eps}/m;\\
   n^{\beta+\eps}/m,&\text{otherwise.}
  \end{cases}
 \end{align}
\end{lemma}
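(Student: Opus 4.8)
The plan is to fix the interval $A=[z,z+a)$ and the disjoint family $\cT=\{T_i\}_{i\in[m]}$, and to view the quantity $\Prb_T(e(S'\cup T)-e(S')\in A)$ as $\frac1m\sum_{i=1}^m \one_{E_i}$, where $E_i$ is the event (in $G_{n,p}$) that $e(S'\cup T_i)-e(S')\in A$. The key structural observation is that $e(S'\cup T_i)-e(S')$ depends only on the edges with at least one endpoint in $T_i$ together with the other endpoint in $S'\cup T_i$; since the $T_i$ are pairwise disjoint and disjoint from $S'$, the edge sets $\{uv: u\in T_i,\ v\in (S'\cup T_i)\}$ are pairwise disjoint as $i$ ranges over $[m]$. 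Hence the indicators $\one_{E_1},\dots,\one_{E_m}$ are \emph{mutually independent} Bernoulli random variables, each with mean exactly $\Prb(Y\in A)$ by the distributional identity noted just before the lemma (each $e(S'\cup T_i)-e(S')$ has the $\Bin((k-t)t+\binom t2,p)$ distribution). So $m\cdot\Prb_T(\cdots)$ is a sum of $m$ i.i.d.\ $\mathrm{Bernoulli}(q)$ variables with $q=\Prb(Y\in A)$, and we just need a tail bound on its deviation from $mq$.

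Next I would apply a Chernoff/Bernstein bound to $\sum_i(\one_{E_i}-q)$, splitting into the two regimes in \eqref{eq:Delta}. In the regime $q=\Prb(Y\in A)\le e^{-n^\eps}/m$, the sum $\sum_i\one_{E_i}$ has mean $mq\le e^{-n^\eps}$, and the multiplicative Chernoff bound gives that $\Prb(\sum_i\one_{E_i}\ge 1)\le mq \le e^{-n^\eps}$; since $\sum_i\one_{E_i}$ is integer-valued, with probability $\ge 1-e^{-n^\eps}$ it is $0$, so $\Prb_T(\cdots)=0$ and the deviation from $q$ is at most $q\le e^{-n^\eps}/m=\Delta_A$. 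In the regime $q>e^{-n^\eps}/m$, I would use Bernstein's inequality: $\Var(\sum_i\one_{E_i})=mq(1-q)\le mq$, so $\Prb(|\sum_i\one_{E_i}-mq|>n^{\beta+\eps})\le 2\exp(-\tfrac12 n^{2\beta+2\eps}/(mq+\tfrac13 n^{\beta+\eps}))$. The point of hypothesis \eqref{eq:bernstein_condition} is exactly to control $q=\Prb(Y\in A)$ from above so that $mq$ is not too large: since $|A|=a\le t^{3/2}n^{2\beta-1/2}$ and $Y$ has standard deviation $\Theta(\sqrt{t\,n})$ (as $(k-t)t+\binom t2=\Theta(tn)$ and $p\in[\delta,1-\delta]$), a local-CLT or anti-concentration bound for the binomial $Y$ gives $q=O(a/\sqrt{tn})=O(t\,n^{2\beta-1})$, hence $mq=O(n/t)\cdot O(tn^{2\beta-1})=O(n^{2\beta})$. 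Plugging this in, the exponent is $\Omega(n^{2\beta+2\eps}/n^{2\beta})=\Omega(n^{2\eps})$, so the deviation of $m\Prb_T(\cdots)$ from $mq$ exceeds $n^{\beta+\eps}$ with probability at most $e^{-n^\eps}$, i.e.\ $|\Prb_T(\cdots)-q|\le n^{\beta+\eps}/m=\Delta_A$ with the claimed probability.

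The main obstacle — the only non-routine point — is establishing the bound $\Prb(Y\in A)=O(a/\sqrt{tn})$ on the binomial, and more precisely being careful that it holds uniformly in $z$ (including intervals near the tails of $Y$, where the density is smaller so the bound is only easier, and intervals overlapping the bulk, where it is tight). This is a standard binomial anti-concentration estimate: for $Y\sim\Bin(n',p)$ with $n'=\Theta(tn)$ and $p$ bounded away from $0,1$, every point probability is $O(1/\sqrt{n'})=O(1/\sqrt{tn})$, so summing over the at most $a$ integers in $A$ gives the claim. I would cite or quote a suitable local limit theorem / Stirling-based bound for this. Everything else — the independence of the $E_i$, the Chernoff and Bernstein applications, and the arithmetic matching the exponents to $n^\eps$ — is mechanical, and the two cases of $\Delta_A$ fall out immediately from the two standard tail regimes (mean $\le 1$ versus mean $\gg 1$) of a sum of independent indicators.
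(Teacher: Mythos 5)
Your proposal is correct and follows essentially the same route as the paper: viewing the count of $T_i\in\cT$ with $e(S'\cup T_i)-e(S')\in A$ as a $\Bin(m,q)$ random variable (your edge-disjointness justification for independence is the right one, and the paper leaves it implicit), applying Markov in the small-mean regime and Bernstein in the other, and using $\Prb(Y\in A)=O(a/\sqrt{\Var Y})$ together with~\eqref{eq:bernstein_condition} to bound $mq=O(n^{2\beta})$. The binomial anti-concentration estimate you defer to a ``standard reference'' is exactly the paper's \cref{lem:bino}, proved in the appendix via characteristic functions.
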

\begin{proof}
In this proof, it will be easier to work with counts of sets rather than probabilities.
To that end, let $N_A$ be the number of sets $T\in \cT$ such that $e(S'\cup T)-e(S')\in A$.
Then we have $\mu_A = \E_G[N_A]=m\Prb(Y \in A)$, and hence
\[
 \Prb_G\big(|\Prb_T\big(e(S'\cup T)-e(S')\in A\big)-\Prb(Y \in A)|>\Delta_A\big)
 =\Prb_G\big(|N_A-\mu_A|>m\Delta_A\big).
\]
We proceed by working with the expression on the right hand side.
If $\Prb(Y \in A)\le e^{- n^\eps}/m$, then $\mu_A \le e^{- n^\eps}$,
so Markov's inequality gives 
\[
 \Prb_G(|N_A-\mu_A|>e^{- n^\eps})\le
 \Prb_G(N_A\ge 1)\le \E_G(N_A)\le e^{- n^\eps}.
\]
We now turn to the general case. Clearly we may assume $\delta\le\frac12$, so that
by our assumptions on $p$ and $k$ and using that $t\le k/2$ and $k\ge \delta n$ we have
\begin{equation}\label{eq:varY}
 \Var(Y)=p(1-p)\big((k-t)t+{\tbinom{t}{2}}\big)\ge \delta(1-\delta)\cdot\tfrac{kt}{2}\ge\tfrac{1}{4}\delta^2 n t.
\end{equation}
Then, by \cref{lem:bino},
\[
 p':=\Prb(Y\in A)\le \sqrt{\frac{\pi}{8\Var(Y)}}\cdot a\le \frac{2a}{\delta\sqrt{nt}}
\] 
and we have $N_A\sim \Bin(m,p')$. Hence, by our assumption~\eqref{eq:bernstein_condition},
\[
 \mu_A= mp'\le \frac{n}{t}\cdot\frac{2a}{\delta\sqrt{nt}}\le 2\delta^{-1}n^{2\beta}.
\] 
It then follows from Bernstein's inequality (\cref{Bernstein}) that, for large enough~$n$,
\[
 \Prb_G\big(|N_A-\mu_A|>n^{\beta+ \eps}\big)\le
 2\exp\left(\frac{-n^{2 \beta+2\eps}}{2(\mu_A+n^{\beta+\eps}/3)}\right)
 \le e^{-n^{\eps}}.\qedhere
\]
\end{proof}

We now show that the assertion of \cref{lem:BernsteinAppl} still holds for almost all $S'$ when
we take a fully random $T\subseteq[n]\setminus S'$. This is a straightforward application of Markov's inequality.

\begin{lemma}[Concentration]\label{lem:markov}
 Assume $p$, $k$, $\beta$, $\eps$, $a$ and\/ $t$ are as in \cref{lem:BernsteinAppl}. For a fixed\/ $(k-t)$-set $S'$
 let\/ $T$ be a random $t$-set in $[n] \setminus S'$. Then w.v.h.p.\ in $G_{n,p}$, for any
 interval\/ $A$ as above, at most an $e^{-n^{\eps}/4}$ proportion of the $(k-t)$-sets $S'$ fail to satisfy 
 \begin{equation}\label{MarkovTinequ}
  \big|\Prb_T(e(S',T)+e(T)\in A)-\Prb(Y\in A)\big|\le \Delta_A + e^{-n^{\eps}/2}.
 \end{equation}
\end{lemma}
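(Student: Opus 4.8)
The plan is to deduce \cref{lem:markov} from \cref{lem:BernsteinAppl} by averaging over a random partition of $[n]\setminus S'$ into (almost all) disjoint $t$-sets, and then applying Markov's inequality over the choice of $S'$. First I would fix $S'$ and observe that a uniformly random $t$-set $T\subseteq[n]\setminus S'$ can be generated by first choosing a uniformly random partition of $[n]\setminus S'$ into $m=\lfloor\frac{n-k+t}{t}\rfloor$ disjoint $t$-sets $\cT=\{T_i\}$ (plus a leftover part of size less than $t$, which we ignore), and then choosing $T$ uniformly from $\cT$. With this coupling, $\Prb_T(e(S',T)+e(T)\in A)=\E_{\cT}\big[\Prb_{T\sim\cT}(e(S'\cup T)-e(S')\in A)\big]$, where the outer expectation is over the random partition $\cT$.

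Next I would invoke \cref{lem:BernsteinAppl}: for each fixed partition $\cT$ and interval $A$, with probability at least $1-e^{-n^\eps}$ in $G_{n,p}$ we have $\big|\Prb_{T\sim\cT}(e(S'\cup T)-e(S')\in A)-\Prb(Y\in A)\big|\le\Delta_A$. Since the number of relevant intervals $A=[z,z+a)$ with $z$ an integer and $0\le z\le N$ is at most $n^2$ (and there are only finitely many partitions to worry about once $S'$ and $n$ are fixed — in fact we can fix one convenient partition, or union-bound over all of them at the cost of an $\exp(n^{O(1)})$ factor, still absorbed by $e^{-n^\eps}$), a union bound gives that w.v.h.p.\ in $G_{n,p}$ the bound holds simultaneously for all $A$ and all partitions $\cT$. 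On this good event, averaging over $\cT$ yields $\big|\Prb_T(e(S',T)+e(T)\in A)-\Prb(Y\in A)\big|\le\Delta_A$ for the fixed $S'$ and all $A$.

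The remaining point is to upgrade ``for a fixed $S'$, w.v.h.p.\ in $G_{n,p}$'' to ``w.v.h.p.\ in $G_{n,p}$, for almost all $S'$''. For this I would let $Z$ be the number of $(k-t)$-sets $S'$ for which \eqref{MarkovTinequ} fails for some interval $A$. By the previous paragraph, for each fixed $S'$ the probability (over $G_{n,p}$) that $S'$ is bad is at most $e^{-n^{\eps'}}$ for a slightly smaller constant $\eps'$ (after the union bound over the $\le n^2$ intervals, using that $n^2 e^{-n^\eps}\le e^{-n^{\eps}/2}$ for large $n$; the extra additive slack $e^{-n^\eps/2}$ in \eqref{MarkovTinequ} versus $\Delta_A$ absorbs any rounding from ignoring the leftover vertices of size $<t$ when passing from the partition coupling to a genuinely uniform $T$). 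Hence $\E_G[Z]\le e^{-n^{\eps}/2}\binom{n}{k-t}$, and Markov's inequality gives $\Prb_G\big(Z> e^{-n^\eps/4}\binom{n}{k-t}\big)\le e^{-n^\eps/4}$, which is the desired w.v.h.p.\ statement.

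The main obstacle — though it is more bookkeeping than genuine difficulty — is handling the discrepancy between a $T$ drawn from a fixed disjoint family and a genuinely uniform random $t$-set: the partition coupling only covers $mt=n-k+t-O(t)$ of the available vertices, so one must check that the contribution of the leftover part (size $<t\ll n$) and the averaging over partitions does not degrade the bound beyond the stated $\Delta_A+e^{-n^\eps/2}$. Choosing $t$ in the regime $1\ll t\ll n$ and noting that a uniform partition of $[n]\setminus S'$ into blocks of size $t$ induces the uniform distribution on each block makes the averaging identity exact up to the harmless leftover, so the additive $e^{-n^\eps/2}$ slack comfortably covers it.
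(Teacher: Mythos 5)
Your overall strategy—the same partition coupling plus two applications of Markov's inequality—is close to the paper's, but there is a genuine gap in the middle step where you pass from \cref{lem:BernsteinAppl} (which is a statement about one fixed disjoint family $\cT$) to a statement about the uniformly random $t$-set $T$. You propose to either union-bound over all partitions $\cT$ of $[n]\setminus S'$, or to fix a single convenient partition. Neither works. The number of (almost complete) partitions of a set of size $n-k+t$ into $t$-blocks is of order $\exp(\Theta(n\log n))$ for $t=o(n)$, while \cref{lem:BernsteinAppl} only gives failure probability $e^{-n^\eps}$ with $\eps$ a small constant; the product $\exp(\Theta(n\log n))\cdot e^{-n^{\eps}}$ is not small, so this union bound fails outright (the parenthetical remark that the $\exp(n^{O(1)})$ factor is ``still absorbed by $e^{-n^\eps}$'' is incorrect). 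Fixing one partition $\cT_0$ doesn't help either: $\Prb_{T\sim\cT_0}$ is the uniform distribution over the $m$ blocks of $\cT_0$, which is not the uniform distribution over all $\binom{n-k+t}{t}$ $t$-subsets, so the averaging identity no longer applies.

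The paper's proof fills this gap with an \emph{additional} Markov step over the random partition. Call $\cT$ ``bad'' if the conclusion of \cref{lem:BernsteinAppl} fails for it; then \cref{lem:BernsteinAppl} says $\E_G[\Prb_\cT(\cT\text{ bad})]\le e^{-n^\eps}$, and Markov over $G$ gives that w.v.h.p.\ the fraction of bad partitions is at most $e^{-n^{\eps}/2}$. Then, since the partition coupling is exact, $|\Prb_T(\cdot)-\Prb(Y\in A)|\le\E_\cT|\Prb_{T\sim\cT}(\cdot)-\Prb(Y\in A)|\le\Prb_\cT(\text{good})\cdot\Delta_A+\Prb_\cT(\text{bad})\cdot 1\le\Delta_A+e^{-n^{\eps}/2}$. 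This also corrects your explanation of the additive $e^{-n^{\eps}/2}$ term: it does not compensate for the leftover vertices in the almost-complete partition (by the symmetry argument you yourself give, the coupling is exactly uniform and the leftover is immaterial); rather it accounts for the small proportion of bad partitions for which \cref{lem:BernsteinAppl} gives no control. Your final Markov step over $S'$ is then correct as written.
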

\begin{proof}
We begin with a fixed set~$S'$. Picking a random $T$ is equivalent to choosing uniformly a random (almost complete)
partition of $[n]\setminus S'$ into $t$-sets $\cT=\{T_i\}_{i\in[m]}$ and a uniformly random $T\in\cT$. Say a family
$\cT$ is \emph{good} if, when choosing $T$ u.a.r.\ from~$\cT$, we have
$|\Prb_T\big(e(S',T)+e(T)\in A\big)-\Prb(Y\in A)|\le \Delta_A$, otherwise we say $\cT$ is \emph{bad}.

When choosing a random~$\cT$, we have $\E_G[\Prb_\cT(\cT\text{ is bad})]<e^{-n^{\eps}}$ by
\cref{lem:BernsteinAppl}. Applying Markov's inequality then gives
\[
 \Prb_G\big(\Prb_\cT(\cT\text{ is bad})\ge e^{-n^{\eps}/2}\big)
 \le \E_G[\Prb_\cT(\cT\text{ is bad})]/e^{-n^{\eps}/2}\le e^{-n^{\eps}/2}.
\]
Thus, when choosing $\cT$ and $T\in\cT$ as described above, with probability at
least $1-e^{-n^{\eps}/2}$ in $G$ we have 
\begin{align*}
 \big|\Prb_T(e(S',T)+e(T)\in A)-\Prb(Y\in A)\big|
 &\le \Prb_\cT(\cT\text{ is good})\Delta_{A} + \Prb_\cT(\cT\text{ is bad})\\
 &\le \Delta_A + e^{-n^{\eps}/2}.
\end{align*}
Writing $N_e$ for the number of $S'$ that violate \eqref{MarkovTinequ},
this gives $\E_G[N_e]\le e^{-n^{\eps}/2}\binom{n}{k-t}$.
By a second application of Markov's inequality, we now have
\[
 \Prb_G\big(N_e>e^{-n^{\eps}/4}\tbinom{n}{k-t}\big)
 \le e^{-n^{\eps}/4}
\]
as desired.
\end{proof}

\subsection{Smoothing}
\label{subsec:lltsmoothing}

For a (fixed) random graph~$G$, the only properties that we will now need to analyse the distribution of $e(S)$ are the
bounds from the CLT reformulation in \cref{cor:initialbdd} and the concentration property given in \cref{lem:markov}.
We see directly from these two statements that w.v.h.p.\ $G_{n,p}$ (with $p\in[\delta,1-\delta]$) satisfies both of these.
The goal of this subsection is to show that for such $G$ the probability that $e(S)=z$, or the
\emph{weight} of~$z$, is evenly distributed for points $z$ within intervals of a suitable length in the following sense. 

\begin{lemma}[Smoothing Lemma]\label{lem:smoothing}
 Fix $\beta\in(0,\frac{1}{10})$ and\/ $\eps\in(0,\frac{1-10\beta}{4})$.
 Then there exists an integer $a\in[n^{1-5\beta/2-\eps},n^{1-5\beta/2}]$ such that
 w.v.h.p.\ in $G_{n,p}$, for all\/ $z$, we have
 \[
  \big|\Prb_S\big(e(S)=z\big)-a^{-1}\Prb_S\big(e(S)\in [z,z+a)\big)\big| \le n^{-1-\beta+2\eps}.
 \]
\end{lemma}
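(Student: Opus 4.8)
The plan is to prove the Smoothing Lemma by an averaging/telescoping argument over shifts, using the decomposition $S = S' \cup T$ from \cref{subsec:lltconc} together with the concentration result \cref{lem:markov}. First I would fix $\beta$ and $\eps$ in the stated ranges, and set $t = \lceil n^{\beta} \rceil$ (so that $1 \ll t \ll n$), and look for the target window length $a$ in the dyadic-ish range $[n^{1-5\beta/2-\eps}, n^{1-5\beta/2}]$. The reason to have a whole interval of admissible $a$ rather than a single value is a standard trick: among polynomially many candidate lengths $a$, one can pigeonhole to a good one where the relevant error terms (which involve sums of the $\Delta_A$ over a cover by intervals $A$) are not too large; averaging over $a$ in this range lets us bound the contribution of the ``exceptional'' intervals where $\Prb(Y \in A)$ is tiny.

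The core identity is that for fixed $S'$, in $G_{n,p}$, $e(S) - e(S') = e(S',T) + e(T)$ is distributed like $Y \sim \Bin((k-t)t + \binom{t}{2}, p)$ over the choice of $T$; more precisely, $\Prb_S(e(S) = z) = \E_{S'}[\Prb_T(e(S',T)+e(T) = z - e(S'))]$. The key step is then to write
\[
 \Prb_S(e(S)=z) - a^{-1}\Prb_S(e(S) \in [z,z+a))
 = a^{-1}\sum_{j=0}^{a-1}\big(\Prb_S(e(S)=z) - \Prb_S(e(S) = z+j)\big),
\]
and to replace each $\Prb_S(e(S) = z+j)$ by the mixed quantity $\E_{S'}[\Prb(Y = z+j - e(S'))]$ up to an error controlled by \cref{lem:markov} applied with singleton-length intervals (i.e. $a=1$; note $1 \le t^{3/2} n^{2\beta - 1/2}$ holds since $t \approx n^\beta$ and $\beta$ is small, so \eqref{eq:bernstein_condition} is satisfied). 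The point is that for the binomial $Y$, consecutive point probabilities differ by a factor $1 + O(|z+j - \E Y|/\Var Y) = 1 + O(1/\sqrt{\Var Y})$ in the bulk, and $\Var Y = \Theta(nt) = \Theta(n^{1+\beta})$ by \eqref{eq:varY}; since the typical point probability is $\Theta(1/\sqrt{\Var Y}) = \Theta(n^{-(1+\beta)/2})$, summing $a \approx n^{1-5\beta/2}$ such differences and dividing by $a$ gives a main term of size $O(a \cdot n^{-(1+\beta)/2} \cdot n^{-(1+\beta)/2})/a \cdot$ (correction factor) — one needs to track this carefully, but it comes out below $n^{-1-\beta+2\eps}$ after using $\beta < 1/10$. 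The tail ranges (where $z$ is many standard deviations from $\E Y$, so point probabilities are super-polynomially small) are handled by \cref{lem:BernsteinAppl}'s first case and contribute negligibly.

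The error from passing between $\Prb_S(e(S)=z+j)$ and the $Y$-mixture is where \cref{lem:markov} enters: for each fixed shift the discrepancy is at most $\Delta_A + e^{-n^\eps/2}$ for all but an $e^{-n^\eps/4}$ fraction of $S'$, and on that bad fraction we crudely bound the probability by $1$, contributing $e^{-n^\eps/4}$, which is negligible. Summing $\Delta_A$ (with $|A|=1$, so $\Delta_A \le n^{\beta+\eps}/m$ and $m = \Theta(n/t) = \Theta(n^{1-\beta})$, giving $\Delta_A = O(n^{2\beta + \eps - 1})$) over the $a$ shifts and dividing by $a$ leaves $O(n^{2\beta+\eps-1})$; here is where having $a$ free to range helps, because summing $\Delta_A$ over a well-chosen cover keeps the constant under control, and we also use \cref{cor:initialbdd} to bound $\Prb_S(e(S) \in [z,z+a))$ itself by $O(a/n)$ when needed to control cumulative errors. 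I expect the main obstacle to be bookkeeping: making the three scales ($t \approx n^\beta$, $m \approx n^{1-\beta}$, $a \approx n^{1-5\beta/2}$) interact so that every error term is provably $\le n^{-1-\beta+2\eps}$ rather than merely ``small,'' and in particular verifying that the ratio of consecutive binomial point probabilities really does telescope to give the claimed $n^{-1-\beta}$ rather than something larger — this requires a local CLT estimate for $Y$ (as in \cref{lem:bino}) applied uniformly over the window, plus care that the window length $a$ is small compared to $\sqrt{\Var Y} = \Theta(n^{(1+\beta)/2})$, which indeed holds since $1 - 5\beta/2 < (1+\beta)/2 \iff \beta > 1/7$ — wait, that fails, so in fact $a \gg \sqrt{\Var Y}$, meaning the window spans many standard deviations of $Y$ and one must instead compare to the \emph{Gaussian} density over the window rather than treat it as locally constant; this is the real subtlety and is presumably why the CLT bound \cref{cor:clt}/\cref{cor:initialbdd} rather than a purely local estimate is invoked, with the $n^{-1/4+\eps}$ CLT error divided by $a \approx n^{1-5\beta/2}$ being absorbed into $n^{-1-\beta+2\eps}$ precisely when $\beta < 1/10$.
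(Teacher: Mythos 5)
Your proposal takes a genuinely different route from the paper, and it has a gap that you partly notice yourself but don't resolve. The paper's proof is a multi-scale iteration: it defines a finite sequence of scales $a_0=1 < a_1 < \cdots < a_{j_0}$ with $a_{j+1} = \lfloor(a_j n^{2-5\beta})^{1/3}\rfloor$, matched with $t_j = \lceil(a_j^2 n^{1-4\beta})^{1/3}\rceil$ so that each triple $(a_j,a_{j+1},t_j)$ is \emph{valid}. It then runs a reverse induction from $j=j_0$ down to $j=0$, applying the Difference Lemma (\cref{lem:AA'}) and \cref{cor:generaliter} at each step, and finally telescopes the $j_0 = O(1)$ differences $a_j^{-1}\Prb(e(S)\in[z,z+a_j)) - a_{j+1}^{-1}\Prb(e(S)\in[z,z+a_{j+1}))$ to reach singletons.

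Your single-scale plan with a fixed $t = \lceil n^\beta\rceil$ cannot work, for two related reasons. First, your claim that \eqref{eq:bernstein_condition} holds with singleton intervals ($a_A=1$) is false: you need $1 \le t^{3/2}n^{2\beta-1/2} \approx n^{7\beta/2-1/2}$, which requires $\beta\ge 1/7$, yet $\beta < 1/10$. Second, as you notice at the end, $\Var Y = \Theta(nt) = \Theta(n^{1+\beta})$ while $a \approx n^{1-5\beta/2}$, and $1-5\beta/2 > (1+\beta)/2$ throughout $\beta < 1/6$, so the window $[z,z+a)$ spans polynomially many standard deviations of $Y$. Over such a wide window the binomial point probabilities are far from constant, so the term-by-term comparison of $\Prb(Y=z+j-e(S'))$ with $\Prb(Y=z-e(S'))$ does not give $n^{-1-\beta}$. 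Your proposed patch --- compare to the Gaussian density over the window --- does not save the telescoping, because the discrepancy you need to control is between $\Prb_S(e(S)=z)$ and the binomial/Gaussian at scale $t$, not between the binomial and its Gaussian approximation; a single $t$ simply cannot make the parent window small relative to $\sqrt{nt}$ while also putting $t^{3/2}n^{2\beta-1/2}$ above $1$.

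The paper's resolution is precisely to let $t$ grow as the scale grows: at step $j$ the parent interval has length $a_{j+1}$, and condition~\ref{valid:r} of validity forces $a_{j+1}\le n^{-\beta}\sqrt{nt_j}$, so each parent window \emph{is} small relative to $\sqrt{\Var Y}$ at that step. The cube-root recursion is chosen so that only $j_0=O_\eps(1)$ steps are needed to descend from $n^{1-5\beta/2}$ to $1$, keeping the accumulated error at $O(n^{-1-\beta+\eps})$ per step and hence $\le n^{-1-\beta+2\eps}$ in total. The hypothesis \eqref{boundedprob} needed at each new, smaller scale is regenerated by \cref{cor:generaliter}, with \cref{cor:initialbdd} seeding the top scale; this bootstrapping is absent from your outline but is essential to keeping the induction alive.
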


To see how this fits into our main outline, note that the central limit theorem gives us the measure
of any interval with error $O(n^{-1/4+\eps})$. If we then work on an interval of length about $n^{1-5\beta/2}$,
the smoothing argument spreads out this error to $O(n^{-5/4+5\beta/2+\eps})$ at
each point and introduces an additional $O(n^{-1-\beta+2\eps})$ error. There is also an additional
small error from the variation of the Gaussian, and altogether this leads to the claimed local limit theorem
when $\beta$ is chosen appropriately.

\cref{lem:smoothing} will be established using an iterative procedure to gradually prove
smoothing statements for intervals of decreasing length (descending to intervals of length~1).
In each step, we consider any interval $A$ of length $r$ from the previous iteration,
and show that for any two subintervals $A_1$ and $A_2$ of $A$ with the same length~$a\ll r$, their
weights cannot differ too much from each other. This is the content of \cref{lem:AA'}.
Combining all these smoothing statements, we obtain that the probabilities at two single points in any
interval of length given by the first iteration step do not differ too much, giving \cref{lem:smoothing}.
The condition that \cref{lem:AA'} imposes on intervals of length $r$ to get an iteration
started is rather weak -- we ask that weights of those intervals are at most $O(r/n)$. As we will
see in \cref{cor:generaliter}, this condition follows directly from the assertion of the previous application
of \cref{lem:AA'}; at the first iteration, it follows from the CLT in the form of \cref{cor:initialbdd}.

We shall use an iterative sequence for the above procedure. Each step requires a suitable choice of parameters
$a$ and $r$ as above, as well as $t$ from \cref{subsec:lltconc}. The conditions of \cref{lem:AA'} impose
the following constraints. For a given constant $\beta\in(0,\frac{1}{10})$,
let us say that the triple $(a,r,t)$ of integers is
\textit{valid} (\textit{for} $\beta$) provided that
\begin{enumerate}[label=(\roman*)]
 \item $a\le r$,\label{valid:a_r}
 \item $a=ct^{3/2}n^{2\beta-1/2}$ for some $\tfrac12\le c\le 1$, and\label{valid:nat} 
 \item $r\le n^{-\beta}\sqrt{nt}$.\label{valid:r}
\end{enumerate}

\begin{remark}\label{rem:constraint}
In Lemmas~\ref{lem:BernsteinAppl} and~\ref{lem:markov} we already required the $c\le1$ condition in~\ref{valid:nat}
explicitly while the requirement that $t \le k/2$ follows for large $n$ from the $c\ge\frac12$ part of~\ref{valid:nat}
(as (i)--(iii) imply $ct\le n^{1-3\beta}$ while $k\ge \delta n$). We also note for later
that if $r^3n^{-2+5\beta}\le a\le r$ then one can set $t=\lceil (a^2n^{1-4\beta})^{1/3}\rceil$ to satisfy (i)--(iii)
when $n$ is large.
\end{remark}

\begin{lemma}[Difference Lemma]\label{lem:AA'}
 Let\/ $\beta\in(0,\frac{1}{10})$, $\eps>0$ and let\/ $(a,r,t)$ be valid for~$\beta$. Let\/ $S'$ and\/ $S$ be uniformly
 and independently chosen random subsets of\/ $[n]$ of sizes $k-t$ and\/ $k$ respectively.
 Suppose that there is a constant\/ $C$ such that w.v.h.p.\ in $G_{n,p}$ we have 
 \begin{equation}\label{boundedprob}
  \Prb_{S'}\big(e(S')\in [z,z+r)\big)\le\tfrac{Cr}{n}
 \end{equation}
 for all\/~$z$. Then w.v.h.p.\ in $G_{n,p}$ we have 
 \begin{equation}\label{eq:diff_conclusion}
  \big|\Prb_S\big(e(S)\in [z_1,z_1+a)\big)-\Prb_S\big(e(S)\in [z_2,z_2+a)\big)\big|\le an^{-1-\beta+\eps}
 \end{equation}
 whenever $|z_1-z_2|\le r$.
\end{lemma}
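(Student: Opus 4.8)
The plan is to compare $\Prb_S(e(S)\in[z_i,z_i+a))$ to the corresponding probability for the binomial random variable $Y$, via the decomposition $S=S'\cup T$ with $|S'|=k-t$ and $|T|=t$, and then to cancel the $Y$-terms. Writing $A_i=[z_i,z_i+a)$, I would first condition on $S'$ and use the telescoping identity $e(S)-e(S')=e(S',T)+e(T)$. For a fixed value $e(S')=y$, the event $e(S)\in A_i$ becomes $e(S',T)+e(T)\in A_i-y$, which is an interval of length $a$. Hence
\[
 \Prb_S\big(e(S)\in A_i\big)=\sum_{y}\Prb_{S'}\big(e(S')=y\big)\,\Prb_T\big(e(S',T)+e(T)\in A_i-y\mid e(S')=y, S'\big),
\]
where strictly speaking we average the inner conditional probability over $S'$ with $e(S')=y$. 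Now \cref{lem:markov} (applicable since $(a,r,t)$ being valid supplies exactly the hypotheses of \cref{lem:BernsteinAppl}: condition~\ref{valid:nat} gives $a=ct^{3/2}n^{2\beta-1/2}\le t^{3/2}n^{2\beta-1/2}$, i.e.\ \eqref{eq:bernstein_condition} with this $\beta$, and $t\le k/2$ for large $n$ by \cref{rem:constraint}) tells us that w.v.h.p.\ in $G_{n,p}$, for every interval $A$ simultaneously, all but an $e^{-n^\eps/4}$ proportion of the $(k-t)$-sets $S'$ satisfy
\[
 \big|\Prb_T\big(e(S',T)+e(T)\in A\big)-\Prb(Y\in A)\big|\le \Delta_A+e^{-n^\eps/2}.
\]
Call such $S'$ \emph{good}. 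The union-bound remark after \cref{thm:llt} lets us apply this to every shifted interval $A_i-y$ at once.

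Next I would split the sum over $y$ according to whether $S'$ is good. For bad $S'$ the total $S'$-probability is at most $e^{-n^\eps/4}$, contributing a negligible error of at most $e^{-n^\eps/4}$ to each $\Prb_S(e(S)\in A_i)$. For good $S'$, replace the inner probability by $\Prb(Y\in A_i-y)$ at the cost of $\Delta_{A_i-y}+e^{-n^\eps/2}$ per term; summing against the weights $\Prb_{S'}(e(S')=y)$ and using that there are at most $\binom n2$ values of $y$ gives a total replacement error of order
\[
 \sum_{y}\Prb_{S'}(e(S')=y)\,\Delta_{A_i-y}\ +\ e^{-n^\eps/2}.
\]
The key point is now to bound $\sum_y\Prb_{S'}(e(S')=y)\Delta_{A_i-y}$. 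The $y$'s for which $\Prb(Y\in A_i-y)\le e^{-n^\eps}/m$ contribute at most $e^{-n^\eps}/m$ each, hence at most $\binom n2 e^{-n^\eps}/m$ total, which is negligible; for the remaining $y$'s, $\Delta_{A_i-y}=n^{\beta+\eps}/m$, and here I would use hypothesis \eqref{boundedprob}. The $y$'s that matter are those for which $A_i-y$ meets the support of $Y$, i.e.\ $y\in[z_i-O(nt),z_i+a]$ roughly — more precisely $y$ lies in an interval of length $O(\Var(Y)^{1/2}\cdot\mathrm{polylog})=O(\sqrt{nt}\,\mathrm{polylog})$ around the bulk of $Y$ — so that the relevant $y$-range has length $O(\sqrt{nt})$ up to lower-order terms. (One has to be slightly careful: $A_i-y$ nonempty as $y$ ranges over all integers in $[0,\binom n2]$, but $\Prb(Y\in A_i-y)$ is tiny unless $y$ is within a few standard deviations of $z_i-\E Y$, so the first case of $\Delta$ absorbs the tails.) Covering this $y$-range by $O(\sqrt{nt}/r)$ blocks of length $r$ and applying \eqref{boundedprob} to each block, we get $\sum_y\Prb_{S'}(e(S')=y)\le O(\sqrt{nt}/r)\cdot Cr/n=O(\sqrt{nt}/\sqrt n\cdot\sqrt t)=O(\sqrt{t/n}\cdot\sqrt{t})$; more cleanly, the total $S'$-mass on an interval of length $O(\sqrt{nt})$ is $O(\sqrt{nt}/n)=O(\sqrt{t/n})$ by \eqref{boundedprob} (after covering by $\lceil\sqrt{nt}/r\rceil$ intervals of length $r$, valid since $r\le n^{-\beta}\sqrt{nt}\le\sqrt{nt}$). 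Hence the replacement error is
\[
 O\!\Big(\frac{\sqrt{t/n}\cdot n^{\beta+\eps}}{m}\Big)+ (\text{negligible}) = O\!\Big(\frac{t^{1/2}n^{\beta+\eps}}{n^{1/2}}\cdot\frac{t}{n}\Big)
\]
using $m=\Theta(n/t)$; simplifying, this is $O(t^{3/2}n^{\beta-3/2+\eps})$, and condition~\ref{valid:nat} gives $t^{3/2}=\Theta(a\,n^{1/2-2\beta})$, so the bound becomes $O(a\,n^{-1-\beta+\eps})$.

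Finally, after this replacement the $Y$-terms agree for $z_1$ and $z_2$ up to a shift: $\sum_y\Prb_{S'}(e(S')=y)\Prb(Y\in A_i-y)$ is just $\Prb(e(S')+Y\in A_i)$ where $Y$ is independent of $S'$ with $S'$ a $(k-t)$-set, and $|z_1-z_2|\le r$. But in fact we do not even need to estimate this overlap directly: the two expressions $\Prb(e(S')+Y\in[z_1,z_1+a))$ and $\Prb(e(S')+Y\in[z_2,z_2+a))$ are each a sum of $a$ consecutive point-probabilities of the random variable $e(S')+Y$ (over the choice of $S'$ and independent $Y$), and their difference telescopes into point-probabilities at the two ends; since $e(S')+Y$ has the same distribution as $e(S)$ restricted appropriately — more carefully, one compares both to $\Prb(Y'\in\cdot)$ where $Y'\sim\Bin(\binom k2 - \binom{k-t}2 + \cdots,p)$ — the difference is again controlled by \eqref{boundedprob} applied to the length-$r$ interval containing $[z_1,z_2]$: it is at most $Cr/n\cdot(\text{something})$... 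Actually the cleanest route is to observe directly that $\Prb(e(S')+Y\in[z,z+a))$, as a function of $z$, has increments bounded by the point probabilities of $e(S')+Y$, and $\sum$ of any $r$ consecutive such is $\le Cr/n$ by \eqref{boundedprob} (since $e(S')+Y$ and $e(S')$ have comparable local behaviour — this needs a short separate argument, or one reuses \cref{cor:initialbdd}-type bounds). Either way, summing the at most $|z_1-z_2|\le r$ telescoped terms against the trivial bound $Cr/n$ on the $S'$-mass yields a contribution of the same order $O(a\,n^{-1-\beta+\eps})$ once we use $a\le r$ and $r\le n^{-\beta}\sqrt{nt}$ to rebalance — in fact this term is dominated by the replacement error already bounded. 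Collecting the three error sources (bad $S'$: $e^{-n^\eps/4}$; the $\Delta$-replacement: $O(a\,n^{-1-\beta+\eps})$; the shift of $Y$-intervals: $O(a\,n^{-1-\beta+\eps})$) gives \eqref{eq:diff_conclusion} after absorbing constants into the exponent $\eps$.

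\textbf{Main obstacle.} The delicate part is the bound on $\sum_y\Prb_{S'}(e(S')=y)\,\Delta_{A_i-y}$: one must argue that only a window of $y$'s of length $O(\sqrt{nt}\cdot\mathrm{polylog})$ contributes the "otherwise" value $n^{\beta+\eps}/m$ of $\Delta$ (the rest falling into the exponentially small case), and then control the $S'$-mass of that window using only the crude hypothesis \eqref{boundedprob}. Getting the powers of $t$ and $n$ to combine, via the validity conditions \ref{valid:a_r}–\ref{valid:r}, into exactly $an^{-1-\beta+\eps}$ — with the choice $t=\Theta((a^2n^{1-4\beta})^{1/3})$ from \cref{rem:constraint} making \ref{valid:nat} tight — is where all the bookkeeping lives, and is the step I would expect to be most error-prone.
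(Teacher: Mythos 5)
The set-up — couple $S=S'\cup T$, condition on $S'$, apply \cref{lem:markov} to replace $\Prb_T$ by $\Prb(Y\in\cdot)$ up to $\Delta_{A_i-e(S')}+e^{-n^\eps/2}$, control the $\Delta$-replacement error by isolating a window $I$ of $e(S')$-values of length $O(\sqrt{nt}\,n^\eps)$ and bounding its $S'$-mass via \eqref{boundedprob} — is the same as the paper's, and your arithmetic for that piece comes out to $O(an^{-1-\beta+\eps})$ just as in the paper.

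There is, however, a genuine gap in the final step where you bound $\big|\Prb(e(S')+Y\in A_1)-\Prb(e(S')+Y\in A_2)\big|$ by telescoping into point probabilities of $e(S')+Y$. That quantity cannot be bounded directly by point-probability estimates of $e(S')+Y$ without losing the window factor. The two interval probabilities differ on at most $\min(a,|z_1-z_2|)$ points on each end. The best unconditional bound on a single point probability of $e(S')+Y$ is $\max_m\Prb(Y=m)=O((nt)^{-1/2})$ (from the first inequality of \cref{lem:bino}), so the telescope gives $O(a/\sqrt{nt})\le O(n^{-\beta})$, which is far larger than the target $O(an^{-1-\beta+\eps})$. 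Going via the bound $Cr/n$ on $r$ consecutive point probabilities makes matters worse: it gives $O(r/n)$. Your assertion that "this term is dominated by the replacement error already bounded" is not substantiated by any calculation, and it is in fact false with these estimates.

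What the paper does instead is not merge the $Y$-terms into $\Prb(e(S')+Y\in A_i)$ at all. It keeps the sum over $e$ and uses the \emph{second} inequality of \cref{lem:bino} (the Lipschitz bound on consecutive binomial point probabilities) to get, uniformly in $e$,
\[
 P_Y(e)=\big|\Prb(Y\in A_1-e)-\Prb(Y\in A_2-e)\big|\le\frac{\pi a|z_1-z_2|}{4\Var(Y)}\le\frac{\pi ar}{\delta^2 nt},
\]
and then multiplies this by $\Prb_{S'}(e(S')\in I)=O(\sqrt{nt}\cdot n^{\eps-1})$ (the same window confinement you already use for the $\Delta$-terms). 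The product is
\[
 O\!\Big(\sqrt{nt}\cdot n^{\eps-1}\Big)\cdot O\!\Big(\frac{ar}{nt}\Big)
 = O\!\Big(\frac{arn^{\eps-1}}{\sqrt{nt}}\Big)\le O\big(an^{-1-\beta+\eps}\big)
\]
using $r\le n^{-\beta}\sqrt{nt}$. The essential point you miss is that the window restriction $e\in I$ (and the associated $O(\sqrt{nt}/n)$ mass factor from \eqref{boundedprob}) must also be applied to the $Y$-difference term, not only to the $\Delta$-terms; once you pass to the convolution $e(S')+Y$ and telescope its point probabilities, this factor is already averaged away and cannot be recovered from \eqref{boundedprob}.
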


\begin{proof}
We can assume that $n$ is large, so that $(a,r,t)$ being valid for $\beta$ implies $t\le k/2$.
To rewrite the quantity we wish to bound, we couple our choices of $S$ and~$S'$:
generate a random set $S$ in parts by taking a random set $S'$ of size $k':= k-t$ as well as a random set
$T \subseteq [n]\setminus S'$ of size~$t$, and then defining $S:=S'\cup T$ (as in \cref{subsec:lltconc}).
Let  $A_1:=[z_1,z_1+a)$ and $A_2:=[z_2,z_2+a)$ and note that $|z_1-z_2|\le r$ by assumption.
We call $S'$ \emph{good} if \eqref{MarkovTinequ} holds for both $A_1$ and~$A_2$, otherwise $S'$ will be called \emph{bad}.
If we condition on $S'$ and $S'$ is good, then
\begin{align*}
 \big|\Prb(e(S)\in A_1\mid S')-&\Prb(e(S)\in A_2\mid S')\big|\\
 &\le\big|\Prb_T\big(e(S)-e(S') \in A_1-e(S')\big) - \Prb_Y\big(Y \in A_1-e(S')\big)\big|\\
 &\quad+\big|\Prb_T\big(e(S)-e(S') \in A_2-e(S')\big) - \Prb_Y\big(Y \in A_2-e(S')\big)\big|\\
 &\quad+\big|\Prb_Y\big(Y \in A_1-e(S')\big) - \Prb_Y\big(Y \in A_2-e(S')\big)|\\
 &\le \Delta_{A_1-e(S')}+\Delta_{A_2-e(S')} + 2e^{-n^{\eps}/2}+P_Y(e(S')),
\end{align*}
where $Y\sim\Bin(t(k-t)+\binom{t}{2},p)$ and
\[
 P_Y(e) := \big|\Prb\big(Y \in A_1-e\big) - \Prb\big(Y \in A_2-e\big)\big|.
\]
The conclusion of \cref{lem:markov} means that w.v.h.p.\ in $G_{n,p}$, $\Prb_{S'}(S'\text{ is bad})\le 2e^{-n^{\eps}/4}$, so
\begin{align} 
 \big|\Prb(e(S)\in A_1)&-\Prb(e(S)\in A_2)\big|\notag\\
 &\le \Prb(S'\text{ is bad})+\E_{S'}\big[\Delta_{A_1-e(S')}+\Delta_{A_2-e(S')} + 2e^{-n^{\eps}/2}+P_Y(e(S'))\big]\notag\\
 &\le 4e^{-n^{\eps}/4}+\E_{S'}\big[\Delta_{A_1-e(S')}+\Delta_{A_2-e(S')}+P_Y(e(S'))\big]\notag\\
 &\le 4e^{-n^\eps/4} + \sum_{0\le e\le \binom{k'}{2}}\Prb(e(S')=e)\big(\Delta_{A_1-e}+\Delta_{A_2-e} + P_Y(e) \big).
 \label{eq:sum_e}
\end{align}
Let $I$ be the set of $e$ such that there is a $z\in A_1\cup A_2$ with 
\begin{equation}\label{eq:contributing_e}
 |z-e-\E[Y]|\le\sqrt{kt}\,n^{\eps}.
\end{equation}
When $e \notin I$, we have exponential bounds for all terms of~\eqref{eq:sum_e}: using \cref{Bernstein}
we have for $i=1,2$ and sufficiently large~$n$
\begin{align*}
 \Prb\big(Y\in A_i-e\big)
 &\le \Prb\big(|Y-\E[Y]|>\sqrt{kt}\,n^{\eps}\big)\\
 &\le 2\exp\bigg(-\frac{ktn^{2\eps}}{2\big(\big((k-t)t+\binom{t}{2}\big)p(1-p)+\sqrt{kt}\,n^{\eps}/3\big)}\bigg)\\
 &\le 2\exp\bigg(-\frac{ktn^{2\eps}}{kt+\sqrt{kt}\,n^{\eps}}\bigg)\\
 &\le \exp(-n^\eps)/m,
\end{align*}
where $m=\lfloor\frac{n-k+t}{t}\rfloor$ as in \cref{lem:BernsteinAppl}. Thus we are in the first case of~\eqref{eq:Delta}, and so
$\Delta_{A_1-e},\Delta_{A_2-e}=\exp({-n^{\eps}})/m$, and also $P_Y(e)\le\exp(-n^{\eps})/m$. Hence
\[
 4e^{-n^\eps/4}+\sum_{e\notin I} \Prb(e(S')=e)\big(\Delta_{A_1-e}+\Delta_{A_2-e} + P_Y(e) \big)
 \le 4e^{-n^\eps/4}+3e^{-n^\eps}/m\le 5e^{-n^{\eps}/4}.
\]
We now turn to the contribution from the $e\in I$. For these $e$, in \eqref{eq:Delta} we have
$\Delta_{A_i-e}\le n^{\beta+\eps}/m$. In addition, \cref{lem:bino} gives a bound on $P_Y(e)$, so
that by \eqref{eq:varY} and the fact that $m\ge (n-k)/t\ge \delta n/t$ we have
\begin{align*}
 \Delta_{A_1-e}+\Delta_{A_2-e} + P_Y(e)
 &\le 2n^{\beta+\eps}/m+\frac{\pi a|z_2-z_1|}{4\Var(Y)}\le 2\delta^{-1}n^{\beta+\eps-1}t+\frac{\pi ar}{\delta^2 nt}\\
 &\le\tfrac{a}{\sqrt{nt}}\big(4\delta^{-1}n^{-\beta+\eps}+\pi\delta^{-2}n^{-\beta}\big)
 \le \tfrac{a}{\sqrt{nt}}\cdot 5\delta^{-1} n^{-\beta+\eps},
\end{align*}
where we have used conditions \ref{valid:nat} and~\ref{valid:r} and assumed $n$ sufficiently large.

To obtain a bound on the total weight of all $e \in I$, we first observe that, for such~$e$, the condition
in \eqref{eq:contributing_e} is in fact satisfied by one of $z_1$, $z_1+a$, $z_2$ or $z_2+a$. Indeed, when
$e+\E[Y]\notin A_1\cup A_2$, one of $z_1$, $z_1+a$, $z_2$, $z_2+a$ is at least as close to $e+\E[Y]$ as any
element of $A_1 \cup A_2$. Else, if $e+\E[Y]$ is an element of, say, $A_1$, the validity of $(a,r,t)$
(conditions~\ref{valid:a_r} and~\ref{valid:r}) implies that $a \le r\le \sqrt{nt} \le \sqrt{kt}\,n^{\eps}$,
so it follows that any $z \in A_1$, and in particular $z_1$, satisfies~\eqref{eq:contributing_e}. We thus get  
\begin{align*}
 \Prb_{S'}(e(S')\in I)&\le \sum_{z\in\{z_1,z_1+a,z_2,z_2+a\}}\Prb_{S'}\big(|z-e(S')-\E[Y]|\le \sqrt{kt}\,n^{\eps}\big)\\
 &\le 8C(2\sqrt{kt}\,n^\eps+1)/n=O(\sqrt{nt}\cdot n^{\eps-1}).
\end{align*}
For the last inequality, observe that~\eqref{boundedprob} implies that for all $r'\ge r$,
$\Prb(e(S') \subseteq[z,z+r'))\le 2Cr'/n$ and that this can be applied as $r\le \sqrt{kt}\,n^{\eps}$.

Putting all of this together and using that $(a,r,t)$ is valid (conditions~\ref{valid:nat} and~\ref{valid:r})
in the penultimate step, we obtain
\begin{align*}
 |\Prb(e(S)\in A_1)-\Prb(e(S) \in A_2)|
 &\le 5e^{-n^{\eps}/4}+\sum_{e\in I}\Prb(e(S')=e)\big(\Delta_{A_1-e}+\Delta_{A_2-e} + P_Y(e) \big)\\
 &= 5e^{-n^{\eps}/4}+O(\sqrt{nt}\cdot n^{\eps-1})\cdot \tfrac{a}{\sqrt{nt}}\cdot 5\delta^{-1} n^{-\beta+\eps}\\
 &=O(an^{-\beta-1+2\eps}).
\end{align*}
As this holds for any $\eps>0$, the result follows for sufficiently large $n$ on replacing $\eps$ by $\eps/3$, say.
\end{proof}

We now observe that the conclusion of \cref{lem:AA'} yields~\eqref{boundedprob} for the smaller interval of size~$a$,
which will allow us to kick off the next iteration step. 

\begin{corollary}\label{cor:generaliter}
 Under the notation and assumptions of \cref{lem:AA'}, there is a constant $C'>0$ such that w.v.h.p.\ in $G_{n,p}$
 we also have $\Prb\big(e(S)\in [z,z+a)\big) \le \frac{C'a}{n}$ for all\/~$z$.
\end{corollary}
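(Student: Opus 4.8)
The plan is to derive this directly from the conclusion of \cref{lem:AA'} by a covering argument. We know from \eqref{eq:diff_conclusion} that w.v.h.p.\ in $G_{n,p}$, for all $z_1,z_2$ with $|z_1-z_2|\le r$, the weights of the length-$a$ intervals starting at $z_1$ and $z_2$ differ by at most $an^{-1-\beta+\eps}$. The idea is to bound $\Prb_S(e(S)\in[z,z+a))$ by comparing it with an \emph{average} of weights of length-$a$ subintervals inside a fixed length-$r$ window containing $[z,z+a)$, and then bound that average using hypothesis~\eqref{boundedprob}, which controls the weight of any length-$r$ interval by $Cr/n$.

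In more detail, first I would fix $z$ and choose a window $[w,w+r)$ with $w\le z$ and $z+a\le w+r$ (possible since $a\le r$ by validity condition~\ref{valid:a_r}, assuming $n$ large). For each offset $j$ with $0\le j\le r-a$, the interval $[w+j,w+j+a)$ lies within $[w,w+r)$, and by \eqref{eq:diff_conclusion} its weight is at least $\Prb_S(e(S)\in[z,z+a))-an^{-1-\beta+\eps}$ (since $|(w+j)-z|\le r$). Averaging over all $r-a+1$ values of $j$, and noting that $\sum_{j=0}^{r-a}\Prb_S(e(S)\in[w+j,w+j+a))\le a\cdot\Prb_S(e(S)\in[w,w+r))\le aCr/n$ because each integer point of $[w,w+r)$ is counted in at most $a$ of the shifted intervals, we get
\[
 \Prb_S\big(e(S)\in[z,z+a)\big)-an^{-1-\beta+\eps}\le\frac{1}{r-a+1}\cdot\frac{aCr}{n}.
\]
Since $a\le r$, the validity conditions give $r-a+1\ge r/2$ for large $n$ (or one can simply split into the cases $a\le r/2$ and $a>r/2$, the latter meaning $r=\Theta(a)$ and the bound being immediate from \eqref{boundedprob} directly), so $\frac{aCr}{(r-a+1)n}\le \frac{2Ca}{n}$. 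Hence $\Prb_S(e(S)\in[z,z+a))\le 2Ca/n+an^{-1-\beta+\eps}\le C'a/n$ for a suitable constant $C'$, using $\beta>0$ so that $n^{-\beta+\eps}$ is bounded (after shrinking $\eps$, which we may since \eqref{eq:diff_conclusion} holds for all $\eps>0$).

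There is essentially no serious obstacle here; the only mild care needed is handling the edge case where $r$ and $a$ are of the same order (so the averaging window has few shifts), which is dealt with by applying \eqref{boundedprob} directly, and ensuring the "w.v.h.p." event is the same one as in \cref{lem:AA'} (it is — we condition on the graph satisfying both \eqref{boundedprob} and \eqref{eq:diff_conclusion}, which happens w.v.h.p.). One should also note that the statement is claimed for all $z$ simultaneously, which follows from the usual union bound remark since the underlying events hold w.v.h.p.
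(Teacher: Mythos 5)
Your proposal is correct and takes essentially the same route as the paper. The paper's own proof tiles $[z,z+r)$ by a maximal family $\cI$ of disjoint length-$a$ intervals, uses \eqref{eq:diff_conclusion} (with $\eps<\beta$ so the difference is at most $a/n$) to say each member of $\cI$ has weight close to $\Prb_S(e(S)\in[z,z+a))$, and then derives a contradiction with the $Cr/n$ bound on the length-$r$ interval if $\Prb_S(e(S)\in[z,z+a))>C'a/n$ with $C'=2C+1$; you instead sum over all $r-a+1$ shifts and account for the overlap factor $a$, which gives the same inequality up to constants. This is a purely cosmetic difference.

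One small point worth being aware of (it affects the paper's write-up just as much as yours): the bound you apply to the long interval, $\Prb_S(e(S)\in[w,w+r))\le Cr/n$, is for $S$ of size~$k$, whereas hypothesis \eqref{boundedprob} is stated for $S'$ of size~$k-t$. The paper's proof has the same slip (it cites $\Prb(e(S')\in[z,z+r))\le Cr/n$ in the text but uses $\Prb(e(S)\in[z,z+r))$ in the display). In the actual application inside the proof of \cref{lem:smoothing} this is harmless, since the inductive hypothesis \eqref{eq:alt1} at level $j+1$ is stated for \emph{all} set sizes in $[k-T_{j+1},k]$, hence applies to both $S$ and $S'$; but read literally as a standalone corollary, one either needs to assume the length-$r$ bound for $S$ as well, or transfer it from $S'$ to $S$ via \cref{lem:markov}.
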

\begin{proof}
We look at $[z,z+a)$ for any fixed~$z$. Consider the interval $[z,z+r)$, and let $\cI$ be a maximal collection of disjoint
intervals of length $a$ in $[z,z+r)$, so that $|\cI|= \lfloor r/a\rfloor\ge r/2a$. W.v.h.p., for each $A\in\cI$, we have by
\cref{lem:AA'} that $|\Prb(e(S)\in [z,z+a))-\Prb(e(S)\in A)| \le a/n$ (as without loss of generality $\eps<\beta$).
Thus this assertion and
$\Prb(e(S')\in [z,z+r))\le \frac{Cr}{n}$ hold at the same time w.v.h.p.\ and they imply
$\Prb(e(S)\in [z,z+a)) \le \frac{C'a}{n}$ for $C'=2C+1$. Indeed, supposing otherwise gives 
\[
 \Prb\big(e(S) \in [z,z+r)\big) \ge  \sum_{A\in \cI} \Prb(e(S)\in A)
  > \left(\tfrac{C'a}{n} -\tfrac{a}{n}\right) \cdot \tfrac{r}{2a}=\tfrac{Cr}{n},
\]
which contradicts our assumption.
\end{proof}

We are now ready to prove the main smoothing lemma.

\begin{proof}[Proof of \cref{lem:smoothing}]
Set $a_0=1$ and inductively define for $j\ge0$,
$t_j=\lceil (a_j^2 n^{1-4\beta})^{1/3}\rceil$ and $a_{j+1}=\lfloor (a_jn^{2-5\beta})^{1/3}\rfloor$.
It can be easily verified that for all~$j$, $(a_j,a_{j+1},t_j)$ is valid for~$\beta$.
Indeed, $a_j$ will be (just less than) $n^{(1-5\beta/2)(1-3^{-j})}$, so
$a_j\le a_{j+1}$. Also $n^{-\beta}\sqrt{nt_j}\ge (a_jn^{2-5\beta})^{1/3} \ge  a_{j+1}$,
and $a_j\le t_j^{3/2}n^{2\beta-1/2}=(1+o(1))a_j$. 
Also, there exists some~$j_0$, depending only on~$\eps$,
such that for sufficiently large~$n$, $a_{j_0}\in [n^{1-5\beta/2-\eps},n^{1-5\beta/2}]$.
This will be our~$a$.

Now let $T_j=\sum_{i=0}^{j-1} t_i$ and note that as $j_0$ is a constant and $t_j\le \lceil n^{1-3\beta}\rceil$,
we have that for sufficiently large $n$ that $T_{j_0+1}\le k/2$. We now prove by reverse induction on $j$ that
there is a constant $C_j$ such that for any $k'\in [k-T_j,k]$, if we choose a set $S$ of size $k'$ uniformly at
random, then w.v.h.p.\ in $G_{n,p}$ we have for all~$z$,
\begin{equation}\label{eq:alt1}
 \Prb_S\big(e(S)\in [z,z+a_j)\big)\le C_ja_j/n
\end{equation}
and (for $j<j_0$) 
\begin{equation}\label{eq:alt2}
 \big|a_j^{-1}\Prb_S\big(e(S)\in[z,z+a_j)\big)-a_{j+1}^{-1}\Prb_S\big(e(S)\in[z,z+a_{j+1})\big)\big|\le n^{-1-\beta+\eps}.
\end{equation}
Indeed \eqref{eq:alt1} holds for $j=j_0$ by \cref{thm:clt} provided $a_{j_0}\ge n^{3/4+\eps'}$ for some $\eps'>0$,
and this holds as $1-5\beta/2-\eps>3/4$. Then for $j=j_0-1,\dots,0$ in turn, if $S$ is a uniformly chosen set
of size $k'\in[k-T_j,k]$ and $S'$ is a uniformly chosen set of size $k'-t_j\in[k-T_{j+1},k]$, then
\eqref{eq:alt1} applied with $S$ replaced with $S'$ and $j$ replaced with $j+1$ implies \eqref{eq:alt2} by \cref{lem:AA'}.
\cref{cor:generaliter} then implies \eqref{eq:alt1} for $S$ and~$j$.

Hence we have for all $z$, and $S$ uniformly random of size~$k$,
\[
 \big|\Prb\big(e(S)=z\big)-a_{j_0}^{-1}\Prb\big(e(S)\in[z,z+a_{j_0})\big)\big|
 \le j_0n^{-1-\beta+\eps}\le n^{-1-\beta+2\eps}.\qedhere
\]
\end{proof}

\subsection{Proof of Theorem~\ref{thm:llt}}\label{subsec:lltpf}

Combining the control we have from \cref{lem:smoothing}, which relates single-point weights to weights of long intervals,
with \cref{cor:clt} which gives us the weight of long intervals, we can now obtain the single-point weights up to a small
error term as desired.

\begin{proof}[Proof of \cref{thm:llt}]
\cref{lem:smoothing} states that w.v.h.p.\ $G_{n,p}$ satisfies
\[
 \big|\Prb\big(e(S)=z\big) - a^{-1}\Prb\big(e(S)\in [z,z+a)\big)\big| \le n^{-1-\beta+\eps},
\]
for all~$z$, where $a$ is some integer satisfying 
\[
 n^{1-5\beta/2-\eps}\le a\le n^{1-5\beta/2}.
\]
Setting $p=M/N$, there is a polynomial probability (on the order of $1/n$) that
$e(G_{n,p})=M$. In that case we have $(G_{n,p}\mid e(G_{n,p})=M)\sim G_{n,M}$, so the conclusion of \cref{lem:smoothing}
transfers to $G_{n,M}$ again with very small failure probability. Using this in the first step and \cref{cor:clt}
in the second, and with $Z$ and $\phi$ as in the theorem statement, we obtain
\begin{align*} 
 \Prb\big(e(S)=z\big)
 &=a^{-1}\Prb\big(e(S) \in [z,z+a)\big)+O(n^{-1-\beta+\eps})\\
 &=a^{-1}\Prb\big(Z\in [z,z+a)\big) + O(a^{-1}n^{-1/4+\eps}+n^{-1-\beta+\eps})\\
 &=a^{-1}\int_z^{z+a} \varphi(t)\,dt + O(a^{-1}n^{-1/4+\eps}+n^{-1-\beta+\eps})\\
 &=\varphi(z)+O(a/n^2+a^{-1}n^{-1/4+\eps}+n^{-1-\beta+\eps})\\
 &=\varphi(z)+O(n^{-1-5\beta/2}+n^{-5/4+5\beta/2+2\eps}+n^{-1-\beta+\eps}),
\end{align*}
where the penultimate step follows from \cref{lem:normal} and the fact that $\Var(Z)=\Theta(n^2)$.
To optimise the exponent, we take $\beta=1/14$, giving
\[
  \Prb(e(S)=z)=\varphi(z)+O(n^{-1-1/14+2\eps}).\qedhere
\]
\end{proof}

\bibliographystyle{abbrv}
\bibliography{limittheorems}

\appendix

\section{Technical tools}\label{sec:technical}

In this section, we record several technical results that will be used in proofs of our main theorems.
While none of them are new, we provide proofs for completeness in cases where a convenient reference is lacking. 

We start with two concentration inequalities, namely Bernstein's inequality and a version of the Azuma--Hoeffding
inequality with a relaxed difference bound condition.

\begin{theorem}[Bernstein~\cite{Bernstein}]\label{Bernstein}
 Let\/ $X_1,\dots,X_n$ be independent random variables taking values in $[0,1]$ and let\/ $X:=\sum_{i=1}^nX_i$
 and\/ $\sigma^2:=\Var(X)=\sum_{i=1}^n\Var(X_i)$. Then for any $t\ge 0$ we have
 \[
  \Prb\big(|X-\E[X]|>t\big)\le2\exp\bigg(-\frac{t^2}{2(\sigma^2+t/3)}\bigg).
 \]
\end{theorem}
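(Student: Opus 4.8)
The plan is to use the standard exponential-moment (Chernoff) method, tuning the free parameter at the very end to produce the Bernstein denominator $\sigma^2+t/3$. We may assume $t>0$, and also $\sigma^2>0$ (if $\sigma^2=0$ then $X$ is almost surely constant and the claim is immediate). Put $Y_i:=X_i-\E[X_i]$; then $\E[Y_i]=0$, and since $X_i\in[0,1]$ forces $\E[X_i]\in[0,1]$ we have $|Y_i|\le1$. Write $\sigma_i^2:=\Var(X_i)=\E[Y_i^2]$, so that $\sum_{i=1}^n\sigma_i^2=\sigma^2$.

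The first step is to bound the moment generating function of each $Y_i$. For $0\le\theta<3$, expanding $e^{\theta Y_i}$, using $\E[Y_i]=0$ to kill the linear term, and using the pointwise inequality $|Y_i|^j\le Y_i^2$ for $j\ge2$ (valid as $|Y_i|\le1$), gives $\E[e^{\theta Y_i}]\le1+\E[Y_i^2]\sum_{j\ge2}\theta^j/j!$. The elementary bound $j!\ge2\cdot3^{j-2}$ for $j\ge2$ (immediate by induction) converts the remaining series into a geometric one:
\[
 \sum_{j\ge2}\frac{\theta^j}{j!}\le\frac{\theta^2}{2}\sum_{j\ge2}\Big(\frac{\theta}{3}\Big)^{j-2}=\frac{\theta^2}{2(1-\theta/3)}.
\]
Combining with $1+x\le e^x$ yields $\E[e^{\theta Y_i}]\le\exp\big(\sigma_i^2\theta^2/(2(1-\theta/3))\big)$, and hence by independence
\[
 \E\big[e^{\theta(X-\E[X])}\big]\le\exp\Big(\frac{\sigma^2\theta^2}{2(1-\theta/3)}\Big)\qquad\text{for all }\theta\in[0,3).
\]

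Next I would apply Markov's inequality in the form $\Prb(X-\E[X]>t)\le e^{-\theta t}\,\E[e^{\theta(X-\E[X])}]$ and make the choice $\theta:=t/(\sigma^2+t/3)$, which lies in $(0,3)$ for every $t>0$. For this $\theta$ one has $1-\theta/3=\sigma^2/(\sigma^2+t/3)$, so the exponent collapses to
\[
 \frac{\sigma^2\theta^2}{2(1-\theta/3)}-\theta t=\frac{t^2}{2(\sigma^2+t/3)}-\frac{t^2}{\sigma^2+t/3}=-\frac{t^2}{2(\sigma^2+t/3)},
\]
giving the upper-tail bound. Since the MGF estimate above used only $\E[Y_i]=0$ and $|Y_i|\le1$, it holds verbatim with every $Y_i$ replaced by $-Y_i$, i.e.\ with $X$ replaced by $-X$; this yields the matching lower-tail bound, and a union bound over the two tails supplies the factor $2$.

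There is no real obstacle here, as this is a classical estimate; the only point needing care is securing the sharp constant $1/3$ in the denominator. This is exactly what the coefficient bound $j!\ge2\cdot3^{j-2}$ and the precisely matched choice $\theta=t/(\sigma^2+t/3)$ deliver — a looser bound on the Taylor coefficients (or a suboptimal $\theta$) would still give a Bernstein-type inequality, but with a worse constant in place of $1/3$.
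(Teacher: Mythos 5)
Your argument is correct and is the standard Cramér--Chernoff proof of Bernstein's inequality, with the usual device of bounding the factorial by $j!\ge 2\cdot 3^{j-2}$ to obtain the geometric series and then picking the optimal $\theta=t/(\sigma^2+t/3)$. Note, though, that the paper does not prove this statement at all: it is quoted in the appendix with a citation to the reference \cite{Bernstein}, and the appendix explicitly says proofs are supplied only ``in cases where a convenient reference is lacking,'' which is not the case here. So there is nothing in the paper to compare against; your proof simply fills in a standard, omitted argument, and it does so correctly (including the minor points one needs to check: $|Y_i|\le 1$ follows from $X_i,\E[X_i]\in[0,1]$, the chosen $\theta$ lies in $(0,3)$ whenever $t,\sigma^2>0$, and the degenerate case $\sigma^2=0$ is handled separately).
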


\begin{lemma}[Azuma--Hoeffding, see e.g.\ \cite{tao2015random}, Proposition 34]\label{AH}
 Let\/ $X=X(Z_1,\dots,Z_n)$ be a random variable depending on independent variables
 $Z_1,\dots,Z_n$ and suppose $c_1,\dots,c_n>0$ are such that
 \[
  \Prb\big(\exists i\colon\big|\E[X\mid Z_1,\dots Z_i]-\E[X\mid Z_1,\dots Z_{i-1}]\big|>c_i\big)\le\eps.
 \]
 Then
 \[
  \Prb\big(|X-\E[X]|\ge t\big)\le\eps+2\exp\big(-t^2/2\sigma^2\big),
 \]
 where $\sigma^2:=\sum_{i=1}^n c_i^2$.
\end{lemma}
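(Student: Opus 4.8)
The plan is to prove the Azuma--Hoeffding inequality with the relaxed difference bound (\cref{AH}) by a truncation argument that reduces it to the standard Azuma--Hoeffding inequality. Write $D_i:=\E[X\mid Z_1,\dots,Z_i]-\E[X\mid Z_1,\dots,Z_{i-1}]$ for the martingale differences of the Doob martingale $M_i:=\E[X\mid Z_1,\dots,Z_i]$, so that $M_0=\E[X]$, $M_n=X$, and $X-\E[X]=\sum_{i=1}^n D_i$. The hypothesis says precisely that the ``bad'' event $B:=\{\exists i:|D_i|>c_i\}$ has $\Prb(B)\le\eps$. On the complement of $B$ we have $|D_i|\le c_i$ for every $i$, so the obvious idea is to run the standard martingale concentration bound on a modified process that agrees with $M_i$ off $B$ but has bounded increments everywhere.

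First I would construct a stopping time $\tau:=\min\{i:|D_i|>c_i\}$ (with $\tau=\infty$ if no such $i$ exists), which is a stopping time with respect to the filtration $\cF_i:=\sigma(Z_1,\dots,Z_i)$ since the event $\{|D_i|>c_i\}$ is $\cF_i$-measurable. Then define the stopped martingale $\widetilde M_i:=M_{i\wedge\tau}$, or equivalently $\widetilde M_i:=\E[X]+\sum_{j=1}^i D_j\one_{\{\tau\ge j\}}$. One checks that $(\widetilde M_i)$ is again a martingale with respect to $(\cF_i)$: its increments are $\widetilde M_i-\widetilde M_{i-1}=D_i\one_{\{\tau\ge i\}}$, the event $\{\tau\ge i\}=\{\tau\le i-1\}^c$ is $\cF_{i-1}$-measurable, and $\E[D_i\mid\cF_{i-1}]=0$, so $\E[\widetilde M_i-\widetilde M_{i-1}\mid\cF_{i-1}]=0$. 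Crucially, on the event $\{\tau\ge i\}$ we have $i-1<\tau$, hence $|D_i|\le c_i$ by definition of $\tau$, so the increments satisfy $|\widetilde M_i-\widetilde M_{i-1}|\le c_i$ deterministically. The standard Azuma--Hoeffding inequality (which I would either cite or prove in one line via the Hoeffding lemma $\E[e^{sD}\mid\cF_{i-1}]\le e^{s^2c_i^2/2}$ for a mean-zero variable bounded by $c_i$ in absolute value, followed by a Chernoff bound) then gives $\Prb(|\widetilde M_n-\E[X]|\ge t)\le 2\exp(-t^2/2\sigma^2)$ with $\sigma^2=\sum_i c_i^2$.

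To finish, note that on $B^c$ we have $\tau=\infty$, hence $\widetilde M_n=M_n=X$, so
\[
 \Prb\big(|X-\E[X]|\ge t\big)\le \Prb(B)+\Prb\big(|X-\E[X]|\ge t,\,B^c\big)\le \eps+\Prb\big(|\widetilde M_n-\E[X]|\ge t\big)\le \eps+2\exp\big(-t^2/2\sigma^2\big),
\]
which is exactly the claimed bound. I do not expect any serious obstacle here: the only points requiring a little care are verifying that $\tau$ is a genuine stopping time (so that the stopped process is still a martingale and the Hoeffding-lemma conditioning argument applies at each step) and checking that truncating via $\tau$ rather than truncating each $D_i$ individually is what keeps the increments mean-zero — truncating each $D_i$ separately would destroy the martingale property, whereas stopping the whole process does not. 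If one prefers to avoid even the word ``stopping time'', the same proof can be phrased purely in terms of the $\cF_{i-1}$-measurable indicators $\one_{\{\tau\ge i\}}=\prod_{j<i}\one_{\{|D_j|\le c_j\}}$, which is the form I would write up.
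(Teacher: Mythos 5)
The paper itself does not prove this lemma; it simply cites \cite{tao2015random}, so you are supplying a proof that the paper omits. Unfortunately, your stopping-time argument has an off-by-one gap exactly at the step where you claim bounded increments.

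You set $\tau := \min\{i : |D_i| > c_i\}$, form the stopped martingale $\widetilde{M}_i := M_{i \wedge \tau}$, and correctly identify the increments as $\widetilde{M}_i - \widetilde{M}_{i-1} = D_i \one_{\{\tau \ge i\}}$. The martingale-property check is fine, since $\{\tau \ge i\} = \{\tau \le i-1\}^c$ is $\cF_{i-1}$-measurable. But the claim that $\{\tau \ge i\}$ forces $|D_i| \le c_i$ is false: $\{\tau \ge i\}$ is the same as $\{\tau > i-1\}$, which by definition of $\tau$ only guarantees $|D_j| \le c_j$ for $j \le i-1$ and says nothing about $|D_i|$. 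In particular $\{\tau = i\} \subseteq \{\tau \ge i\}$, and on $\{\tau = i\}$ the stopped process's $i$-th increment equals $D_i$ with $|D_i| > c_i$. So $\widetilde{M}$ does not have increments bounded by $c_i$, and the Hoeffding-lemma step (hence the standard Azuma bound you invoke) does not apply.

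The obvious repair --- stop one step earlier, replacing $\one_{\{\tau \ge i\}}$ by $\one_{\{\tau > i\}}$ --- fails for the opposite reason: $\{\tau > i\}$ depends on $|D_i|$ and is therefore $\cF_i$-measurable, not $\cF_{i-1}$-measurable. One can only extract the $\cF_{i-1}$-measurable factor $\one_{\{\tau > i-1\}}$, leaving
\[
 \E\big[D_i \one_{\{\tau > i\}} \mid \cF_{i-1}\big] = \one_{\{\tau > i-1\}}\, \E\big[D_i \one_{\{|D_i|\le c_i\}} \mid \cF_{i-1}\big],
\]
which is generally nonzero, so the martingale property is lost. This is precisely what makes the lemma non-trivial: the events $\{|D_i| > c_i\}$ are not predictable with respect to the filtration, so a naive stop-or-truncate manoeuvre must choose between bounded increments and the martingale property and cannot have both. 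A correct proof must account for the bias introduced by truncation --- for example by re-centring the truncated differences $D_i \one_{\{|D_i| \le c_i\}}$ at the cost of a constant factor in the exponent --- or run a more careful argument as in the cited reference.
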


The following is more general that we need as we will only be applying it in the case when $X$ is a Binomial random variable.
Nevertheless, we state it here in greater generality as the proof is no harder.

\begin{lemma}\label{lem:bino}
 Suppose $X=\sum_{i=1}^nX_i$ where $X_1,\dots,X_n$ are independent Bernoulli random variables. Then, for any $m,m'\in\Z$,
 \[
  \Prb(X=m)\le \sqrt{\frac{\pi}{8\Var(X)}}\quad
  \text{and}\quad\big|\Prb(X=m')-\Prb(X=m)\big|\le \frac{\pi}{4\Var(X)}|m'-m|.
 \]
\end{lemma}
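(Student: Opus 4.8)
\textbf{Proof proposal for \cref{lem:bino}.}
The plan is to prove both inequalities via a single estimate on the ``local probabilities'' of a sum of independent Bernoullis, using the standard trick of symmetrising by an independent copy. Write $X=\sum_{i=1}^n X_i$ with $X_i\sim\mathrm{Bernoulli}(p_i)$, and let $X^*=\sum_{i=1}^n X_i^*$ be an independent copy. The key quantity to control is the characteristic function $\phi(\theta)=\E[e^{i\theta X}]=\prod_i(1-p_i+p_ie^{i\theta})$; note $|\phi(\theta)|^2=\prod_i(1-2p_i(1-p_i)(1-\cos\theta))$. Using $1-u\le e^{-u}$ and $1-\cos\theta\ge \tfrac{2}{\pi^2}\theta^2$ for $|\theta|\le\pi$, we get $|\phi(\theta)|^2\le \exp\!\big(-\tfrac{4}{\pi^2}\sigma^2\theta^2\big)$ on $[-\pi,\pi]$, where $\sigma^2=\Var(X)=\sum_i p_i(1-p_i)$.

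First I would prove the pointwise bound. By Fourier inversion for integer-valued random variables, for any $m\in\Z$,
\[
 \Prb(X=m)=\frac{1}{2\pi}\int_{-\pi}^{\pi}\phi(\theta)e^{-im\theta}\,d\theta,
\]
so $\Prb(X=m)\le\frac{1}{2\pi}\int_{-\pi}^{\pi}|\phi(\theta)|\,d\theta\le\frac{1}{2\pi}\int_{-\infty}^{\infty}e^{-\frac{2}{\pi^2}\sigma^2\theta^2}\,d\theta=\frac{1}{2\pi}\cdot\pi\sqrt{\frac{\pi}{2\sigma^2}}=\sqrt{\frac{\pi}{8\sigma^2}}$, which is the first claimed inequality. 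For the second inequality, apply the same inversion formula to $\Prb(X=m')-\Prb(X=m)=\frac{1}{2\pi}\int_{-\pi}^{\pi}\phi(\theta)(e^{-im'\theta}-e^{-im\theta})\,d\theta$ and bound $|e^{-im'\theta}-e^{-im\theta}|\le |m'-m|\,|\theta|$; then
\[
 \big|\Prb(X=m')-\Prb(X=m)\big|\le\frac{|m'-m|}{2\pi}\int_{-\pi}^{\pi}|\theta|\,e^{-\frac{2}{\pi^2}\sigma^2\theta^2}\,d\theta
 \le\frac{|m'-m|}{2\pi}\int_{-\infty}^{\infty}|\theta|\,e^{-\frac{2}{\pi^2}\sigma^2\theta^2}\,d\theta=\frac{|m'-m|}{2\pi}\cdot\frac{\pi^2}{2\sigma^2}=\frac{\pi|m'-m|}{4\sigma^2},
\]
giving the second inequality.

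I expect the only real subtlety to be pinning down the constants in the elementary inequality $1-\cos\theta\ge\frac{2}{\pi^2}\theta^2$ on $[-\pi,\pi]$ and in the resulting Gaussian integrals, so that the bounds come out exactly as $\sqrt{\pi/(8\Var(X))}$ and $\frac{\pi}{4\Var(X)}|m'-m|$ rather than with some larger absolute constant. One could alternatively phrase everything through the symmetrised variable $X-X^*$ (whose characteristic function is $|\phi(\theta)|^2$) and a Cauchy--Schwarz step, but the direct inversion above is cleaner and keeps the constants transparent. A completely different route — avoiding Fourier analysis — would be to use the fact that a sum of independent Bernoullis is unimodal and to compare consecutive ratios $\Prb(X=m+1)/\Prb(X=m)$, but turning unimodality plus a variance bound into the sharp constant $\sqrt{\pi/8}$ is actually more awkward than the characteristic-function computation, so I would stick with the Fourier approach.
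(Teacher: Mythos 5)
Your proposal is correct and follows essentially the same route as the paper: Fourier inversion for integer-valued random variables, the factorisation of the characteristic function over the independent Bernoullis, the elementary bound $|1-p+pe^{i\theta}|^2 = 1-2p(1-p)(1-\cos\theta) \le e^{-4p(1-p)\theta^2/\pi^2}$ on $[-\pi,\pi]$ (the paper writes it via $\sin^2(\theta/2)$, which is the same thing), and then extending the integral to all of $\R$ to evaluate the Gaussian and Gaussian-times-$|\theta|$ integrals. The opening mention of symmetrising by an independent copy is a red herring in your write-up — you never actually use $X^*$, and neither does the paper — but the argument that follows is exactly the intended one with the correct constants.
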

\begin{proof}
For $p\in[0,1]$ and $\theta\in[-\pi,\pi]$ we have the inequality
\[
 \big|(1-p)+pe^{i\theta}\big|^2=(1-p)^2+p^2+2p(1-p)\cos\theta
 =1-4p(1-p)\sin^2\tfrac{\theta}{2}\le e^{-4p(1-p)\theta^2/\pi^2},
\]
where we used $|\sin\phi|\ge \frac{2}{\pi}|\phi|$ for $|\phi|\le \frac{\pi}{2}$
and $1-x\le e^{-x}$ for all $x\in\R$. Now, as the $X_i$ are independent,
\[
 \Prb(X=m)=\E[\one_{\{X=m\}}]=\E\frac{1}{2\pi}\int_{-\pi}^{\pi} e^{i\theta (X-m)}\,d\theta
 =\frac{1}{2\pi}\int_{-\pi}^{\pi}e^{-im\theta}\prod_j\E e^{i\theta X_j}\,d\theta.
\]
Hence, writing $p_j:=\Prb(X_j=1)$ and $\sigma^2:=\Var(X)=\sum p_j(1-p_j)$,
\begin{align*}
 \Prb(X=m)
 &=\frac{1}{2\pi}\int_{-\pi}^{\pi}e^{-im\theta}\prod_j\big((1-p_j)+p_je^{i\theta}\big)\,d\theta
  \le\frac{1}{2\pi}\int_{-\pi}^{\pi}\prod_je^{-2p_j(1-p_j)\theta^2/\pi^2}\,d\theta\\
 &\le\frac{1}{2\pi}\int_{-\infty}^\infty e^{-2\sigma^2\theta^2/\pi^2}\,d\theta
  =\frac{1}{2\pi}\cdot\sqrt{\frac{\pi}{2\sigma^2/\pi^2}}=\sqrt{\frac{\pi}{8\sigma^2}}.
\end{align*}
For the second inequality, we have
\[
 \Prb(X=m')-\Prb(X=m)=
 \frac{1}{2\pi}\int_{-\pi}^{\pi}(e^{-im'\theta}-e^{-im\theta})\prod_j\E e^{i\theta X_j}\,d\theta.
\]
Thus, as $|e^{-im'\theta}-e^{-im\theta}|\le|(m'-m)\theta|$,
\begin{align*}
 |\Prb(X=m')-\Prb(X=m)|
 &\le \frac{|m'-m|}{2\pi}\int_{-\pi}^{\pi}|\theta|\prod_je^{-2p_j(1-p_j)\theta^2/\pi^2}\,d\theta\\
 &\le \frac{|m'-m|}{\pi}\int_0^\infty \theta e^{-2\sigma^2\theta^2/\pi^2}\,d\theta\\
 &=\frac{|m'-m|}{\pi}\cdot \frac{\pi^2}{4\sigma^2}=\frac{\pi}{4\sigma^2}|m'-m|,
\end{align*}
as desired.
\end{proof}

\begin{remark} The constants in \cref{lem:bino} are not best possible, however it can be
seen by taking approximations to suitable Poisson distributions that we can't replace them with the
corresponding constants for the normal distribution given by the following.
\end{remark}

\begin{lemma}\label{lem:normal}
 If\/ $\varphi$ is a density function of a normal random variable with mean $\mu$ and variance~$\sigma^2$
 then, for all\/ $x,y\in\R$ we have
 \[
  \varphi(x)\le \frac{1}{\sigma \sqrt{2\pi}}\qquad\text{and}\qquad
  |\varphi(x)-\varphi(y)|\le\frac{|x-y|}{\sigma^2\sqrt{2\pi e}}.
 \]
\end{lemma}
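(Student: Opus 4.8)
The plan is to reduce the first bound to the elementary fact that a Gaussian density is maximised at its mean, and the second to the mean value theorem together with an explicit one-variable maximisation of the derivative.

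For the first inequality, I would simply write $\varphi(x)=\frac{1}{\sigma\sqrt{2\pi}}\exp\!\big(-(x-\mu)^2/(2\sigma^2)\big)$ and observe that the exponential factor lies in $(0,1]$, so $\varphi(x)\le\frac{1}{\sigma\sqrt{2\pi}}$, with equality at $x=\mu$. Nothing more is needed here.

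For the second inequality, I would first record that $\varphi$ is differentiable on $\R$ with $\varphi'(t)=-\frac{t-\mu}{\sigma^2}\varphi(t)$. By the mean value theorem, $|\varphi(x)-\varphi(y)|\le|x-y|\cdot\sup_{t\in\R}|\varphi'(t)|$, so it suffices to show $|\varphi'(t)|\le\frac{1}{\sigma^2\sqrt{2\pi e}}$ for every $t$. Substituting $u=(t-\mu)/\sigma$ gives $|\varphi'(t)|=\frac{1}{\sigma^2\sqrt{2\pi}}\,|u|e^{-u^2/2}$, and by symmetry it is enough to maximise $g(u)=ue^{-u^2/2}$ on $[0,\infty)$. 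Since $g'(u)=(1-u^2)e^{-u^2/2}$ vanishes only at $u=1$ on this range, the maximum is $g(1)=e^{-1/2}$, whence $|\varphi'(t)|\le\frac{1}{\sigma^2\sqrt{2\pi}}\cdot e^{-1/2}=\frac{1}{\sigma^2\sqrt{2\pi e}}$, as claimed.

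There is essentially no obstacle in this argument; the only point requiring (minor) care is handling the absolute value in $|u|e^{-u^2/2}$, which is dispatched by the symmetry reduction above. The whole proof is a couple of lines of elementary calculus.
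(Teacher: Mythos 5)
Your proof is correct and matches the paper's argument essentially verbatim: both bound the first quantity by observing the exponential factor is at most $1$, and both establish the Lipschitz bound by computing $\sup_t|\varphi'(t)|$ via the maximum of $u\mapsto|u|e^{-u^2/2}$ and then invoking the mean value theorem.
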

\begin{proof}
The first statement is immediate as $\varphi(x)=e^{-(x-\mu)^2/2\sigma^2}/(\sigma\sqrt{2\pi})$. For the latter, again note
that the maximum of $x\mapsto |xe^{-x^2/2}|$ is $e^{-1/2}$ and occurs at $x=\pm1$ and hence
\[
 |\varphi'(x)|=\left|-\frac{1}{\sigma^2\sqrt{2\pi}}\frac{x-\mu}{\sigma}
 \exp\Big(-\Big(\frac{x-\mu}{\sigma}\Big)^2/2\Big)\right|
 \le\frac{1}{\sigma^2\sqrt{2\pi e}},
\]
with the maximum occurring at $(x-\mu)/\sigma=\pm1$. The statement then follows from the mean value theorem.
\end{proof}

\end{document}